\title[Convergence for the mean-field dispersion process]{Quantitative convergence guarantees for the mean-field dispersion process}
\author{Fei Cao}
\address{Department of Mathematics and Statistics, University of Massachusetts Amherst, 710 N Pleasant St, Amherst, MA 01003, USA}
\email{fcao@umass.edu}
\author{Jincheng Yang}
\address{Department of Applied Mathematics and Statistics, Johns Hopkins University, 3400 N Charles St, Baltimore, MD 21218, USA}
\email{jincheng@jhu.edu}
\date{\today}
\keywords{dispersion of particles; agent-based model; interacting particle systems; Fokker--Planck equations; econophysics}
\subjclass[2020]{82C22, 82C31, 35Q91, 91B80}
\thanks{\textit{Acknowledgments}. It is a great pleasure to express our gratitude to Sebastien Motsch for generating Figure \ref{fig:illustration_model} that illustrates the dispersion model under investigation. The second author would like to thank Yangyang Li and Minh Pham for insightful discussion on integral equations. The second author was partially supported by the US National Science Foundation under Grant No.~DMS-1926686.}
\newtheorem{theorem}{Theorem}
\newtheorem{proposition}{Proposition}[section]
\newtheorem{corollary}[proposition]{Corollary}
\newtheorem{lemma}[proposition]{Lemma}
\newtheorem{remark}[proposition]{Remark}
\DeclareMathOperator{\expo}{e}
\DeclareMathOperator*{\argmin}{\arg\!\min}
\newcommand{\overbar}[1]{\mkern 1.5mu\overline{\mkern-1.5mu#1\mkern-1.5mu}\mkern 1.5mu}
\newcommand{\dd}{\mathop{\kern0pt\mathrm{d}}\!{}}
\newcommand{\bp}{{\bf p}}
\newcommand{\bq}{{\bf q}}
\newcommand{\pb}{\overbar p}
\newcommand{\bpb}{\overbar \bp}
\newcommand{\cE}{\mathcal E}
\newcommand{\cO}{\mathcal O}
\newcommand{\cS}{\mathcal S}
\begin{document}

\begin{abstract}
We study the discrete Fokker--Planck equation associated with the mean-field dynamics of a particle system called the dispersion process. For different regimes of the average number of particles per site (denoted by $\mu > 0$), we establish various quantitative long-time convergence guarantees toward the global equilibrium (depending on the sign of $\mu - 1$), which is also confirmed by numerical simulations. The main novelty/contribution of this manuscript lies in the careful and tricky analysis of a nonlinear Volterra-type integral equation satisfied by a key auxiliary function.
\end{abstract}
	
\maketitle
\tableofcontents

\section{Introduction}

In this paper, we study the following one-dimensional discrete Fokker--Planck equation which models the evolution of the particle distribution $\bp (t) = \{p _n (t)\} _{n = 0} ^\infty$ associated with the mean-field dispersion process for $t > 0$ (see section \ref{subsec:1.1} for more details on the origin of the following dynamical system):
\begin{align}
	\label{eq:law_limit}
	\bp' (t) &= \mathcal D _+ [\{n \mathbf1 _{\{n \ge 2\}}\} _n \bp (t)] - \mathcal D _- [a (t) \bp (t)], & a (t) &= \sum _{n = 2} ^\infty n p _n (t).
\end{align}
Here $\bp: [0, \infty) \to \cS$ represents the probability mass function of the number of particles in a site, where
\begin{align*}
	\cS = \left\{
		\bq \in [0, 1] ^\mathbb N: \bq = \{q _n\} _{n = 0} ^\infty \text{ with } \sum _{n = 0} ^\infty q _n = 1
	\right\}
\end{align*}
is the set of probability distributions on $\mathbb N$. $\mathcal D _+/\mathcal D _-$ are the forward/backward difference operators, and $a: [0, \infty) \to \mathbb R$ represents the number of active particles per site. This is an infinite-dimensional dynamical system $\bp' (t) = \mathcal F [\bp (t)]$ where the generator $\mathcal F: \cS \to \mathbb R ^\mathbb N$ can be written down in components as the following:
\begin{align}
	\label{eq:L}
	\mathcal F [\bq] _n = \begin{cases}
	 -\left(\sum _{k \ge 2} k q _k \right) q _0 & n = 0,\\
	 2 q _2 - \left(\sum _{k \ge 2} k q _k \right) (q _1 - q _0) & n = 1, \\
	 (n + 1) q _{n + 1} - n q _n - \left(\sum _{k \ge 2} k q _k \right) (q _n - q _{n - 1}) & n \ge 2.
	\end{cases}
\end{align}

Our main result is summarized in the following theorem. Below, $W _0$ denotes the principal branch of the Lambert $W$ function \cite{lambert_Observationes_1758} (defined to be the unique solution $y = W_0(x)$ to the relation $y \expo ^{-y} = x$ whenever $x \geq -\expo ^{-1}$), and we employ the Japanese bracket notation $\langle t \rangle \coloneqq \sqrt{1 + t ^2}$.

\begin{theorem}
	\label{thm:main}
	Let $\bp ^0 = \{p ^0 _n\} _{n = 0} ^\infty \in \cS$, and denote its mean by $\mu = \sum _{n = 0} ^\infty n p ^0 _n$. If $\mu \le 1$, we assume in addition that its second moment is finite. Then there exists a positive constant $C$ depending only on the second moment of $\bp ^0$ when $\mu \le 1$, or on the first moment $\mu$ when $\mu > 1$, such that the solution $\bp (t)$ to \eqref{eq:law_limit} with initial value $\bp (0) = \bp ^0$ converges strongly as $t \to +\infty$ at the following rate.
	\begin{enumerate}[\upshape i)]
		\item If $0 < \mu < 1$, then
		\begin{align*}
			\left\lVert \bp (t) - \bp ^* \right\rVert _{\ell ^1} \le C \expo^{-2(1-\mu)t}.
		\end{align*}

		\item If $\mu = 1$, then
		\begin{align*}
			\left\lVert \bp (t) - \bp ^* \right\rVert _{\ell ^1} \le C\,t^{-1}.
		\end{align*}
	\end{enumerate}
	For $\mu > 1$, denote $\nu \coloneqq \mu + W _0 (-\mu \expo ^{-\mu})$. Then
	\begin{enumerate}[\upshape i)]
		\setcounter{enumi}{2}
		\item If $1 < \mu < \frac\expo{\expo - 1}$, then
		\begin{align*}
			\left\lVert \bp (t) - \bpb \right\rVert _{\ell ^1} \le C  \langle t \rangle ^\frac12 \expo ^{-\nu t}.
		\end{align*}

		\item If $\mu \ge \frac\expo{\expo - 1}$, then there exists $K > 0$ depending only on $\mu$ such that
		\begin{align*}
			\left\lVert \bp (t) - \bpb \right\rVert _{\ell ^1} \le C \langle t \rangle ^{K + \frac12} \expo ^{-t}.
		\end{align*}
	\end{enumerate}
	The equilibrium is given by the following:
	\begin{align}
		\label{eq:bernoulli}
		\bp ^* &= \{1 - \mu, \mu, 0, 0, \dots\}, \\
		\label{eq:zero_truncated_poisson}
		\bpb &= \{0, \pb _1, \pb _2, \pb _3, \dots\} \qquad \text{ with } \qquad
		\pb _n = \frac{\nu ^n}{n!} \cdot \frac1{\expo ^\nu - 1}, \qquad n \ge 1.
	\end{align}
\end{theorem}

The proof of this theorem is divided into Corollary \ref{cor:1} in Section \ref{sec:sec3} for $\mu \le 1$ and Corollary \ref{cor:2} in Section \ref{sec:sec4} for $\mu > 1$. We also emphasize that searching for sharp or optimal decay rates in the aforementioned results is out of the scope of this manuscript.

\subsection{Derivation of the model}
\label{subsec:1.1}

Equation \eqref{eq:law_limit} can be derived by resorting to a similar approach as taken in \cite{cao_sticky_2024} as the mean-field limit of the dispersion process, which describes the evolution of a stochastic particle system on a graph. The dispersion process is a continuous-time Markov process where $M$ indistinguishable particles are distributed and moving across the vertices of a complete graph with $N$ vertices, known as ``sites''. At any given time, each particle that shares a common site with at least one other particle is called ``active'', and it will move to a neighbor site, uniformly randomly chosen, at a prescribed fixed rate (e.g. by Poisson signal). Particles that reside alone on a site are called ``inactive'', and they will stay there until activated by an incoming particle. This scheme is illustrated in Figure \ref{fig:illustration_model}. The process will terminate when there are no remaining active particles. This model was first introduced in \cite{cooper_dispersion_2018} and further studied in \cite{de_dispersion_2023,frieze_note_2018,shang_longest_2020}, where the random time of termination is analyzed using probabilistic tools.

\begin{figure}[!htb]
	\centering
	\includegraphics[width=.67\textwidth]{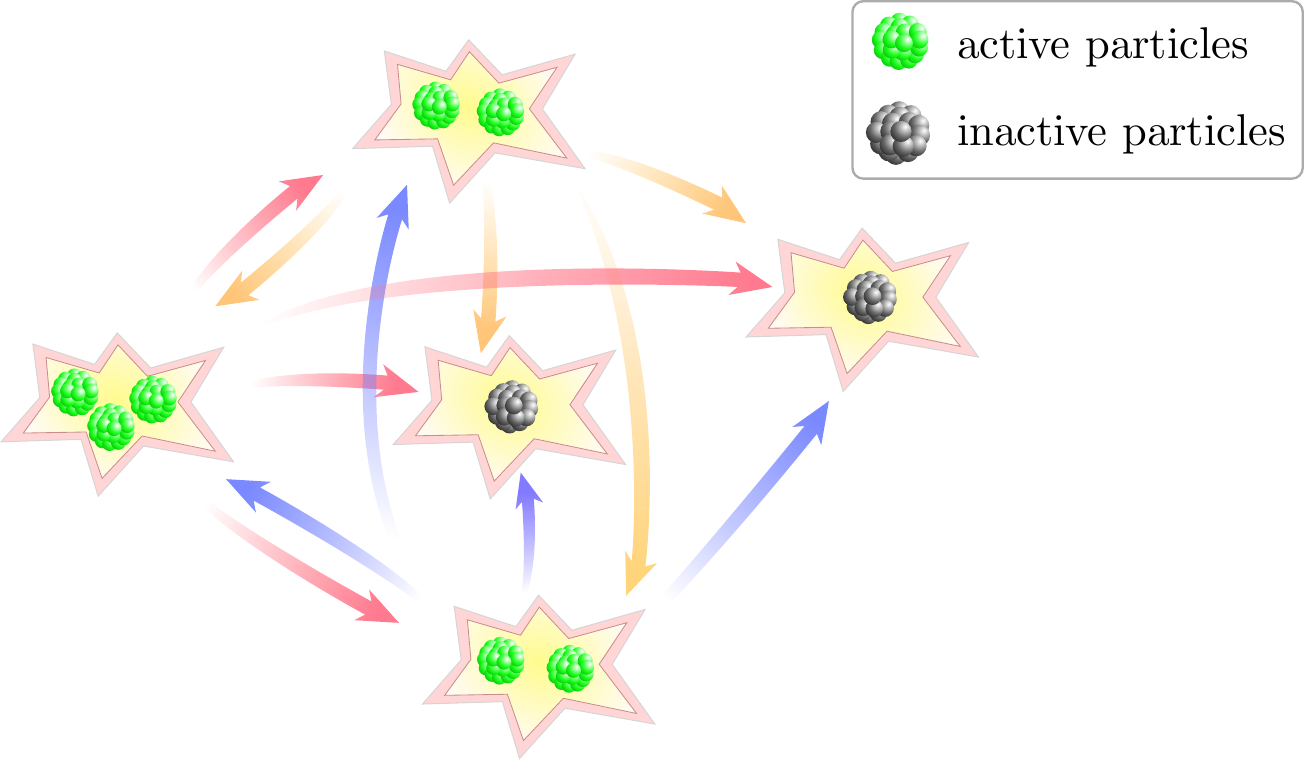}
	\caption{Illustration of the dispersion dynamics on a complete graph with $N = 5$ nodes/sites and $M = 9$ particles. Particles which share a common site will be ``active'' and move across sites.}
	\label{fig:illustration_model}
\end{figure}

The evolution of the particle distribution on the graph depends heavily on the ratio $\mu = M / N$, which represents the average number of particles per site. In the underpopulated scenario $\mu \le 1$, the process will terminate almost surely when all particles become inactive, but in the overpopulated case $\mu > 1$ the process will continue indefinitely.

Let us now analyze this process from the sites' perspective. Denote the number of particles on site $i$ at time $t$ by $X _i (t)$. Particle dynamics can be represented by the following update rule: For each pair $(i, j)$ of sites, if $X _i > 1$, then one particle will move from site $i$ to site $j$ at a rate proportional to $X _i$:
\begin{align}
	\label{model:dispersion}
	(X _i, X _j) \xlongrightarrow{\text{rate } X _i} (X _i - 1, X _j + 1) \qquad \text{ if } X _i > 1.
\end{align}
The rate at which site $i$ loses a particle is proportional to $X _i$, because every particle here is active. As a consequence, the rate at which any site gains a particle is proportional to the total number of active particles per site $\frac1N \sum _{i = 1} ^N X _i \mathbf1 _{\{X _i \ge 2\}}$. By taking the large population limit as $N, M \to +\infty$, we formally obtain the Fokker--Planck equation \eqref{eq:law_limit}. We will keep the scaling law $M / N \to \mu \in (0, +\infty)$, so that the law of $X _i$ will converge.

The continuous-time dispersion model we have described is a standard interacting particle system and is amenable to mean-field type analysis under the large population limit $N \to \infty$, which is detailed in a recent work \cite{cao_sticky_2024} on a related model. The rigorous justification of this transition from the stochastic interacting agents systems \eqref{model:dispersion} into the associated mean-field ODE system \eqref{eq:law_limit} requires the proof of the \emph{propagation of chaos} property \cite{sznitman_topics_1991}, which is beyond the scope of the present manuscript.

\subsection{Review on related models}

Here we detail several related models in the literature. One closely related model is the \emph{sticky dispersion process}, which is a variant of the dispersion process where each inactive particle stays permanently inactive. This model was studied in \cite{cao_sticky_2024}, where the mean-field limit was derived and the convergence to the equilibrium was established. The update rule is as follows:
\begin{align}
	(X _i, X _j) \xlongrightarrow{\text{rate } X _i - 1} (X _i - 1, X _j + 1) \qquad \text{ if } X_i > 1.
\end{align}
In the mean-field limit, the long-time equilibrium distribution is also the Bernoulli distribution for $\mu \le 1$, and for $\mu > 1$ the equilibrium is the modified Poisson distribution \cite{cao_sticky_2024}. The analysis is slightly simpler than the standard dispersion process, because the number of active particles $a (t)$ can be explicitly solved. Although the dispersion process investigated in this work resembles (at the microscopic level) to its sticky version studied in the companion work \cite{cao_sticky_2024} and the final asymptotic behavior might appear similar in one sense or another, the large time analysis of the mean-field ODE system \eqref{eq:law_limit} is significantly harder in the region where $\mu > 1$ compared to its sticky counterpart, largely due to a much tricker nonlinearity caused by $a(t) = \sum_{n\geq 2} n\,p_n$ in the equation \eqref{eq:law_limit} (this nonlinearity is more benign in the mean-field sticky dispersion model). For these reasons, section \ref{sec:sec4} constitutes the most technical part of this manuscript and requires significant novel ideas from the theory of PDEs.

A related particle system is the Becker--D\"oring model \cite{ball_becker-doring_1986,ball_discrete_1990,ball_asymptotic_1988,jabin_rate_2003,niethammer_evolution_2003,niethammer_scaling_2004}, which describes the evolution of the size distribution of clusters in a nucleation process. Each cluster may absorb or emit one particle at a rate depending on its size. Let $X _i$ be the number of particles in the $i$-th cluster, then the update rule is
\begin{align*}
	(X _i, X _j) \xlongrightarrow{\text{rate } a _{X _i}} (X _i + 1, X _j - 1) \qquad \text{ if } X _i > 1, X _j = 1 \\
	(X _i, X _j) \xlongrightarrow{\text{rate } b _{X _i}} (X _i - 1, X _j + 1) \qquad \text{ if } X _i > 1, X _j = 0.
\end{align*}
Here $(i, j)$ should be sampled from the set of all pairs of non-empty clusters and the first empty cluster, and the cluster-dependent jump rates $a _n, b _n \ge 0$ depend on the specific set-up of the model which we omit to describe at this moment. We will elaborate on the Becker--D\"oring model in section \ref{sec:sec2}.

Another example is the poor-biased exchange model \cite{cao_derivation_2021,cao_uniform_2024} studied in econophysics. It can be viewed as a variant of the dispersion process where every particle is always active, disregarding the number of particles on a site. The update rule is as follows:
\begin{align}
	(X _i, X _j) \xlongrightarrow{\text{rate } X _i} (X _i - 1, X _j + 1) \qquad \text{ if } X _i > 0.
\end{align}
In this and many other related econophysics models \cite{boghosian_h_2015,cao_biased_2023,cao_binomial_2022,cao_uncovering_2022,chakraborti_statistical_2000,chatterjee_pareto_2004,heinsalu_kinetic_2014,lanchier_rigorous_2017,lanchier_rigorous_2019,matthes_steady_2008}, particles are currencies which are exchanged by agents represented by nodes. Each agent passes a currency to another agent at a rate proportional to the amount of currency they have. Our model can be viewed as a variant of the poor-biased exchange model with a wealth-flooring policy, where agents with less than 2 currencies are not allowed to exchange.

We refer interested readers to \cite{chakraborti_econophysics_2011,dragulescu_statistical_2000,during_kinetic_2008,kutner_econophysics_2019,pereira_econophysics_2017,savoiu_econophysics_2013} for more information on econophysics. The aforementioned propogagion of chaos property was proved for many econophysics models \cite{cao_derivation_2021,cao_entropy_2021,cao_explicit_2021,cao_interacting_2022,cao_uniform_2024,cortez_quantitative_2016}.

\begin{figure}[htbp]
	\centering
	\includegraphics[width=.5\textwidth]{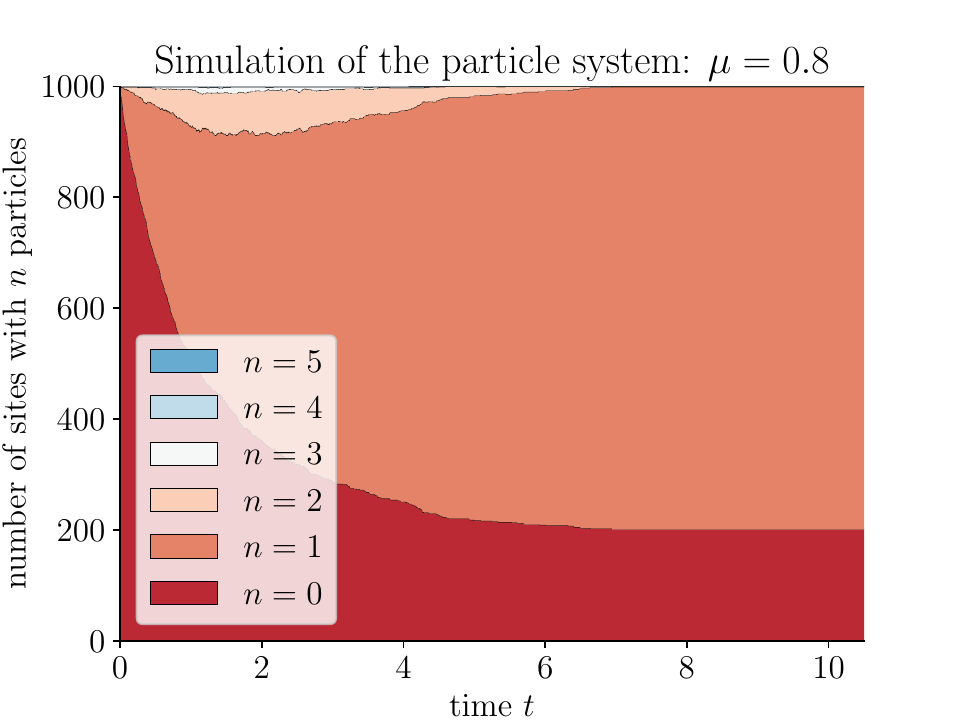}%
	\includegraphics[width=.5\textwidth]{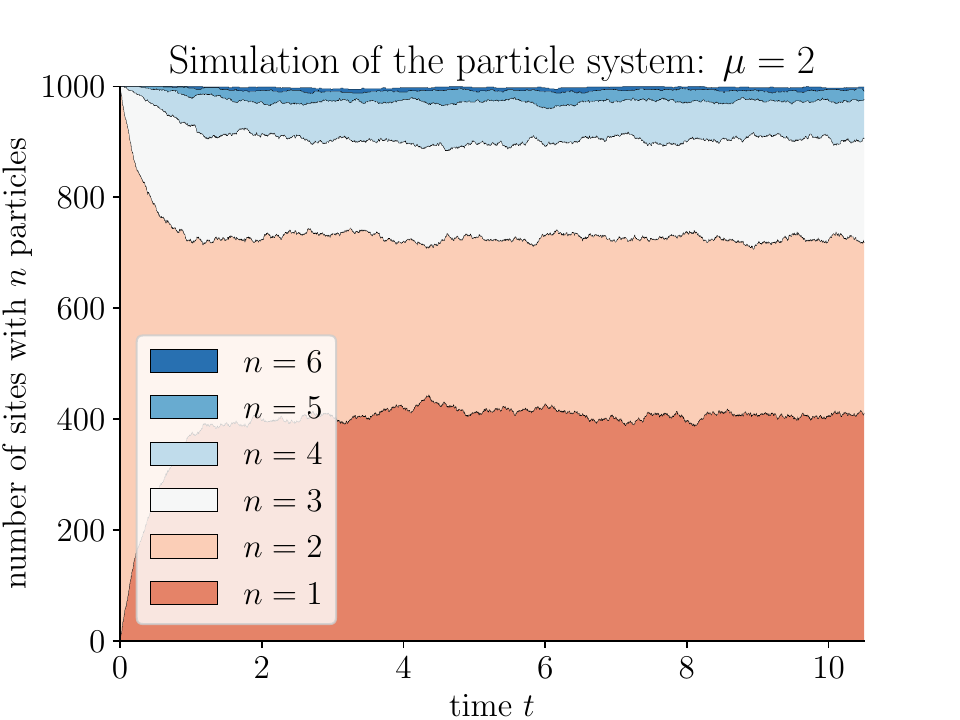}
	\caption{Stackplots of the agent-based simulation with $\mu = 0.8$ and $\mu = 2$ during the first $10$ units of time. At a given time $t$, the width of $n$-th layer represents the number of sites hosting $n$ particles, making a total of $N = 1,000$ sites. For $\mu = 0.8$, we initially put all particles at one single site; the distribution of particles converges to a Bernoulli distribution and no site hosts two or more particles (left). For $\mu = 2$, we put two particles in each site initially; each site hosts at least one particle and the distribution of particles stabilizes around a zero-truncated Poisson distribution after a few units of time (right).}
	\label{fig:agent_based_simulation}
\end{figure}

\subsection{Numerical simulations}

To verify the result, we perform two agent-based simulations on $N=1000$ sites, with $800$ and $2000$ particles respectively. We used two different initial values. For $M = 800$, all particles are initially placed at one single site; for $M = 2000$, they are evenly distributed on the graph at the beginning. We record the number of particles in each site $X _i$ over time. The distributions of $X _i$ in the first $10$ units of time are plotted in Figure \ref{fig:agent_based_simulation}.
We observe that for the underpopulated scenario $\mu = 0.8$, the distribution of the number of particles in a site converges to a Bernoulli distribution $\bp ^*$ defined in \eqref{eq:bernoulli} with mean $0.8$ after approximately 6.5 seconds (Figure \ref{fig:agent_based_simulation}-left). For the overpopulated case, it stabilizes near a zero-truncated Poisson distribution $\bpb$ with mean $2$ defined in \eqref{eq:zero_truncated_poisson} (Figure \ref{fig:agent_based_simulation}-right). To furthermore confirm the convergence, we continue the simulation for $2000$ time units and the bar plot of $X _i$'s is presented in Figure \ref{fig:agent_based_simulation_bar}.
\begin{figure}[htbp]
	\centering
	\includegraphics[width=.5\textwidth]{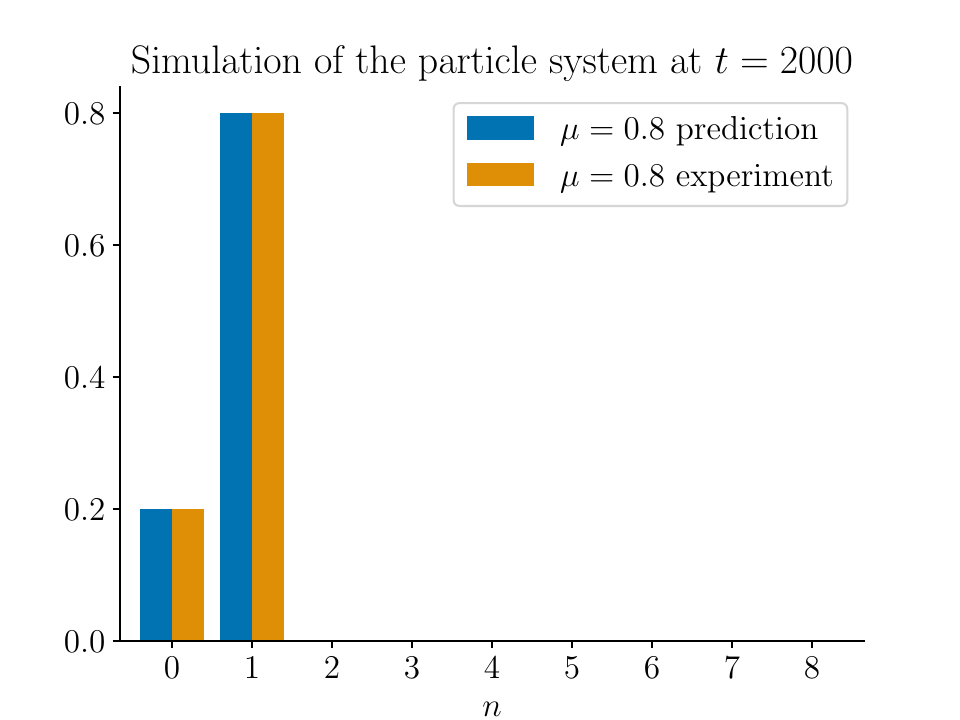}%
	\includegraphics[width=.5\textwidth]{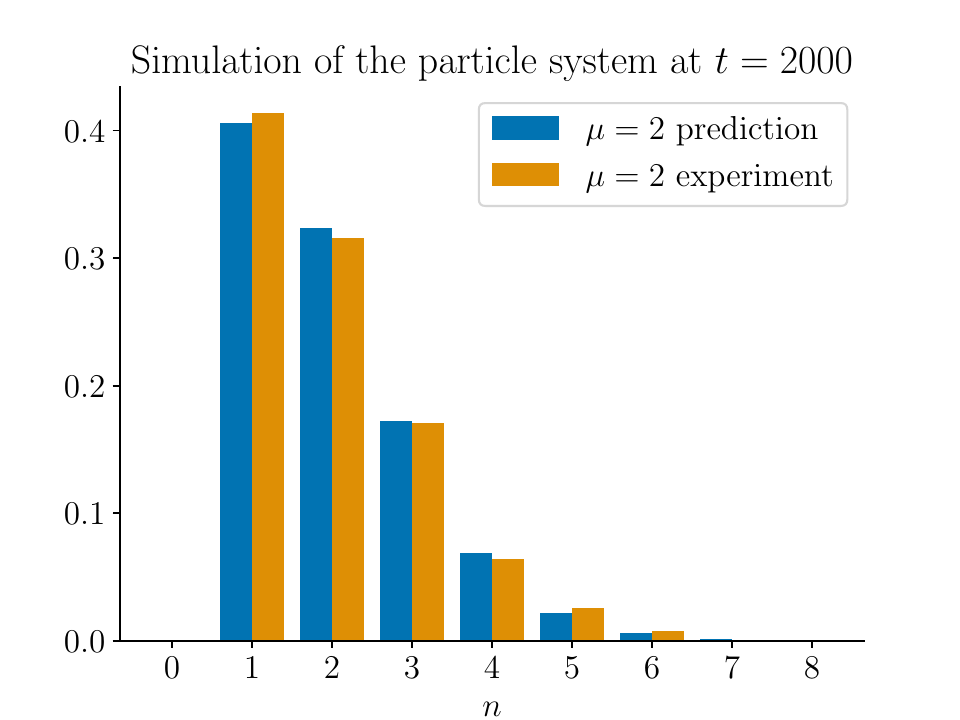}
	\caption{Distribution of particles for the dispersion model with $N = 1,000$ agents after $2,000$ units of time, using two different values of $\mu$. For $\mu = 0.8$, the final distribution coincides exactly with the Bernoulli distribution with mean $0.8$, where we put all the particles into a single site initially. For $\mu = 2$, the terminal distribution is well-approximated by a zero-truncated Poisson distribution \eqref{eq:zero_truncated_poisson}, where we put $X_i(0) = \mu$ for all $1\le i\le N$ initially.}
	\label{fig:agent_based_simulation_bar}
\end{figure}

Next, we numerically solve the infinite system \eqref{eq:L} by truncating the first $101$ components (i.e. $(p _0 (t),\dots, p _{100} (t))$). We again choose the parameters $\mu = 2$ and $\mu = 0.8$. For $\mu = 0.8$, we use $p _{100} (0)= \mu / 100$, $p _0 (0) = 1 - \mu / 100$, and $p _i (0) = 0$ on other components for initial condition. For $\mu = 2$ we use $p _2 (0) = 1$ and $p _i (0) = 0$ on other components. We apply the standard Runge--Kutta fourth-order scheme and the time step is set to $\Delta t = 0.01$. We plot the value of $p _n (t)$ as a stackplot in Figure \ref{fig:ODEsimulation}, and we observe in both cases convergence to equilibrium distributions. Corresponding results for agent-based simulation in Figure \ref{fig:agent_based_simulation} are plotted here in yellow for comparison. We find the numerical solution to the ODE system \eqref{eq:law_limit} is in good agreement with the agent-based simulation results, which suggests \eqref{eq:law_limit} is a valid mean-field limit.

\begin{figure}[htbp]
	\centering
	\includegraphics[width=0.5\textwidth]{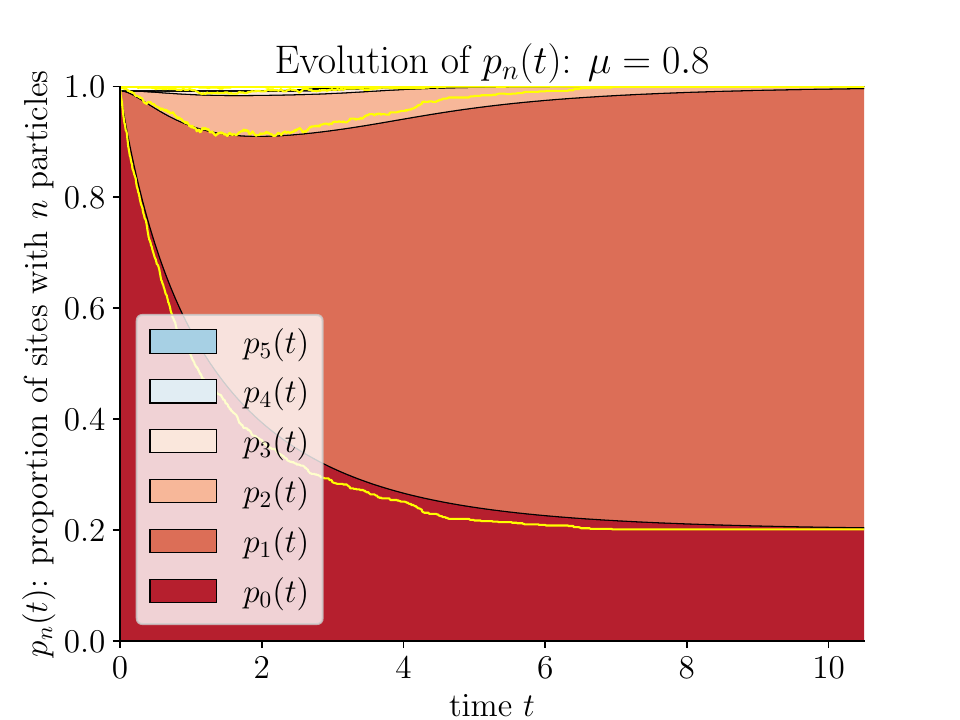}%
	\includegraphics[width=0.5\textwidth]{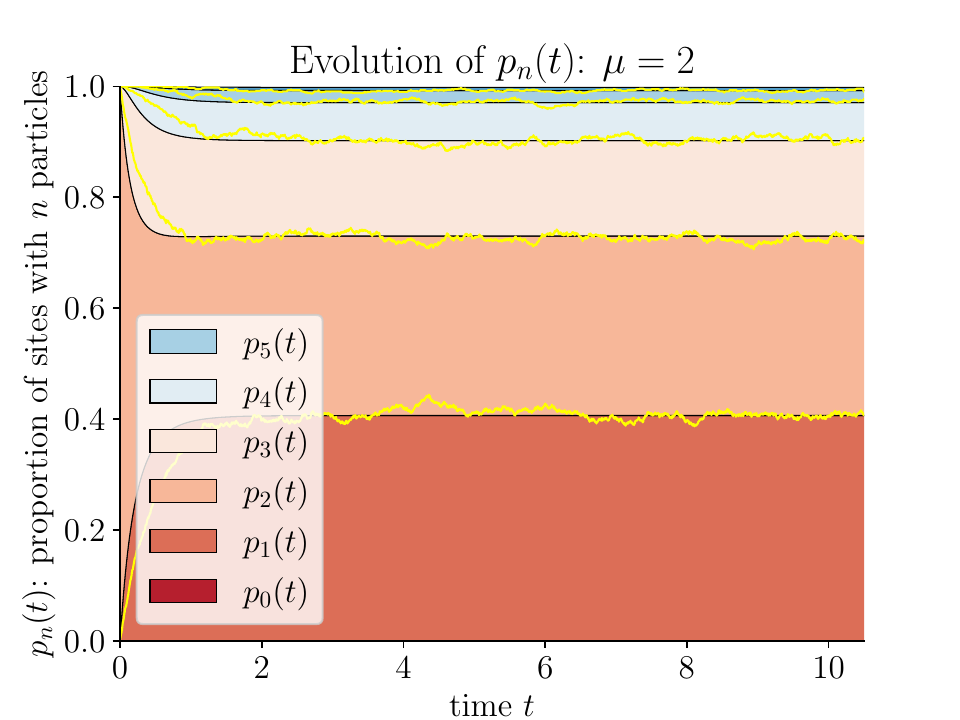}
	\caption{Stackplot of the numerical solution to the truncated ODE system $\{p _n (t)\} _{n = 0} ^{100}$ with $\mu = 0.8$ and $\mu = 2$ during the first 10 units of time. At a given time $t$, the width of $n$-th layer represents $p _n (t)$ which sum up to $1$. For $\mu = 0.8$ the distibution of particles converges to a Bernoulli distribution with mean $\mu$ (left). For $\mu = 2$, the distribution converges to the zero-truncated Poisson distribution with mean $\mu$ (right). The overlayed yellow lines represent the corresponding agent-based simulation results.}
	\label{fig:ODEsimulation}
\end{figure}

\subsection{Organization of the paper}

In section \ref{sec:sec2}, we show some basic properties of the system \eqref{eq:law_limit} and locate the equilibrium distribution. The form of the equilibrium depends on the parameter $\mu \in (0, \infty)$ which represents the average amount of particles per site initially. For the underpopulated case $\mu \in (0, 1]$, we prove the solution converges to the Bernoulli distribution $\bp ^*$ using a Lyapunov functional in section \ref{sec:sec3}. In section \ref{sec:sec4}, we analyze the overpopulated case $\mu > 1$ and show that the solution relaxes to the zero-truncated Poisson distribution $\bpb$ at an exponential rate. The proof is based on the analysis of the probability generating function (PGF) of the solution, which satisfies a transport equation. Finally, we draw a conclusion in section \ref{sec:sec5}.

\section{Elementary properties of the ODE system}
\label{sec:sec2}

After we achieved the transition from the interacting agents system \eqref{model:dispersion} to the deterministic nonlinear ODE system \eqref{eq:law_limit}, our main goal is to show convergence of solution of \eqref{eq:law_limit} to its (unique) equilibrium solution. We aim to describe some elementary properties of solutions of \eqref{eq:law_limit} in this section. As we have indicated in the introduction, the large time behavior of solutions to \eqref{eq:law_limit} depends critically on the range to which the parameter $\mu$ belongs. Before we dive into the detailed analysis of the system of nonlinear ODEs, we first establish some preliminary observations regarding solutions of \eqref{eq:law_limit}.

As we want $a (t)$ to be finite, we equip $\cS$ with the following weighted $\ell ^1$ norm:
\begin{align*}
	\left\lVert \bq \right\rVert _{\cS} := \sum _{n = 0} ^\infty \left(1 + n \right) |q _n|, \qquad \bq \in \mathbb R ^{\mathbb N}.
\end{align*}
Clearly $\left\lVert \bq \right\rVert _{\ell ^\infty} \le \left\lVert \bq \right\rVert _{\cS}$. It can be seen from \eqref{eq:L} that $\left\lVert \mathcal F [\bq] \right\rVert _{\ell ^\infty} \le C \left\lVert \bq \right\rVert _{\cS}$. Therefore, we say $\bp: [0, \infty) \to \mathcal S$ is a \textbf{(classical) solution} to the system \eqref{eq:law_limit} if 
\begin{align*}
	\bp \in C ([0, \infty); \cS) \cap C ^1 ([0, \infty); \ell ^\infty)	
\end{align*}
and it verifies $\bp ' (t) = \mathcal F [\bp (t)]$. 

\begin{lemma}
	\label{lem:1}
	If $\bp (t)$ is a solution to the system \eqref{eq:law_limit}, then
	\begin{align}
		\label{eq:conservation_mass_mean_value}
		\sum _{n = 0} ^\infty \mathcal F [\bp (t)] _n = 0 \quad \text{and} \quad \sum _{n = 0} ^\infty n \mathcal F [\bp (t)] _n =0.
	\end{align}
	In particular, the total probability mass and the average amount of particles per site are conserved.
\end{lemma}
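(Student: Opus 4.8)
The plan is to verify the two identities in \eqref{eq:conservation_mass_mean_value} by pure bookkeeping: split each term of $\mathcal{L}[{\bf p}]_n$ into the ``flux'' part carrying the factor $S := \sum_{k\ge 2} k\,p_k$ and the linear ``birth--death'' part, and check that, after an index shift, each part sums to zero (resp. cancels the other) when weighted by $1$ (resp. by $n$). The conservation statements then follow by integrating in $t$, using that \eqref{eq:conservation_mass_mean_value} says precisely $\frac{\dd}{\dd t}\sum_n p_n(t)=0$ and $\frac{\dd}{\dd t}\sum_n n\,p_n(t)=0$. Since $S\le\sum_n n\,p_n$ must be finite merely for $\mathcal{L}$ to be defined, the first moment is finite (and equal to $\mu$); I will moreover work with solutions whose second moment stays finite, consistent with the finite-variance hypothesis of the main theorem, so that every series below converges absolutely and all reindexings — and the termwise differentiation $\frac{\dd}{\dd t}\sum_n n^j p_n(t)=\sum_n n^j\,\mathcal{L}[{\bf p}(t)]_n$ for $j\in\{0,1\}$ — are legitimate.

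First I would treat $\sum_{n\ge0}\mathcal{L}[{\bf p}]_n$. The coefficient of $S$ is $-p_0+(p_0-p_1)+\sum_{n\ge2}(p_{n-1}-p_n)$, and the telescoping tail equals $p_1$, so the flux contribution vanishes; the remaining birth--death terms are $2p_2+\sum_{n\ge2}\bigl((n+1)p_{n+1}-n\,p_n\bigr)$, which reindexes to $2p_2-2p_2=0$. Hence $\sum_n\mathcal{L}[{\bf p}]_n=0$, which gives conservation of total mass, $\sum_n p_n(t)\equiv\sum_n p_n(0)=1$.

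Next I would treat $\sum_{n\ge0} n\,\mathcal{L}[{\bf p}]_n$ the same way. The coefficient of $S$ is $(p_0-p_1)+\sum_{n\ge2} n\,(p_{n-1}-p_n)$; reindexing $\sum_{n\ge2} n\,p_{n-1}=\sum_{m\ge1}(m+1)p_m$ collapses it to $\sum_{m\ge0}p_m=1$ (here one uses the mass identity just obtained), so the flux part contributes exactly $+S$. The birth--death terms $2p_2+\sum_{n\ge2} n\bigl((n+1)p_{n+1}-n\,p_n\bigr)$ reindex — this is where finiteness of $\sum_n n^2 p_n$ is used — to $-2p_2-\sum_{m\ge3} m\,p_m=-S$, after noting $\sum_{m\ge3} m\,p_m=S-2p_2$. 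The two contributions cancel, so $\sum_n n\,\mathcal{L}[{\bf p}]_n=0$, i.e. $\sum_n n\,p_n(t)\equiv\mu$ for all $t\ge0$.

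I expect the only real obstacle to be the summability/regularity bookkeeping rather than the algebra: one needs to know a priori that a solution retains finite, locally-in-$t$-bounded first and second moments, so that the series above converge absolutely and may be differentiated term by term (and so that no mass escapes to $n=\infty$ in the telescoping step). Granting the propagation of these moments — which is where the standing finite-variance assumption does its work — the rest is the elementary cancellation sketched above.
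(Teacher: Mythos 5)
Your computation is correct and is exactly the ``straightforward computation'' that the paper declares and then omits: both telescoping/reindexing identities check out, and your cancellation of the flux contribution $+S$ against the birth--death contribution $-S$ in the first-moment sum is right. Your added care about finite second moment (needed to split and reindex $\sum_{n\ge 2} n\,\bigl((n+1)p_{n+1}-n\,p_n\bigr)$ absolutely) is a reasonable and honest strengthening of the hypotheses, consistent with the finite-variance assumption used elsewhere in the paper.
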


The proof of Lemma \ref{lem:1} is based on straightforward computations and will be skipped. Thanks to these conservation relations, the solution $\bp (t)$ lives in the space of probability distributions on $\mathbb N$ with the prescribed mean value $\mu$, defined by
\begin{align}
	\label{eq:S _mu}
	\cS _\mu \coloneqq \left\{
	\bq \in [0, 1] ^\mathbb N: \bq = \{q _n\} _{n = 0} ^\infty \text{ with } \sum _{n = 0} ^\infty q _n = 1, \sum _{n = 0} ^\infty n q _n = \mu
	\right\}.
\end{align}
We can use Galerkin method (see \cite[section 8.6]{arbogast_2024}) to show existence of a classical solution. The idea is to show the solutions to the truncated finite-dimensional ODEs are uniformly bounded in $C ([0, T); \mathcal S)$, so a weak limit (say in $L ^2 (0, T; \ell ^2)$) can be extracted. One then verifies the weak limit is indeed a classical solution. As for uniqueness, we provide a proof in Appendix \ref{app:uniqueness} for interested readers. More importantly, if the initial value has mean $\mu$, then the number of active particles per site $a(t)$ is given by
\begin{align*}
	a (t) = \sum _{n = 2} ^\infty n p _n (t) = \mu - p _1 (t).
\end{align*}
The system \eqref{eq:law_limit} will be equivalent to the following system of nonlinear ODEs:
\begin{align}
	\label{eq:law_limit_repeat}
	\bp' (t) &= \mathcal D _+ [\{n \mathbf1 _{\{n \ge 2\}}\} _n \bp (t)] - \mathcal D _- [(\mu - p _1 (t)) \bp (t)].
\end{align}
Writing in components, we have $\bp' (t) = \mathcal F [\bp (t)]$ where the generator $\mathcal F: \cS _\mu \to \mathbb R ^\mathbb N$ is given by
\begin{align}
	\label{eq:L_rewrite}
	\mathcal F [\bq] _n = \begin{cases}
		-(\mu - q _1) q _0 & n = 0,\\
		2 q _2 - (\mu - q _1) (q _1 - q _0) & n=1, \\
		(n + 1) q _{n + 1} - n q _n - (\mu - q _1) (q _n - q _{n - 1}) & n \ge 2.
	\end{cases}
\end{align}

The Fokker--Planck type equation \eqref{eq:law_limit_repeat} admits a heuristic interpretation as a jump process with loss and gain, and we illustrate this perspective via Figure \ref{fig:illustration_rates} below. Recall that $a (t) = \mu - p _1 (t)$ is the number of active particles per site.
\begin{figure}[htb]
	\centering
	\includegraphics{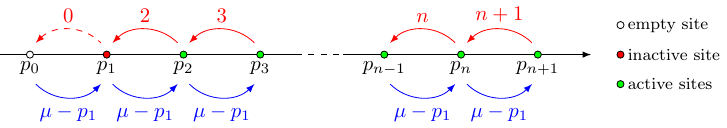}
	\caption{Schematic illustration of the Fokker--Planck type system of nonlinear ODEs \eqref{eq:law_limit_repeat} as a jump process with loss and gain.}
	\label{fig:illustration_rates}
\end{figure}

\begin{remark}
	\label{rem:becker-doring}
	The system \eqref{eq:law_limit_repeat} also resembles another system of nonlinear ODEs known as the Becker--D\"oring cluster equations. Following the convention, if $p _n$ represents the number of clusters with $n$ particles, $\mu$ is the total number of particles, then $\bp = \{p _n\} _{n = 1} ^\infty$ satisfies $\bp' (t) = \mathcal{BD} (\bp (t))$ where the generator $\mathcal{BD}: \cS _\mu \to \mathbb R ^{\mathbb N ^*}$ is
	\begin{align*}
		\mathcal {BD} [\bq] _n = \begin{cases}
			(b _{n + 1} q _{n + 1} - b _n q _n) - q _1 (a _n q _n - a _{n - 1} q _{n - 1}) & n \ge 2,\\
			b _2 q _2 - a _1 q _1 ^2 + \sum _{n = 1} ^\infty (b _{n + 1} q _{n + 1} - q _1 a _n q _n) & n = 1.
		\end{cases}
	\end{align*}
	Here ${\bf a} = \{a _n\} _{n = 1} ^\infty$, ${\bf b} = \{b _n\} _{n = 1} ^\infty$ are given positive constants, with $b _1 = 0$. We can also write the Becker--D\"oring equations as a discrete Fokker--Planck type equation:
	\begin{align*}
		\bp' (t) &= \mathcal D _+ [{\bf b} \bp (t)] - \mathcal D _- [p _1 (t) {\bf a} \bp (t)] + {\bf f} (t), \\
		{\bf f} (t) &= (({\bf b} - p _1 (t) {\bf a}) \cdot \bp, 0, 0, \dots).
	\end{align*}
Here $\bf b \bp$ is the pointwise product and ``$\cdot$'' denotes the inner product. Compared with \eqref{eq:law_limit_repeat}, the generator of Becker--D\"oring equations $\mathcal {BD}$ is also a second-order difference operator; upward drift ${\bf b}$ is constant, whereas downward drift $p _1 {\bf a}$ depends only on $p _1$. However, the total mass here is not conserved due to a quadratic reaction term $\bf f$, which makes Becker--D\"oring more difficult to analyze. Nevertheless, both $\mathcal F$ and $\mathcal{BD}$ are linear in $(q _2, q _3, \dots)$. We refer interested readers to \cite{ball_becker-doring_1986,ball_asymptotic_1988,ball_discrete_1990,jabin_rate_2003,niethammer_evolution_2003,niethammer_scaling_2004} and references therein.
\end{remark}

Next, we identify the unique equilibrium solution associated with the system \eqref{eq:law_limit_repeat}.

\begin{proposition}
	\label{prop:equilibrium}
	The unique equilibrium solution of \eqref{eq:law_limit_repeat} in the space $\cS _\mu$, for $\mu \in (0, 1]$, is given by $\bf p ^*$ defined in \eqref{eq:bernoulli}. The unique equilibrium solution of \eqref{eq:law_limit_repeat} in the space $\cS _\mu$, when $\mu \in (1,\infty)$, is provided by $\bpb$ defined in \eqref{eq:zero_truncated_poisson}.
\end{proposition}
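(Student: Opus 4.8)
The plan is to characterize the stationary solutions directly, by imposing $\mathcal L[{\bf p}]_n = 0$ for every $n \ge 0$ and exploiting the recursive (second-order difference) structure of $\mathcal L$ in \eqref{eq:L_rewrite}. Write $\lambda \coloneqq \mu - p_1$; since $p_1 = 1\cdot p_1 \le \sum_{n\ge 1} n\,p_n = \mu$ for any ${\bf p}\in\mathcal S_\mu$, one always has $\lambda \ge 0$. The $n=0$ equation reads $\lambda\,p_0 = 0$, which splits the analysis into the two exhaustive cases $\lambda = 0$ (i.e.\ $p_1 = \mu$) and $p_0 = 0$.

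In the first case, the $n=1$ equation forces $2p_2 = 0$, and feeding $p_2 = 0$ into the relations $(n+1)p_{n+1} = n\,p_n$ for $n\ge 2$ (what $\mathcal L[{\bf p}]_n = 0$ becomes when $\lambda = 0$) gives, by an immediate induction, $p_n = 0$ for all $n\ge 2$; hence $p_0 = 1-\mu$, which is admissible exactly when $\mu \le 1$ and reproduces ${\bf p}^*$ of \eqref{eq:Bernoulli} (reducing to $\delta_1$ at $\mu=1$). In the second case, $p_0 = 0$ and (generically) $\lambda > 0$: using $2p_2 = \lambda p_1$ (the $n=1$ equation) and $(n+1)p_{n+1} = (n+\lambda)p_n - \lambda p_{n-1}$ for $n\ge 2$, I would prove by induction that $p_n = \dfrac{\lambda^{n-1}p_1}{n!}$ for every $n\ge 1$. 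Summing the resulting series, the normalization $\sum_{n\ge1}p_n = 1$ forces $p_1 = \lambda/(\expo^\lambda - 1)$, and substituting this into $\lambda = \mu - p_1$ yields the scalar equation $\mu = \dfrac{\lambda\,\expo^\lambda}{\expo^\lambda - 1}$; a direct check shows the mean constraint $\sum n\,p_n = \mu$ then holds automatically, so membership in $\mathcal S_\mu$ reduces to this single transcendental equation.

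It then remains to study $g(\lambda) \coloneqq \dfrac{\lambda\,\expo^\lambda}{\expo^\lambda - 1} = \dfrac{\lambda}{1-\expo^{-\lambda}}$ on $(0,\infty)$: its derivative has the sign of $1 - (1+\lambda)\expo^{-\lambda}$, which is positive for $\lambda>0$, so $g$ is a strictly increasing bijection from $(0,\infty)$ onto $(1,\infty)$ with $g(0^+)=1$ and $g(+\infty)=+\infty$. Consequently, for $\mu\le1$ this second case is vacuous (confirming that ${\bf p}^*$ is the unique equilibrium), while for $\mu>1$ there is exactly one root $\lambda = \nu \in (0,\infty)$, with $p_1 = \mu-\nu\in(0,1)$ — and note that $\mu>1$ is also precisely the regime in which the first-case Bernoulli candidate fails, since there $p_0 = 1-\mu < 0$. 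To match the closed form \eqref{eq:modified_Poisson}, I would set $u\coloneqq \nu-\mu$; the relation $\mu-\nu = \mu\,\expo^{-\nu}$ rearranges to $u\,\expo^{u} = -\mu\,\expo^{-\mu}$, and since $-\mu\,\expo^{-\mu}\in(-1/\expo,0)$ for $\mu>1$, the two real Lambert branches give $u = -\mu$ (the spurious root $\nu=0$, via $W_{-1}$) and $u = W_0(-\mu\,\expo^{-\mu})\in(-1,0)$; the latter is the one with $\nu>0$, giving $\nu = \mu + W_0(-\mu\,\expo^{-\mu})\in(\mu-1,\mu)$ and $p_n = \dfrac{\nu^{n-1}p_1}{n!} = \dfrac{\nu^n}{n!\,(\expo^\nu - 1)}$, i.e.\ exactly $\overbar{\bf p}$.

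The routine ingredients are the two inductions and the monotonicity of $g$; the only points requiring care are keeping the case split genuinely exhaustive (in particular the overlap $p_0=0$ with $\lambda=0$, which occurs only at $\mu=1$ and there consistently yields $\delta_1$) and selecting the correct Lambert branch. I do not anticipate a serious obstacle: existence is obtained constructively by reversing the computation, and uniqueness follows from the rigidity of the recursion together with the injectivity of $g$.
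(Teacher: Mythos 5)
Your proof is correct and takes essentially the same route as the paper's: impose $\mathcal L[{\bf p}]_n=0$, split on the $n=0$ equation $(\mu-p_1)\,p_0=0$, solve the resulting recursion to get the Bernoulli and truncated-Poisson candidates, and fix the parameter via normalization and the principal branch of the Lambert $W$ function. If anything you are slightly more thorough on uniqueness, since you explicitly show the Poisson branch is vacuous for $\mu\le 1$ (via the monotonicity of $g(\lambda)=\lambda/(1-\expo^{-\lambda})$) and that the mean constraint is automatic, points the paper leaves implicit.
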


\begin{proof}
	To find equilibrium point of the system \eqref{eq:law_limit_repeat}, we need to solve $\mathcal F (\bq) = 0$, which reduces to the following by induction:
	\begin{align}
		\label{eq:identity-1}
		(\mu - q _1) q _0 &= 0, \\
		\label{eq:identity-2}
		(\mu - q _1) q _n &= (n + 1) q _{n + 1} \qquad \text{for } n \ge 1.
	\end{align}
	On the one hand, if $\mu = q _1 \le 1$, then by \eqref{eq:identity-2} $q _n = 0$ for all $n \ge 2$, and we deduce that the unique equilibrium solution is $\bq = \bp ^*$ with
	\begin{align*}
		p ^* _0 &= 1 - \mu, &
		p ^* _1 &= \mu, &
		p^*_n &= 0 \qquad \text{for } n \ge 2.
	\end{align*}
	Note that in the case $\mu = 1$ the Bernoulli distribution \eqref{eq:bernoulli} degenerates to the Dirac delta distribution centered at $1$.
	On the other hand, for $\mu > 1 \ge p _1$, we deduce from \eqref{eq:identity-1} that $p _0 = 0$, and the unique equilibrium distribution is $\bq = \bpb$ with
	\begin{align*}
		\pb _0 &= 0, &
		\pb_n &= \frac{(\mu - \pb _1) ^{n - 1}}{n!} \pb _1 \qquad \text{for } n \ge 1
	\end{align*}
	where $\pb _1 > 0$ is chosen such that $\bpb \in \cS _\mu$. Since $\sum _{n = 0} ^\infty \pb _n = 1$, we deduce that $(\mu - \pb _1) / \pb _1 = \sum _{n = 1} ^\infty (\mu - \pb _1) ^n / n! = \expo ^{\mu - \pb _1} - 1$, thus $\pb_1 \expo^{-\pb _1} = \mu \expo ^{-\mu}$, from which we deduce $\pb _1 = - W _0 (-\mu \expo ^{-\mu})$ because $\pb _1 < \mu$. We finish the proof by introducing a new constant $\nu = \mu - \bar p _1$.
\end{proof}

\begin{remark}
	The zero-truncated Poisson distribution $\bpb$ defined in \eqref{eq:zero_truncated_poisson} admits a simple interpretation in terms of random variables. Indeed, if $X \sim \operatorname{Poisson} (\nu)$, then the distribution of $X$ conditioned on $X \ge 1$ obeys the zero-truncated Poisson distribution, whose law is given by $\bpb$.
\end{remark}

\section{Underpopulated case: Convergence to the Bernoulli distribution}
\label{sec:sec3}

To justify the large time convergence of solutions of the system \eqref{eq:law_limit_repeat} when $\mu \in (0, 1]$, we employ a suitable Lyapunov functional associated to the dynamics \eqref{eq:law_limit_repeat}. For this purpose, we define the following energy functional:
\begin{align}
	\label{eq:energy}
	\cE [\bq] \coloneqq \sum _{n \ge 0} n ^2 q _n - \mu \qquad \bq \in \cS _\mu,
\end{align}
which is just a shifted version of the second (raw) moment of the distribution $\bq$. We first start with an elementary variational characterization of the Bernoulli distribution $\bp ^*$.

\begin{lemma}
	\label{lem:Bernoulli_characterization}
	For each $\mu \in (0, 1]$, the Bernoulli distribution $\bp^*$ with parameter $\mu$ satisfies
	\begin{align}
		\label{eq:bernoulli_characterization}
		\bp ^* = \argmin _{\bq \in \cS _\mu} \sum _{n \ge 0} n ^2 q _n.
	\end{align}
	Consequently, $\cE [\bq] \ge 0$ for all $\bq \in \cS _\mu$ and the equality holds if and only if $\bq = \bp ^*$.
\end{lemma}

\begin{proof}
	For $\bq \in \cS _\mu$, we have $\sum _{n \ge 0} n q _n = \mu$ and thus $\sum _{n \ge 1} n ^2 q _n \ge \sum _{n \ge 1} n q _n = \mu$, in which the inequality will become an equality if and only if $q _n = 0$ for all $n \ge 2$. This finishes the proof of Lemma \ref{lem:Bernoulli_characterization}.
\end{proof}

We now prove the following quantitative convergence result for the dissipation of $\cE [\bp (t)]$ along solutions to the system of nonlinear ODEs \eqref{eq:law_limit_repeat} when $\mu \in (0, 1]$.

\begin{theorem}
	\label{thm:1}
	Assume that $\bp (t)$ is the classical solution to the system \eqref{eq:law_limit_repeat} with $\bp (0) \in \cS _\mu$ and $\mu \in (0, 1]$, then for all $t\ge 0$ we have
	\begin{align}
		\label{eq:energy_decay_mu<1}
		\cE [\bp (t)] \le \cE [\bp (0)] \expo ^{-2 (1 - \mu) t} \qquad \text{ when } \mu < 1,
	\end{align}
	and
	\begin{align}
		\label{eq:energy_decay_mu=1}
		\cE [\bp (t)] \le \cE [\bp (0)]\expo^{-2t} + \frac{4}{t+2/p _0(0)} + 2p _0(0)\expo^{-t} \qquad \text{when } \mu = 1.
	\end{align}
\end{theorem}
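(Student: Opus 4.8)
The plan is to show that $\mathcal{E}[{\bf p}(t)]$ is itself the Lyapunov functional: I would derive a closed scalar (in)equality for $t \mapsto \mathcal{E}[{\bf p}(t)]$ and then integrate it. Throughout, write $q(t) \coloneqq \mu - p_1(t) \geq 0$ for the proportion of unhappy particles. Two purely algebraic identities, valid for every ${\bf p} \in \mathcal{S}_\mu$, are the backbone:
\[
q = \sum_{n\geq 2} n\,p_n, \qquad \mathcal{E}[{\bf p}] = \sum_{n \geq 2} n(n-1)\,p_n,
\]
the latter being just $\sum_{n\geq 0} n^2 p_n - \sum_{n\geq 0} n\,p_n$ rewritten using $\sum_{n\geq 0} n\,p_n = \mu$ (it re-proves $\mathcal{E}\geq 0$).

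\textbf{Step 1: the energy identity.} Differentiate $\sum_n n^2 p_n$ along a solution and substitute \eqref{eq:L_rewrite}. It is convenient to note that $\mathcal{L}$ is the forward equation of a birth--death chain whose up-rate out of every state is $q$ and whose down-rate out of state $n$ is $n$ for $n\geq 2$ (and $0$ for $n\leq 1$), so that $\ds\frac{\dd}{\dd t}\sum_n n^2 p_n = \sum_n p_n\big[q\,(2n+1) + d_n(1-2n)\big]$ with $d_n = n\,\mathbbm 1_{\{n\geq 2\}}$. Expanding and collapsing the lower-order terms via $\sum_n p_n = 1$, $\sum_n n\,p_n = \mu$, and the two identities above, a short computation gives the exact relation
\[
\frac{\dd}{\dd t}\mathcal{E}[{\bf p}(t)] = 2\mu\,q(t) - 2\,\mathcal{E}[{\bf p}(t)].
\]
The only delicate point is justifying the term-by-term differentiation of the infinite series; this is legitimate once the initial second moment is finite (otherwise $\mathcal{E}[{\bf p}(0)]=+\infty$ and the assertion is vacuous), and one reads off from the identity itself that $\mathcal{E}[{\bf p}(t)]$ then stays finite for all $t$.

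\textbf{Step 2: the subcritical case $\mu<1$.} Since $n(n-1)\geq n$ for $n\geq 2$, the two identities give $q(t)\leq\mathcal{E}[{\bf p}(t)]$, whence $\ds\frac{\dd}{\dd t}\mathcal{E}\leq -2(1-\mu)\,\mathcal{E}$, and Gr\"onwall's lemma yields \eqref{eq:energy_decay_mu<1} immediately.

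\textbf{Step 3: the critical case $\mu=1$.} Now $q\leq\mathcal{E}$ only gives $\ds\frac{\dd}{\dd t}\mathcal{E}\leq 0$, so a sharper handle on $q$ is required, and the key observation is that one must also track $p_0(t)$. From $\sum_n p_n=1$ one gets $q = p_0 + \sum_{n\geq 2}p_n$, and from $q=\sum_{n\geq 2}n\,p_n\geq 2\sum_{n\geq 2}p_n$ one obtains the two-sided bound $p_0(t)\leq q(t)\leq 2\,p_0(t)$. The left inequality combined with $\ds\frac{\dd}{\dd t}p_0 = -q\,p_0$ gives $\ds\frac{\dd}{\dd t}p_0\leq -p_0^2$, hence $p_0(t)\leq \big(t + 1/p_0(0)\big)^{-1}$ by ODE comparison. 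Substituting $q\leq 2p_0$ into the energy identity yields $\ds\frac{\dd}{\dd t}\mathcal{E}\leq \frac{4}{t+1/p_0(0)} - 2\mathcal{E}$; multiplying by $\expo^{2t}$, integrating, and estimating $\int_0^t \frac{\expo^{2s}}{s+c}\,\dd s$ with $c=1/p_0(0)$ by splitting $[0,t]$ at $t/2$ (bounding $\frac{1}{s+c}$ by $\frac1c$ on $[0,t/2]$ and by $\frac{1}{t/2+c}$ on $[t/2,t]$) produces precisely \eqref{eq:energy_decay_mu=1}.

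\textbf{Main obstacle.} The subcritical case is essentially free once the energy identity is established; the real work is the borderline case $\mu=1$, where one has to recognize that $p_0$ decays only polynomially and must be monitored alongside $\mathcal{E}$, and then push through the integrating-factor/interval-splitting estimate with the exact constants so that the convenient bounds $q\le 2p_0$ and $p_0(t)\le(t+1/p_0(0))^{-1}$ recombine into the stated three-term bound.
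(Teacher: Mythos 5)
Your proposal is correct and follows essentially the same route as the paper: the same exact identity $\frac{\dd}{\dd t}\mathcal{E} = -2\,\mathcal{E} + 2\,\mu\,(\mu - p_1)$, the same comparison $\mu - p_1 \le \mathcal{E}$ for $\mu<1$, and for $\mu=1$ the same chain $p_0' \le -p_0^2$, $1-p_1 \le 2\,p_0$, followed by the integrating factor and the split of $\int_0^t \expo^{2s}/(s+1/p_0(0))\,\dd s$ at $t/2$. The only cosmetic difference is that you derive the energy identity via the birth--death generator acting on $n^2$ rather than by direct term-by-term summation.
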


\begin{proof}
Assume $\cE [\bp (0)] < +\infty$ otherwise the conclusion holds trivially. A straightforward computation gives us
\begin{align}
	\label{eq:evolution_of_E}
	\begin{aligned}
		\frac{\dd}{\dd t} \cE [\bp] &= 2 p _2 + (\mu - p _1)(p _0 - p _1) \\
		& \qquad + \sum _{n \ge 2} n ^2 \left[
			(n + 1) p _{n + 1} - n p _n - (\mu - p _1)(p _n - p _{n - 1})
		\right] \\
		&= 2 p _2 + (\mu - p _1)(p _0 - p _1) \\
		& \qquad + \sum _{n \ge 2} \left[
			n ^2 (n + 1) p _{n + 1} - n ^3 p _n
		\right] - (\mu - p _1) \sum _{n \ge 2} n ^2 (p _n - p _{n - 1}) \\
		&= 2 p _2 + (\mu - p _1)(p _0 - p _1) \\
		& \qquad + \left(
			p _1 - 2 p _2 + \mu - 2 \sum _{n \ge 0} n ^2 p _n
		\right) - (\mu - p _1)(p _0 - p _1 - 1 - 2 \mu) \\
		&= p _1 + \mu - 2 \sum _{n \ge 0} n ^2 p _n + (\mu - p _1)(1 + 2 \mu) \\
		&= -2 \cE [\bp] + 2 \mu (\mu - p _1).
	\end{aligned}
\end{align}
We observe that $0 \le \mu - p _1 = \sum _{n \ge 2} np _n \le \sum _{n \ge 2} (n ^2 - n)p _n = \cE [\bp]$ for all $\bp \in \cS _\mu$, thus for $\mu \in (0, 1)$ we deduce from \eqref{eq:evolution_of_E} that
\[\frac{\dd}{\dd t} \cE [\bp] \le -(2-2\mu)\cE [\bp],\] from which the exponential decay of $\cE [\bp (t)]$ \eqref{eq:energy_decay_mu<1} follows immediately. On the other hand, for $\mu = 1$, we have $2 p _0 + p _1 \ge 1$ since $\bp \in \cS _1$. The differential equation satisfied by $p _0$ implies that $p _0 ' = -(1 - p _1) p _0 \le -p ^2 _0$, whence
\begin{align*}
	p _0 (t) \le \frac{1}{t + 1 / p _0 (0)} \text{ and } p _1 (t) \ge 1 - \frac{2}{t + 1 / p _0 (0)}
\end{align*}
for all $t \ge 0$. Consequently, we derive from \eqref{eq:evolution_of_E} the following differential inequality:
\begin{align*}
	\frac{\dd}{\dd t} \cE [\bp] \le -2 \cE [\bp] + \frac{4}{t + 1 / p _0(0)},
\end{align*}
whence
\begin{align}
	\label{eq:energy_inequality}
	\cE [\bp (t)] \le \cE [\bp (0)]\expo^{-2t} + 4\expo^{-2t}\int_0^t \frac{\expo^{2s}}{s+ 1/p _0(0)} \dd s.
\end{align}
To conclude the proof and reach the advertised upper bound \eqref{eq:energy_decay_mu=1}, it suffices to notice that
\begin{align*}
	\int _0 ^t \frac{\expo ^{2s}}{s + 1 / p _0 (0)} \dd s
	&= \int _0 ^\frac t2 \frac{\expo ^{2s}}{s + 1 / p _0 (0)} \dd s + \int _\frac t2 ^t \frac{\expo ^{2s}}{s + 1 / p _0 (0)} \dd s \\
	&\le p _0 (0) \int _0 ^\frac t2 \expo ^{2s} \dd s + \frac1{\frac t2 + \frac1{p _0 (0)}} \int _\frac t2 ^t \expo ^{2s} \dd s \\
	&\le \frac 12 p _0 (0) \expo ^t + \frac{\expo^{2t}}{t + 2 / p _0 (0)}
\end{align*}
for all $t \ge 0$. Thus the proof of Theorem \ref{thm:1} is completed.
\end{proof}

To illustrate the decay of the energy $\cE $ numerically, we use the same set-up as in the previous experiment shown in Figure \ref{fig:energy_decay} for two different values of $\mu \in (0, 1]$, using the semi-log scale.

\begin{figure}[htbp]
		\centering
		\includegraphics[width=.75\textwidth]{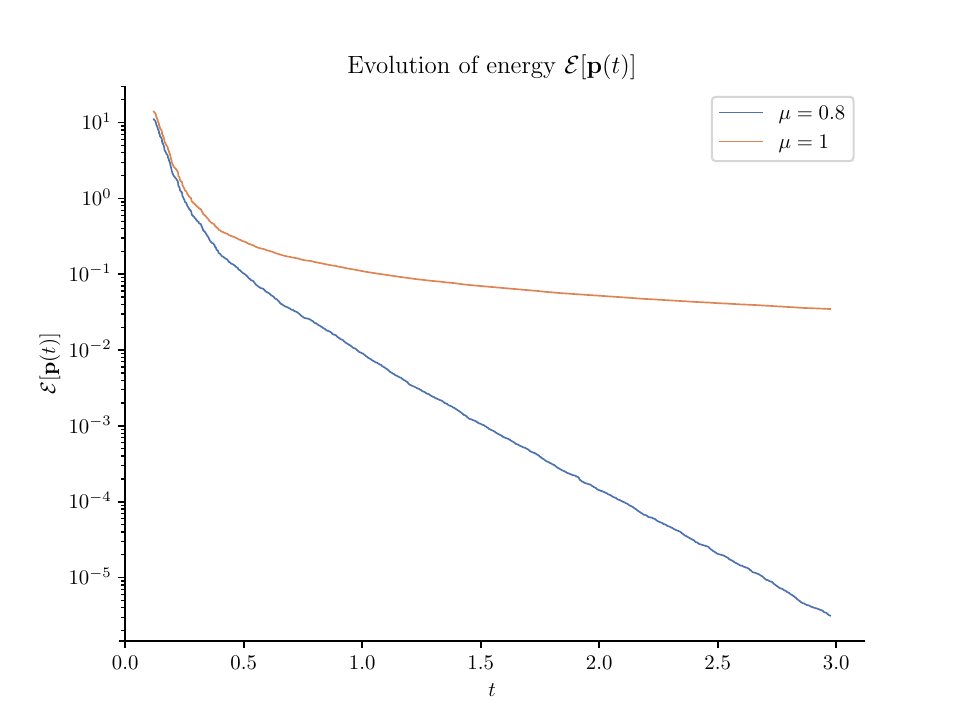}
		\caption{Evolution of the energy $\cE [\bp (t)]$ over $0\le t \le 3$ with $\mu = 0.8$ and $\mu = 1$. It can be seen from the picture that the energy decays exponentially for $\mu = 0.8$ with rate $C\expo ^{-0.4 t}$. For $\mu = 1$ the decay is slower.}
		\label{fig:energy_decay}
\end{figure}

As an immediate corollary, we can readily deduce the following strong convergence in $\ell^1$.

\begin{corollary}
	\label{cor:1}
	Under the settings of Theorem \ref{thm:1}, if $\bp (0)$ has a finite variance, then there exists some constant $C > 0$ depending only on $\mu$ and the initial datum $\bp (0)$ such that for all $t > 0$, it holds that
	\begin{align}
		\label{eq:ell1_decay_mu<1}
		\|\bp (t) - \bp ^*\| _{\ell ^1} \le C \expo^{-2 (1 - \mu) t} \qquad \text{when } \mu < 1,
	\end{align}
	and
	\begin{align}
		\label{eq:ell1_decay_mu=1}
		\|\bp (t) - \bp ^*\| _{\ell ^1} \le \frac Ct \qquad \text{when } \mu = 1.
	\end{align}
\end{corollary}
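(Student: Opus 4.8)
The plan is to deduce the corollary directly from Theorem~\ref{thm:1}, the point being that for $\mu \in (0,1]$ the functional $\mathcal{E}[{\bf p}(t)]$ actually controls the full $\ell^1$ distance to ${\bf p}^*$. Since ${\bf p}^* = (1-\mu,\mu,0,0,\ldots)$, I would start from the identity
\begin{equation*}
\|{\bf p}(t) - {\bf p}^*\|_{\ell^1} = |p_0(t) - (1-\mu)| + |p_1(t) - \mu| + \sum_{n\geq 2} p_n(t),
\end{equation*}
and estimate each of the three pieces using only the fact that ${\bf p}(t) \in \mathcal{S}_\mu$ (which holds by Lemma~\ref{lem:1}).

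The first step is to record the two bookkeeping relations from the constraints defining $\mathcal{S}_\mu$: the mean constraint gives $\mu - p_1 = \sum_{n\geq 2} n\,p_n \geq 0$ (so $|p_1 - \mu| = \mu - p_1$), while the mass constraint gives $p_0 - (1-\mu) = (\mu - p_1) - \sum_{n\geq 2} p_n$. Together with the trivial inequality $0 \le \sum_{n\geq 2} p_n \le \sum_{n\geq 2} n\,p_n = \mu - p_1$, these show $p_0 - (1-\mu) \ge 0$, and substituting back into the displayed identity the three terms telescope to
\begin{equation*}
\|{\bf p}(t) - {\bf p}^*\|_{\ell^1} = 2\,(\mu - p_1(t)).
\end{equation*}
The second step is the bound $\mu - p_1 = \sum_{n\geq 2} n\,p_n \le \sum_{n\geq 2} (n^2 - n)\,p_n = \mathcal{E}[{\bf p}]$, which is exactly the elementary inequality already used inside the proof of Theorem~\ref{thm:1}. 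Hence $\|{\bf p}(t) - {\bf p}^*\|_{\ell^1} \le 2\,\mathcal{E}[{\bf p}(t)]$ for all $t \ge 0$.

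With this in hand the statement follows by plugging in the decay estimates of Theorem~\ref{thm:1}. For $\mu < 1$, \eqref{eq:energy_decay_mu<1} gives $\|{\bf p}(t) - {\bf p}^*\|_{\ell^1} \le 2\,\mathcal{E}[{\bf p}(0)]\,\expo^{-2(1-\mu)t}$, and $\mathcal{E}[{\bf p}(0)] = \sum_n n^2 p_n(0) - \mu$ is finite precisely because ${\bf p}(0)$ has finite variance, so $C = 2\,\mathcal{E}[{\bf p}(0)]$ works. For $\mu = 1$, I would feed in \eqref{eq:energy_decay_mu=1} and absorb its three summands into a single $C/t$ using the elementary estimates $\expo^{-2t} \le \tfrac{1}{2\expo\,t}$, $\expo^{-t} \le \tfrac{1}{\expo\,t}$, and $\tfrac{1}{t + 2/p_0(0)} \le \tfrac1t$, all valid for $t>0$; this yields $\|{\bf p}(t)-{\bf p}^*\|_{\ell^1} \le C/t$ with $C$ depending only on $\mathcal{E}[{\bf p}(0)]$ and $p_0(0)$, hence only on $\mu$ and ${\bf p}(0)$ (note that when $\mu=1$ the hypothesis $p_0(0)=0$ would already force ${\bf p}(0)={\bf p}^*$, so implicitly $p_0(0)>0$).

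I do not expect a genuine obstacle here: the only real content is the observation that, for $\mu \le 1$, the second-moment-type functional $\mathcal{E}$ dominates $\|{\bf p} - {\bf p}^*\|_{\ell^1}$ — and this works only because ${\bf p}^*$ is supported on $\{0,1\}$, so the $\mathcal{S}_\mu$ constraints pin down $p_0$ and $p_1$ in terms of the tail mass $\sum_{n\geq 2} p_n$, which is in turn controlled by $\mathcal{E}$. Everything else is manipulation of the conservation laws and the time-decay bounds already proved.
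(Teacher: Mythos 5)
Your proposal is correct and follows essentially the same route as the paper: both reduce the claim to the identity $\|{\bf p}(t)-{\bf p}^*\|_{\ell^1}=2\,(\mu-p_1(t))$ (which the paper writes out only for $\mu=1$), bound this by $2\,\mathcal{E}[{\bf p}(t)]$ via $\mu-p_1=\sum_{n\geq 2}n\,p_n\leq\sum_{n\geq 2}(n^2-n)\,p_n$, and then insert the decay estimates of Theorem \ref{thm:1}. Your treatment of the degenerate case $p_0(0)=0$ when $\mu=1$ and the explicit absorption of the three summands of \eqref{eq:energy_decay_mu=1} into $C/t$ are fine.
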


\begin{proof}
	We only prove the bound \eqref{eq:ell1_decay_mu=1} when $\mu = 1$ as the other bound \eqref{eq:ell1_decay_mu<1} which is valid for $\mu \in (0, 1)$ can be handled in a pretty similar way. Notice that
	\begin{align*}
		\cE [\bp (t)] = \sum _{n \ge 0} (n ^2 - n) p _n (t) = \sum _{n \ge 2} (n ^2-n)p _n(t)
	\end{align*}
	and $n ^2 \le 2(n ^2-n)$ for all $n \ge 2$, hence $\sum _{n \ge 2} n ^2p _n(t) \le 2\cE [\bp (t)]$.
	Therefore, we deduce that
	\begin{align*}
		\|\bp (t) - \bp ^*\| _{\ell ^1} &= |p _0 (t)| + |1 - p _1(t)| + \sum _{n \ge 2} |p _n (t)| \\
		&= 1 - 2 p _1 (t) + \sum _{n = 0} ^\infty p _n (t) = 2(1 - p _1(t)).
	\end{align*}
	Moreover, from the definition of $\cE$ we know $\cE [\bp] = p _1 + \sum _{n \ge 2} n ^2 p _n - \mu$, so
	\begin{align*}
		1 - p _1(t)
		&= \sum _{n \ge 2} n ^2p _n(t) - \cE [\bp (t)] \le \cE [\bp (t)] \le \frac{C}{t}
	\end{align*}
	for some constant $C > 0$ depending on $\bp (0)$ and $\mu$.
\end{proof}

We comment here that the proof of Corollary \ref{cor:1} implies an inequality of the form $\|\bp (t) - \bp ^*\| _{\ell ^1} \leq C\,\cE [\bp]$ for some generic constant $C > 0$, which is reminiscent of a Csiszár--Kullback--Pinsker type inequality in the study of certain convex entropy-like Lyapunov functionals (see for instance \cite{jungel_entropy_2016}).

\begin{remark}
It is worth mentioning that the Lyapunov functional (or ``energy'') $\cE [\bq]$ employed in the large time analysis of the system \eqref{eq:law_limit_repeat} when $\mu \in (0, 1]$ differs drastically from the Lyapunov functional constructed in \cite{cao_sticky_2024} for the large time analysis of the mean-field sticky dispersion model when $\mu \in (0,1]$. In fact, the Lyapunov functional discovered in the study of the mean-field sticky dispersion model \cite{cao_sticky_2024} in the region where $\mu \in (0,1]$ is the so-called Gini index which can be utilized to prove convergence in terms of the Wasserstein distance $W_1$ \cite{cao_gini_2024} (which is certainly weaker than the $\ell^1$ convergence reported here). To some extent, it demonstrates the necessity to significantly modify the appropriate ``energy'' even the underlying mean-field dynamical systems appear to be similar from a structural perspective.
\end{remark}

\section{Overpopulated case: Relaxation to zero-truncated Poisson distribution}
\label{sec:sec4}

We use a different approach to study the system \eqref{eq:law_limit_repeat} when $\mu > 1$. Treating the active particles per site $a (t)$ as a known function, we first show that the probability generating function solves a first order partial differential equation (PDE), which turns out to be an explicitly solvable transport equation. By writing out the solution, we show that an auxiliary function
\begin{align}
	\label{eq:auxillary}
	v(t) \coloneqq \exp \left(\int _0 ^t \expo ^{-t + s} a (s) \dd s\right)
\end{align}
must satisfy a nonlinear Volterra-type integral equation \cite{gripenberg_volterra_1990}. We then study this integral equation to extract convergence and convergence rate, which further sheds light on the convergence of the distribution $\bp(t)$ to the zero-truncated Poisson distribution $\bpb$ \eqref{eq:zero_truncated_poisson}.

\subsection{Probability generating function}
\label{subsec:4.1}

\newcommand{\G}{G}
\newcommand{\g}{g}

Define the probability generating function $\G: [0, +\infty) \times [-1, 1]$ of the solution $\bp(t)$ to \eqref{eq:law_limit} by
\begin{align*}
	\G (t, z) = \sum _{n = 0} ^\infty p _n (t) z ^n.
\end{align*}
Since $p _n (t) \ge 0$ and $\sum _{n = 0} ^\infty p _n (t) = 1$, we know the above series is absolutely summable. Moreover, because $\sum _{n = 0} ^\infty n p _n (t) = \mu$, we know that
\begin{align*}
	\frac{\partial \G}{\partial z} (t, z) = \sum _{n = 1} ^\infty n p _n (t) z ^{n-1}
\end{align*}
is absolutely summable. The ODE system \eqref{eq:law_limit} can thus be written as the following PDE for $\G$:
\begin{align}
	\label{eq:PDE}
	\partial _t \G &= (1 - z) [\partial _z \G - p _1 (t) - a (t) \G] = (1 - z) [\partial _z \G - \mu + a (t) - a (t) \G].
\end{align}
Recall that $a (t) = \mu - p _1 (t)$. We also recall that the probability generating function can recover the following statistics:
\begin{align*}
	& \dfrac{\partial ^k}{\partial z ^k} \G (t, 0) = k! p _k (t) \\
	& \dfrac{\partial ^k}{\partial z ^k} \G (t, 1) = \sum _{n = 0} ^\infty n (n - 1) \cdots (n - k + 1) p _n (t).
\end{align*}
Moreover, since $p _k (t) \ge 0$ for all $k$, we have monotonicity in all derivatives:
\begin{align*}
	\dfrac{\partial ^k}{\partial z ^k} \G (t, 0) < \dfrac{\partial ^k}{\partial z ^k} \G (t, z) < \frac{\partial ^k}{\partial z ^k} \G (t, 1), \qquad (t, z) \in [0, \infty) \times (0, 1).
\end{align*}

We first solve \eqref{eq:PDE} in terms of $p _1 (t)$ using the method of characteristics \cite{evans_partial_2022}.

\begin{lemma}
	\label{lem:solution-to-pde}
	The probability generating function $\G$ can be expressed using the following explicit formula: For $z \in \mathbb C$ with $|1 - z| \le 1$, $t \ge 0$, we have
	\begin{align*}
		\G (t, 1 - z)
		&= 1 + \left(
			\G (0, 1 - z \expo ^{-t}) - 1 - \mu z \int _0 ^t [v (s)] ^{z \expo ^{-t + s}} \expo ^{-t + s} \dd s
		\right) [v (t)] ^{-z},
	\end{align*}
	where $v: [0, \infty) \to \mathbb R$ is defined by \eqref{eq:auxillary}.
\end{lemma}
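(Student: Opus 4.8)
The plan is to solve the linear transport equation \eqref{eq:PDE} by the method of characteristics, treating $p_1(t)$ as a given coefficient. First I would change variables to $w = 1 - z$, so that the PDE becomes $\partial_t \m = w\,[-\partial_w \m - (\mu - p_1(t))\,\m - p_1(t)]$ (using $\partial_z = -\partial_w$ and $1 - z = w$), which is a transport equation with characteristic speed $w$ in the $w$-direction. The characteristic curves solve $\dot w = w$, hence $w(s) = w_0\,\expo^{s}$; reparametrizing so that the characteristic passing through $(t, w)$ at time $t$ came from $(0, w\,\expo^{-t})$ at time $0$. Along such a characteristic, $\m$ satisfies a linear inhomogeneous ODE in $s$:
\begin{align*}
 \frac{\dd}{\dd s} \m(s, w\,\expo^{-t+s}) = -w\,\expo^{-t+s}\,\bigl[(\mu - p_1(s))\,\m(s, w\,\expo^{-t+s}) + p_1(s)\bigr].
\end{align*}

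Next I would solve this scalar linear ODE by the integrating-factor method. The homogeneous part contributes the factor $\exp\bigl(-w\int_0^t \expo^{-t+s}(\mu - p_1(s))\,\dd s\bigr)$; recognizing $\int_0^s \expo^{-s+\sigma}(\mu - p_1(\sigma))\,\dd\sigma$ as $\log v(s)$ from the definition \eqref{eq:auxillary}, the integrating factor along the characteristic becomes a power of $v$. Concretely, writing $h(s) = \m(s, w\,\expo^{-t+s})$, the integrating factor at parameter $s$ is $[v(s)]^{w\,\expo^{-t+s}}$ — here one must be slightly careful that the exponent itself depends on $s$, but differentiating $[v(s)]^{w\,\expo^{-t+s}}$ and using $v'(s)/v(s) = \mu - p_1(s) - \log v(s)$ (which follows by differentiating \eqref{eq:auxillary}) still produces exactly the homogeneous coefficient one needs, because the extra terms involving $\log v$ cancel against the corresponding terms generated in the product rule. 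Integrating from $0$ to $t$, using $h(0) = \m(0, w\,\expo^{-t})$ and $h(t) = \m(t, w)$, and rearranging yields
\begin{align*}
 \m(t, w) = 1 + \Bigl(\m(0, w\,\expo^{-t}) - 1 - \mu\,w\int_0^t [v(s)]^{w\,\expo^{-t+s}}\,\expo^{-t+s}\,\dd s\Bigr)\,[v(t)]^{-w},
\end{align*}
after noting that $[v(t)]^{w\cdot\expo^{0}} = [v(t)]^{w}$ and shifting by the constant $1$ (the constant function $\m \equiv 1$ is a particular steady solution of \eqref{eq:PDE}, which is what makes the "$1 + \cdots$" split natural). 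Substituting $w = z$ gives the claimed formula.

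The main obstacle — and the only genuinely delicate point — is justifying the analytic manipulations: the integrating factor $[v(s)]^{w\,\expo^{-t+s}}$ has an exponent that is itself $s$-dependent, so one cannot blindly quote the textbook first-order linear ODE solution formula. I would handle this by directly verifying that the claimed right-hand side, call it $\Phi(t, w)$, satisfies the transport PDE and the initial condition $\Phi(0, w) = \m(0, w)$, rather than deriving it forward; differentiating $\Phi$ in $t$ and checking it equals $w[-\partial_w\Phi - (\mu - p_1)\Phi - p_1]$ reduces, after using $\partial_t \log v(t) = \mu - p_1(t) - \log v(t)$, to an algebraic identity. A secondary point is the domain of validity: for $|1 - z| \le 1$ (i.e. $|w| \le 1$) and $t \ge 0$ one has $|w\,\expo^{-t+s}| \le 1$ for $s \in [0,t]$, so $1 - z\,\expo^{-t}$ stays in the closed unit disk around $1$ where $\m(0, \cdot)$ is defined and analytic (the generating function converges there since $\sum p_n(0) = 1$), and $v(s) > 0$ so the powers $[v(s)]^{\cdots}$ are unambiguous real (or, for complex $w$, defined via the principal branch since $v(s) > 0$); uniqueness of the characteristic ODE solution then gives that $\Phi$ coincides with $\m$ on this domain.
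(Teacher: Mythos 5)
Your proposal is correct and follows essentially the same route as the paper: characteristics $\gamma(s)=1-z\,\expo^{-t+s}$, the shift $g=\m-1$ (which turns the source term $p_1$ into $\mu$), and the integrating factor $[v(s)]^{z\expo^{-t+s}}$, whose $s$-derivative you verify correctly via $\frac{\dd}{\dd s}\log v(s)=\mu-p_1(s)-\log v(s)$. The only point you flag as delicate is in fact immediate in the paper's formulation: writing $H(s)=\int_0^s\expo^{\sigma}(\mu-p_1(\sigma))\,\dd\sigma$, the integrating factor is the textbook one $\expo^{z\expo^{-t}H(s)}$ (with $t$ fixed), and $[v(s)]^{z\expo^{-t+s}}$ is merely a rewriting of it, so no separate verification or direct substitution into the PDE is needed.
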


\begin{proof}
	We only prove for $z \in \mathbb R$ with $0 \le z \le 2$. This will be sufficient because both sides are analytic in $z$ and the equality follows by identity theorem.

	Define $\gamma (s) = 1 - z \expo ^{-t + s}$, then
	\begin{align*}
		\gamma (0) &= 1 - z \expo ^{-t}, &
		\gamma (t) &= 1 - z, &
		\gamma' (s) = -z \expo ^{-t + s} = -(1 - \gamma (s)).
	\end{align*}
	Now we let $g (s) = \G (s, \gamma (s)) - 1$, then the evolution of $\g$ satisfies
	\begin{align*}
		\g' (s) &= \partial _t \G (s, \gamma (s)) + \gamma' (s) \partial _z \G (s, \gamma (s)) \\
		&= [\partial _t - (1 - \gamma (s)) \partial _z] \G (s, \gamma (s)) \\
		&= - (1 - \gamma (s)) [a (s) \G (s, \gamma (s)) - a (t) + \mu] \\
		&=- z \expo ^{-t + s} [a (s) \g (s) + \mu],
	\end{align*}
	with
	\begin{align*}
		\g (0) = \G (0, \gamma (0)) - 1 = \G (0, 1 - z \expo ^{-t}) - 1.
	\end{align*}
	So $\g$ satisfies the following first order linear ODE with the above initial condition:
	\begin{align*}
		\g' (s) + z \expo ^{-t + s} a (s) \g (s) &= -\mu z \expo ^{-t + s}.
	\end{align*}
	Setting
	\begin{align}
		\label{eq:H}
		H (t) \coloneqq \int _0 ^t \expo ^s a (s) \dd s,
	\end{align}
	we have $v (t) = \expo ^{\expo ^{-t} H (t)}$ and
	\begin{align*}
		\g' (s) + z \expo ^{-t} H' (s) \g (s) &= -\mu z \expo ^{-t + s}, \\
		\frac{\dd}{\dd s} \left(\g (s) \expo ^{z \expo ^{- t} H (s)}\right) &= -\mu z \expo ^{-t + s} \expo ^{z \expo ^{- t} H (s)}, \\
		\frac{\dd}{\dd s} \left(\g (s) [v (s)] ^{z \expo ^{-t + s}}\right) &= -\mu z [v (s)] ^{z \expo ^{-t + s}} \expo ^{-t + s}.
	\end{align*}
	Integrating from $s=0$ to $s=t$ and using $v(0)=1$, we deduce that
	\begin{align*}
		\g (t) [v (t)] ^{z} &= g (0) - \mu z \int _0 ^t [v (s)] ^{z \expo ^{-t + s}} \expo ^{-t + s} \dd s.
	\end{align*}
	Finally, notice that $\g (t) = \G (t, \gamma (t)) - 1 = \G (t, 1 - z) - 1$ and $\g (0) = \G (0, 1 - z \expo ^{-t}) - 1$, we conclude the proof of Lemma \ref{lem:solution-to-pde}.
\end{proof}

\begin{remark}
	When $z = 0$, we have $\G (t, 1) = \G (0,1) = 1$ for all $t \ge 0$. This can be seen from \eqref{eq:PDE} which shows $\partial _t \G (t, 1) = 0$, so the total mass is conserved. We can similarly derive conservation of the first moment by $\partial _t \partial _z \G (t, 1) = -\partial _z \G (t, 1) + \mu$ from \eqref{eq:PDE}.
\end{remark}

\subsection{Convergence of the auxiliary function}
\label{subsec:4.2}

In this subsection, we first show that the auxiliary $v$ function satisfies an integral equation, and then prove it converges to a limit which depends on the value of $\mu$.

\begin{lemma}
	\label{lem:integral}
	For $t \ge 0$, $v (t)$ satisfies
	\begin{align*}
		v (t) = 1 - f _0 (t) + f _0 (0) \expo ^{-\int _0 ^{t} \log v (s) \dd s} + \mu \int _0 ^{t} [v (s)] ^{\expo ^{-t + s}} \expo ^{-t + s} \dd s,
	\end{align*}
	where $f _0 (t) \coloneqq \G (0, 1 - \expo ^{-t})$.
\end{lemma}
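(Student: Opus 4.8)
The plan is to derive the integral equation by specializing the explicit characteristic formula of Lemma~\ref{lem:solution-to-pde} to the single value $z = 1$ (admissible since $|1 - 1| = 0 \le 1$), at which the first argument of $\m$ collapses to the boundary point $0$. At $z = 1$ we have $\m(t, 1 - z) = \m(t, 0) = p_0(t)$, $\m(0, 1 - z\expo^{-t}) = \m(0, 1 - \expo^{-t}) = f_0(t)$, $[v(t)]^{-z} = [v(t)]^{-1}$, and $[v(s)]^{z\expo^{-t+s}} = [v(s)]^{\expo^{-t+s}}$, so Lemma~\ref{lem:solution-to-pde} reads
\begin{align*}
 p_0(t) = 1 + \left(f_0(t) - 1 - \mu \int_0^t [v(s)]^{\expo^{-t+s}}\, \expo^{-t+s}\, \dd s\right) [v(t)]^{-1}.
\end{align*}
Multiplying through by $v(t)$ and rearranging isolates $v(t)$ as
\begin{align*}
 v(t) = v(t)\, p_0(t) + 1 - f_0(t) + \mu \int_0^t [v(s)]^{\expo^{-t+s}}\, \expo^{-t+s}\, \dd s.
\end{align*}

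It then remains to recognize $v(t)\, p_0(t)$ as $f_0(0)\, \expo^{-\int_0^t \log v(s)\, \dd s}$. First, $f_0(0) = \m(0, 0) = p_0(0)$. Second, the $n = 0$ line of \eqref{eq:law_limit_repeat}--\eqref{eq:L_rewrite} is $p_0' = -(\mu - p_1)\, p_0$, hence $p_0(t) = p_0(0)\, \expo^{-\int_0^t (\mu - p_1(s))\, \dd s}$; and from \eqref{eq:auxillary} (equivalently $v(t) = \expo^{\expo^{-t} H(t)}$ with $H$ defined in \eqref{eq:H}) we read off $\log v(t) = \int_0^t \expo^{-t+s}(\mu - p_1(s))\, \dd s$. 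Multiplying these,
\begin{align*}
 v(t)\, p_0(t) = p_0(0)\, \exp\left(\int_0^t \bigl(\expo^{-t+s} - 1\bigr)\,(\mu - p_1(s))\, \dd s\right),
\end{align*}
while a change of the order of integration over the triangle $\{0 \le r \le s \le t\}$ gives
\begin{align*}
 \int_0^t \log v(s)\, \dd s = \int_0^t \int_0^s \expo^{-s+r}\,(\mu - p_1(r))\, \dd r\, \dd s = \int_0^t (\mu - p_1(r))\,\bigl(1 - \expo^{-t+r}\bigr)\, \dd r,
\end{align*}
which is precisely $-\int_0^t \bigl(\expo^{-t+s} - 1\bigr)(\mu - p_1(s))\, \dd s$. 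Combining the last two displays yields $v(t)\, p_0(t) = f_0(0)\, \expo^{-\int_0^t \log v(s)\, \dd s}$, and substituting this back into the formula for $v(t)$ produces the asserted identity.

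I do not anticipate a genuine obstacle here: the argument only invokes the explicit solution of Lemma~\ref{lem:solution-to-pde} together with the elementary scalar ODE for $p_0$. The one place warranting a line of care is the Fubini interchange that matches the two exponents, but this is immediate, since $p_1$ and $v$ are continuous and every integrand involved is bounded on the compact interval $[0, t]$.
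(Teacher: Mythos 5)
Your proposal is correct and follows essentially the same route as the paper: specialize Lemma~\ref{lem:solution-to-pde} to $z=1$, rearrange, and identify $v(t)\,p_0(t)$ with $f_0(0)\,\expo^{-\int_0^t \log v(s)\,\dd s}$ via the explicit solution of the $p_0$ equation. The only cosmetic difference is that you verify the exponent identity $\int_0^t(\mu-p_1(s))\,\dd s = \log v(t) + \int_0^t \log v(s)\,\dd s$ by Fubini on the triangle, whereas the paper obtains the same identity by integration by parts with $H(t)=\int_0^t \expo^s(\mu-p_1(s))\,\dd s$; both computations are valid and equivalent.
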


\begin{proof}
	We recall that $\G (t, 0) = p _0 (t)$. Applying Lemma \ref{lem:solution-to-pde} with $z = 1$ we obtain
	\begin{align}
		\notag
		&\G (t, 0)
		= 1 + \left(
			\G (0, 1 - \expo ^{-t}) - 1 - \mu \int _0 ^t [v (s)] ^{\expo ^{-t + s}} \expo ^{-t + s} \dd s
		\right) [v (t)] ^{-1} \\
		\label{eq:vt1}
		\implies & v (t) = v (t) p _0 (t) + 1 - f _0 (t) + \mu \int _0 ^t [v (s)] ^{\expo ^{-t + s}} \expo ^{-t + s} \dd s.
	\end{align}
	It remains to solve for $p _0 (t)$ in terms of $v$. Using the differential equation satisfied by $p _0 (t)$, we know that
	\begin{align}
		\label{eq:p0}
		p _0 (t) \exp \left(\int _0 ^t \mu - p _1 (s) \dd s \right) = p _0 (0) = \G (0, 0) = f _0 (0).
	\end{align}
	In view of \eqref{eq:H}, we can solve $p _0 (t)$ in terms of $v$ by
	\begin{align*}
		& a (s) = \expo ^{-s} H' (s) \\
		\implies & \int _0 ^t a (s) \dd s = \expo ^{-t} H (t) + \int _0 ^t H (s) \expo ^{-s} \dd s = \log v (t) + \int _0 ^t \log v (s) \dd s \\
		\implies & p _0 (t) = f _0 (0) \exp \left(-\int _0 ^t a (s) \dd s\right) = f _0 (0) [v (t)] ^{-1} \expo ^{-\int _0 ^t \log v (s) \dd s}.
	\end{align*}
	Combine with \eqref{eq:vt1} we conclude the proof.
\end{proof}

We now investigate the limiting behavior of $v (t)$ as $t \to \infty$. First, since $0 \le p _1 (s) \le 1$ for all $s \in [0, t]$, we can bound $v$ using its definition \eqref{eq:auxillary}:
\begin{align*}
	(\mu - 1) \int _0 ^t \expo ^{-t + s} \dd s \le \log v (t) \le \mu \int _0 ^t \expo ^{-t + s} \dd s.
\end{align*}
By direct computations, we obtain the following bound:
\begin{align}
	\label{eq:v-bound}
	(\mu - 1) (1 - \expo ^{-t}) \le \log v (t) \le \mu (1 - \expo ^{-t}).
\end{align}
In particular, $1 \le v (t) \le \expo ^\mu$.

Next, we control the nonlinear integral term using the following lemma.
\begin{lemma}
	\label{lem:squeeze}
	Let $t _0 \ge 0$. If $1 \le m \le v (t) \le M$ for all $t \ge t _0$, then
	\begin{align*}
		\varphi (m) - e _1 (t) \le \mu \int _0 ^t [v (s)] ^{\expo ^{-t + s}} \expo ^{-t + s} \dd s \le \varphi (M) + e _1 (t), \qquad \forall t \ge t _0,
	\end{align*}
	where $\varphi: (0, +\infty) \to \mathbb R _+$ is a strictly increasing continuous function defined by
	\begin{align*}
		\varphi (x) \coloneqq \mu \int _0 ^\infty x ^{\expo ^{-t}} \expo ^{-t} \dd t = \begin{cases}
				\mu \cdot \dfrac{x - 1}{\log x}, & x > 0, x \neq 1 \\
				\mu, & x = 1
		\end{cases}
	\end{align*}
	and the remainder term is $e _1 (t) \coloneqq \mu \expo^{\mu + t _0} \expo ^{-t}$.
\end{lemma}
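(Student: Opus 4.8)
The whole lemma reduces to one scalar identity together with a careful splitting of the integral. First I would record that, for every $x>0$, the substitution $u=\expo^{-r}$ turns the defining integral of $\varphi$ into $\varphi(x)=\mu\int_0^1 x^u\,\dd u$, from which the closed form $\varphi(x)=\mu\,(x-1)/\log x$ (with the removable value $\mu$ at $x=1$) follows by elementary antidifferentiation. This representation also makes the qualitative claims immediate: for each fixed $u\in(0,1)$ the map $x\mapsto x^u$ is continuous and \emph{strictly} increasing on $(0,\infty)$, so $\varphi$ is continuous and strictly increasing (and, incidentally, positive) on $(0,\infty)$; one may justify continuity by dominated convergence or simply quote the closed form.

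\textbf{Main estimate.} For the two-sided bound on $\mu\int_0^t [v(s)]^{\expo^{-t+s}}\,\expo^{-t+s}\,\dd s$, I would change variables $r=t-s$ to rewrite it as $\mu\int_0^t [v(t-r)]^{\expo^{-r}}\,\expo^{-r}\,\dd r$, and then split the $r$-interval at $t-t_0\ (\ge 0$ since $t\ge t_0)$. On $[0,t-t_0]$ we have $s=t-r\ge t_0$, hence $m\le v(t-r)\le M$; since $v\ge 1$ and $\expo^{-r}>0$, exponentiation by $\expo^{-r}$ preserves the inequalities, so this part lies between $\mu\int_0^{t-t_0} m^{\expo^{-r}}\expo^{-r}\,\dd r$ and $\mu\int_0^{t-t_0} M^{\expo^{-r}}\expo^{-r}\,\dd r$. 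Enlarging the range of the upper integral to $[0,\infty)$ yields exactly $\varphi(M)$, while the lower integral equals $\varphi(m)$ minus the tail $\mu\int_{t-t_0}^\infty m^{\expo^{-r}}\expo^{-r}\,\dd r$.

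\textbf{Controlling the leftovers.} It remains to absorb all discarded or added pieces into $e_1(t)=\mu\,\expo^{\mu+t_0}\,\expo^{-t}$. Here I would invoke the global bound $1\le v\le\expo^\mu$ from \eqref{eq:v-bound}: any factor $[v(\cdot)]^{\expo^{-r}}$ is at most $(\expo^\mu)^{\expo^{-r}}=\expo^{\mu\expo^{-r}}\le\expo^{\mu}$ because $\expo^{-r}\le 1$. Hence both the tail $\mu\int_{t-t_0}^\infty m^{\expo^{-r}}\expo^{-r}\,\dd r$ and the remaining contribution $\mu\int_{t-t_0}^t [v(t-r)]^{\expo^{-r}}\expo^{-r}\,\dd r$ (coming from $s\in[0,t_0]$, where $m,M$ are unavailable) are each bounded by $\mu\,\expo^{\mu}\int_{t-t_0}^\infty\expo^{-r}\,\dd r=\mu\,\expo^{\mu+t_0}\,\expo^{-t}=e_1(t)$. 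Combining: the full integral is at most $\varphi(M)+e_1(t)$; for the lower bound the $s\in[0,t_0]$ piece has integrand $\ge\expo^{-r}>0$, so the full integral is at least $\mu\int_0^{t-t_0} m^{\expo^{-r}}\expo^{-r}\,\dd r\ge\varphi(m)-e_1(t)$.

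\textbf{Anticipated difficulty.} I do not expect a real obstacle; the only thing requiring care is the bookkeeping of inequality directions across the split, in particular recalling that on the $s\in[0,t_0]$ block one must abandon $m,M$ in favour of the crude global bound, and checking that this crude bound genuinely decays like $e_1(t)$ rather than being merely $O(1)$ — this is precisely why the split is made at $t-t_0$ and why the extra factor $\expo^{t_0}$ appears in $e_1(t)$.
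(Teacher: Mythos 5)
Your proposal is correct and follows essentially the same route as the paper: split the integral at $s = t_0$ (your substitution $r = t-s$ and split at $r = t - t_0$ is the same decomposition), bound the main piece by $\varphi(m)$ and $\varphi(M)$ via monotonicity of $x \mapsto x^{\expo^{-r}}$ on $[1,\infty)$, and absorb both the $s \in [0,t_0]$ block and the tail $\int_{t-t_0}^\infty$ into $e_1(t)$ using the crude bound $v \le \expo^\mu$. No gaps.
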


\begin{proof}
	First, we separate our target integral as follows:
	\begin{align*}
		\mu \int _0 ^t [v (s)] ^{\expo ^{-t + s}} \expo ^{-t + s} \dd s = \mu \int _0 ^{t _0} [v (s)] ^{\expo ^{-t + s}} \expo ^{-t + s} \dd s + \mu \int _{t _0} ^t [v (s)] ^{\expo ^{-t + s}} \expo ^{-t + s} \dd s.
	\end{align*}
	The first term has an exponential decay since
	\begin{align*}
		\mu \int _0 ^{t _0} [v (s)] ^{\expo ^{-t + s}} \expo ^{-t + s} \dd s \le \mu \int _0 ^{t _0} \expo ^{\mu \expo ^{-t + s}} \expo ^{-t + s} \dd s \le \mu \expo ^\mu \expo ^{-t + t _0}.
	\end{align*}
	The second term is bounded from above by
	\begin{align*}
		\mu \int _{t _0} ^t [v (s)] ^{\expo ^{-t + s}} \expo ^{-t + s} \dd s \le \mu \int _{t _0} ^t M ^{\expo ^{-t + s}} \expo ^{-t + s} \dd s &= \mu \int _{0} ^{t - t _0} M ^{\expo ^{-s}} \expo ^{-s} \dd s \le \varphi (M)
	\end{align*}
	and from below by
	\begin{align*}
		\mu \int _{t _0} ^t [v (s)] ^{\expo ^{-t + s}} \expo ^{-t + s} \dd s &\ge \mu \int _{t _0} ^t m ^{\expo ^{-t + s}} \expo ^{-t + s} \dd s = \mu \int _{0} ^{t - t _0} m ^{\expo ^{-s}} \expo ^{-s} \dd s \\
		&= \varphi (m) - \mu \int _{t - t _0} ^\infty m ^{\expo ^{-s}} \expo ^{-s} \dd s \ge \varphi (m) - \mu m \expo ^{-t + t _0}.
	\end{align*}
	The proof is thus completed because $m \le v (t) \le \expo ^\mu$ and $\mu m \expo ^{-t + t _0} \le e _1 (t)$.
\end{proof}

We are now ready to prove the convergence of $v (t)$.

\begin{lemma}
	\label{lem:convergence-v}
	Let $\nu = \mu + W _0 (-\mu \expo ^{-\mu})$, then
	\begin{align*}
		\lim _{t \to +\infty} v (t) = \expo ^\nu.
	\end{align*}
\end{lemma}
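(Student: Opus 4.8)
The plan is to bootstrap the two-sided bound $1 \le v(t) \le \expo^\mu$ into convergence by combining Lemma \ref{lem:integral} with Lemma \ref{lem:squeeze}. First I would analyze the integral equation from Lemma \ref{lem:integral}: as $t \to \infty$ the term $f_0(t) = \m(0, 1 - \expo^{-t}) \to \m(0,1) = 1$, and the term $f_0(0)\, \expo^{-\int_0^t \log v(s)\, \dd s}$ decays to zero exponentially because $\log v(s) \to$ something positive (indeed $\log v(s) \ge (\mu-1)(1 - \expo^{-s})$, which is bounded below by a positive constant for $s$ large, so the integral diverges linearly). Hence the asymptotic behavior of $v(t)$ is governed entirely by the nonlinear integral term $\mu \int_0^t [v(s)]^{\expo^{-t+s}}\, \expo^{-t+s}\, \dd s$, which Lemma \ref{lem:squeeze} squeezes between $\varphi(m)$ and $\varphi(M)$ whenever $m \le v(s) \le M$ eventually.

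The core of the argument is an iteration on the pair $(\liminf v, \limsup v)$. Set $m_* \coloneqq \liminf_{t\to\infty} v(t)$ and $M_* \coloneqq \limsup_{t\to\infty} v(t)$, so that $1 \le m_* \le M_* \le \expo^\mu$. Given any $\eps > 0$, for $t_0$ large we have $m_* - \eps \le v(t) \le M_* + \eps$ for all $t \ge t_0$; feeding this into Lemma \ref{lem:squeeze} and letting $t \to \infty$ in the integral equation (the remainder $e_1(t)$ and the other lower-order terms vanish) yields
\begin{align*}
 \varphi(m_* - \eps) \le m_* \quad\text{and}\quad M_* \le \varphi(M_* + \eps).
\end{align*}
Letting $\eps \downarrow 0$ and using continuity of $\varphi$ gives $\varphi(m_*) \le m_*$ and $M_* \le \varphi(M_*)$. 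Now I would study the scalar equation $x = \varphi(x)$, i.e. $x \log x = \mu(x-1)$: substituting $x = \expo^y$ turns this into $y\,\expo^y = \mu(\expo^y - 1)$, i.e. $(\mu - y)\expo^y = \mu$, i.e. $(y - \mu)\expo^{y-\mu} = -\mu\,\expo^{-\mu}$, whose solutions are $y = \mu + W_0(-\mu\expo^{-\mu}) = \nu$ and $y = 0$ (from the two branches of Lambert $W$, the second giving $x=1$). Thus the fixed points of $\varphi$ on $[1,\expo^\mu]$ are exactly $1$ and $\expo^\nu$.

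It remains to pin down which fixed point $v(t)$ selects and to rule out the trivial one $x=1$. Here I would use the lower bound already in hand: $\log v(t) \ge (\mu-1)(1 - \expo^{-t})$, so $m_* \ge \expo^{\mu-1} > 1$, which forces $m_* = \expo^\nu$ since $1$ is excluded. For the upper estimate, a monotonicity/convexity analysis of $\varphi$ shows that on $(1, \expo^\mu]$ one has $\varphi(x) < x$ for $x > \expo^\nu$ and $\varphi(x) > x$ for $x < \expo^\nu$ (i.e. $\expo^\nu$ is the unique crossing and $\varphi$ lies below the diagonal beyond it); combined with $M_* \le \varphi(M_*)$ this forces $M_* \le \expo^\nu$. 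Hence $m_* = M_* = \expo^\nu$ and $v(t) \to \expo^\nu$. The main obstacle I anticipate is making the passage to the limit in the integral equation fully rigorous — justifying that the liminf/limsup inequalities above genuinely follow from Lemma \ref{lem:squeeze} requires care about the order of limits (one must choose $t_0$ depending on $\eps$, apply the squeeze with $m = m_*-\eps$, $M = M_*+\eps$, and only then send $t\to\infty$ and afterwards $\eps\to 0$) — together with the elementary but slightly fiddly verification of the sign of $\varphi(x) - x$, which is where the precise characterization $\nu \in (\mu-1,\mu)$ and the choice of the principal branch $W_0$ enter.
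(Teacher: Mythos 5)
Your proposal is correct and follows the paper's architecture almost exactly: pass to the limit in the integral equation of Lemma \ref{lem:integral}, discard the lower-order terms $1 - f_0(t)$ and $f_0(0)\expo^{-\int_0^t\log v}$, squeeze the nonlinear integral via Lemma \ref{lem:squeeze} with $m_*-\eps$ and $M_*+\eps$, and conclude $\varphi(m_*)\le m_* \le M_* \le \varphi(M_*)$. The only genuine divergence is the final step. The paper shows $\varphi$ maps $[\expo^{\mu-1},\expo^\mu]$ into itself and is a contraction there (via the derivative bound $\varphi'(\expo^x)\le L_\mu<1$), so the inequalities force $m_*=M_*=\expo^\nu$; you instead analyze the sign of $\varphi(x)-x$, showing $\varphi$ lies above the diagonal on $(1,\expo^\nu)$ and below it on $(\expo^\nu,\expo^\mu]$, which together with $m_*\ge\expo^{\mu-1}>1$ yields the same conclusion. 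Your route is slightly more elementary for this lemma in isolation (it needs only the sign of $\mu(x-1)-x\log x$, not a Lipschitz constant), but note that the paper's contraction constant $L_\mu$ is not wasted effort: it is reused in Lemma \ref{lem:crude} to extract the first quantitative rate, so the contraction computation has to be done anyway. One small imprecision: $x=1$ is not a fixed point of $\varphi$ (one has $\varphi(1)=\mu\neq 1$); it is a spurious root introduced when you clear the denominator to get $x\log x=\mu(x-1)$. This is harmless since you exclude it via the lower bound $v(t)\ge\expo^{(\mu-1)(1-\expo^{-t})}$, but the statement ``the fixed points of $\varphi$ on $[1,\expo^\mu]$ are exactly $1$ and $\expo^\nu$'' should be corrected to say that $\expo^\nu$ is the unique fixed point.
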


\begin{proof}
	Denote
	\begin{align}
		\label{def:e2}
		e _2 (t) = 1 - f _0 (t) + f _0 (0) \exp \left(-\int _0 ^{t} \log v (s) \dd s \right).
	\end{align}
	Since
	\begin{align*}
		&0 \le 1 - f _0 (t) = 1 - \G (0, 1 - \expo ^{-t}) \le \partial _z \G (0, 1) \expo ^{-t} = \mu \expo ^{-t}, \\
		&\int _0 ^{t} \log v (s) \dd s \ge \int _0 ^{t} (\mu - 1) (1 - \expo ^{-s}) \dd s = (\mu - 1) (t + \expo ^{-t} - 1) \ge (\mu - 1) (t - 1),
	\end{align*}
	we deduce that
	\begin{align}
		\label{eq:e2}
		e _2 (t) \le \mu \expo ^{-t} + f _0 (0) \expo ^{-(\mu - 1) (t - 1)} \to 0 \qquad \text{ as } t \to +\infty.
	\end{align}
	With the above error function, we can write the integral equation in Lemma \eqref{lem:integral} as
	\begin{align}
		\label{eq:v-error}
		v (t) = e _2 (t) + \mu \int _0 ^{t} [v (s)] ^{\expo ^{-t + s}} \expo ^{-t + s} \dd s.
	\end{align}
	Since $v (t) \in [1, \expo^\mu]$ is bounded (uniformly in time), $e _2(t) \to 0$ as $t \to +\infty$, the liminf and limsup of $v$ exist and are bounded between
	\begin{align*}
		m &\coloneqq \liminf _{t \to +\infty} v (t) \ge \lim _{t \to +\infty} \exp \left((\mu - 1) (1 - \expo ^{-t})\right) = \expo ^{\mu - 1}, \\
		M &\coloneqq \limsup _{t \to +\infty} v (t) \le \lim _{t \to +\infty} \exp \left(\mu (1 - \expo ^{-t})\right) = \expo ^\mu.
	\end{align*}

	We claim that $\varphi (m) \le m \le M \le \varphi (M)$.
	To prove this, we first fix $\varepsilon > 0$ with $\varepsilon < \expo ^{\mu - 1} - 1$, then there exists $t _0 > 0$ such that
	\begin{align*}
		1 < m - \varepsilon \le v (t) \le M + \varepsilon, \qquad \forall t \ge t _0.
	\end{align*}
	Invoking Lemma \ref{lem:squeeze} together with the monotonicity of the map $\varphi$, we conclude for all $t \ge t _0$ that
	\begin{align*}
		\varphi (m - \varepsilon) - e _1 (t) \le \mu \int _0 ^t [v (s)] ^{\expo ^{-t + s}} \expo ^{-t + s} \dd s \le \varphi (M + \varepsilon) + e _1 (t), \qquad \forall t \ge t _0.
	\end{align*}
	Therefore, the limsup and liminf of $v$ are bounded by
	\begin{align*}
		M &= \limsup _{t \to +\infty} v (t) \le \lim _{t \to +\infty} e _2 (t) + \varphi (M + \varepsilon) + \lim _{t \to +\infty} e _1 (t) = \varphi (M + \varepsilon), \\
		m &= \liminf _{t \to +\infty} v (t) \ge \lim _{t \to +\infty} e _2 (t) + \varphi (m - \varepsilon) - \lim _{t \to +\infty} e _1 (t) = \varphi (m - \varepsilon).
	\end{align*}
	This is true for any sufficiently small $\varepsilon$ so the advertised claim $\varphi (m) \le m \le M \le \varphi (M)$ is justified.

	Within the interval $[\expo ^{\mu - 1}, \expo ^\mu]$, we demonstrate that the function $\varphi$ is a contraction mapping. Indeed, note that
	\begin{align*}
		\varphi (\expo ^{\mu - 1}) &= \mu \cdot \frac{\expo ^{\mu - 1} - 1}{\mu - 1} = \expo ^{\mu - 1} + \frac{\expo ^{\mu - 1} - \mu}{\mu - 1} > \expo ^{\mu - 1}, \\
		\varphi (\expo ^\mu) &= \mu \cdot \frac{\expo ^\mu - 1}{\mu} = \expo ^\mu - 1 < \expo ^\mu.
	\end{align*}
	Moreover, the derivatives of $\varphi$ are
	\begin{align*}
		\varphi' (\expo ^x) &= \mu x ^{-2} \expo ^{-x} (1 - \expo ^{x} + x \expo ^{x}) > 0, \\
		\varphi'' (\expo ^x) &= \mu x ^{-3} \expo ^{-x} (2 - 2 \expo ^{-x} - x \expo ^{-x} - x) < 0.
	\end{align*}
	So for $x \in [\mu - 1, \mu]$, we know $\varphi' (\expo ^x)$ is bounded by
	\begin{align*}
		0 < \varphi' (\expo ^\mu) \le \varphi' (\expo ^x) \le \varphi' (\expo ^{\mu - 1}) = \frac{\mu ^2 - 2 \mu + \mu \expo ^{1 - \mu}}{\mu ^2 - 2 \mu + 1} \coloneqq L _\mu < 1.
	\end{align*}

	Since $\varphi: [\expo ^{\mu - 1}, \expo ^{\mu}] \to [\expo ^{\mu - 1}, \expo ^{\mu}]$ is a contraction mapping and $[m, M]$ is contained in this interval, we conclude from $\varphi (m) \le m \le M \le \varphi (M)$ that $m = M = \expo ^{\nu}$ where $\expo ^\nu$ is the unique fixed point of $\varphi$ in $[\expo ^{\mu - 1}, \expo ^{\mu}]$. It satisfies
	\begin{align*}
		\expo ^\nu = \mu \frac{\expo ^\nu - 1}{\nu} &\iff \nu \expo ^{\nu} = \mu \expo ^{\nu} - \mu
		\iff (\nu - \mu) \expo ^{\nu - \mu} = -\mu \expo ^{-\mu}.
	\end{align*}
	 We observe that $\nu - \mu$ and $-\mu$ are two real roots to the equation $z \expo ^z = -\mu \expo ^{-\mu}$, hence we can use the Lambert $W$ function to select the principal branch and arrive at the relation $\nu - \mu = W _0 (-\mu \expo ^{-\mu})$. This finishes the proof of Lemma \ref{lem:convergence-v}.
\end{proof}

\begin{remark}
	The convergence of $v$ implies the pointwise convergence of the probability generating function. Indeed, sending $t \to +\infty$ in Lemma \ref{lem:solution-to-pde} and applying the result of Lemma \ref{lem:squeeze} (with $v$ being replaced by $v^z$), we obtain
	\begin{align*}
		\lim _{t \to +\infty} \G (t, 1 - z) &= 1 + \left(
			\G (0, 1) - 1 - z \varphi (\expo ^{\nu z})
		\right) \expo ^{-\nu z}
		\\
		&= 1 - \mu \cdot \frac{1 - \expo ^{-\nu z}}{\nu} = \frac{\mu - \nu}{\nu} \left(\expo ^{\nu (1 - z)} - 1\right) = G _{\bpb} (1 - z),
	\end{align*}
	where $G _{\bpb}$ denotes the PGF of the zero-truncated Poisson distribution $\bpb$ \eqref{eq:zero_truncated_poisson}. According to a classical result \cite{esquivel_probability_2008}, if the PGF exists and the above convergence holds in a neighborhood of $z = 0$ then $\bp (t)$ converges to $\bpb$ in distribution. However, in the following we will strengthen the results obtained so far and prove $\ell^p$ convergence for $p=1,2$, which are stronger convergence guarantees.
\end{remark}

\subsection{Quantitative convergence of the auxiliary function}
\label{subsec:4.3}

Next, we study the quantitative rate of convergence of $v (t) \xrightarrow{t \to \infty} \expo ^{\nu}$. We do it in two steps. In the first step we prove a $\cO (\expo ^{- c \sqrt t})$ decay, and in the second step we refine the previous estimate to reach a $\cO (\expo ^{-c t})$ decay. First, we show an improved lower bound.

\begin{lemma}
	\label{lem:lower-bound}
	There exists $T _0 = T _0 (\mu) > 0$ such that $v (t) \ge \expo ^{\mu - 1}$ for all $t \ge T _0$.
\end{lemma}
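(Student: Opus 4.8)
The plan is to bootstrap the crude a priori bracket $1 \le v (t) \le \expo ^\mu$ from \eqref{eq:v-bound} into the sharper lower bound $v (t) \ge \expo ^{\mu - 1}$ by iterating the integral identity \eqref{eq:v-error} against the squeeze estimate of Lemma \ref{lem:squeeze}. The content of the lemma — beyond what is already in Lemma \ref{lem:convergence-v}, which gives $v (t) \to \expo ^\nu > \expo ^{\mu - 1}$ — is that the threshold $T _0$ must be controlled by $\mu$ \emph{alone}, uniformly in the initial datum ${\bf p} (0)$. This is possible because every quantity entering the argument is uniform in ${\bf p} (0)$: the bracket $[1, \expo ^\mu]$ in \eqref{eq:v-bound}, the remainder $e _1 (t) = \mu\, \expo ^{\mu + t _0}\, \expo ^{-t}$ of Lemma \ref{lem:squeeze} (which depends only on $\mu$ and $t _0$), and the term $e _2 (t)$ of \eqref{def:e2}, which is \emph{nonnegative} — since $1 - f _0 (t) = 1 - \m (0, 1 - \expo ^{-t}) \ge 0$ and $f _0 (0) = p _0 (0) \ge 0$ — so it can simply be discarded when bounding $v$ from below.

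\textbf{The iteration.} Let $\varphi$ be as in Lemma \ref{lem:squeeze}. Recall from the proof of Lemma \ref{lem:convergence-v} that $\varphi$ is strictly increasing (indeed $\varphi' (\expo ^x) = \mu\, x ^{-2}\, \expo ^{-x}\, (1 - \expo ^x + x\, \expo ^x) > 0$), that $\expo ^\nu$ is its unique fixed point, and that $\varphi (x) > x$ for $x \in (1, \expo ^\nu)$; the last fact is elementary, since $\varphi (\expo ^y) > \expo ^y$ is equivalent to $h (y) \coloneqq \mu (1 - \expo ^{-y}) - y > 0$, and $h (0) = 0$, $h$ increases on $(0, \log \mu)$ then decreases, with unique positive zero $\nu$ (by $\nu = \mu (1 - \expo ^{-\nu})$, noting $\nu > \mu - 1 > \log \mu$). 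Set $m _0 = 1$, $\tau _0 = 0$, and recursively $m _{k+1} = \tfrac12 (m _k + \varphi (m _k))$, choosing $\tau _{k+1} \ge \tau _k$ large enough (an explicit formula in $\mu$, $\tau _k$, $m _k$ is available) that $\mu\, \expo ^{\mu + \tau _k}\, \expo ^{-t} \le \varphi (m _k) - m _{k+1}$ for all $t \ge \tau _{k+1}$. I claim inductively that $1 \le m _k < \expo ^\nu$ and $v (t) \ge m _k$ for all $t \ge \tau _k$, with $m _k, \tau _k$ depending only on $\mu$. The base case is \eqref{eq:v-bound}. For the inductive step, apply Lemma \ref{lem:squeeze} with $t _0 = \tau _k$, $m = m _k$, $M = \expo ^\mu$, and combine with \eqref{eq:v-error} and $e _2 \ge 0$ to obtain
\begin{align*}
 v (t) = e _2 (t) + \mu \int _0 ^t [v (s)] ^{\expo ^{-t + s}}\, \expo ^{-t + s}\, \dd s \ge \varphi (m _k) - \mu\, \expo ^{\mu + \tau _k}\, \expo ^{-t} \ge m _{k+1}, \qquad t \ge \tau _{k+1}.
\end{align*}
Since $\varphi$ is increasing with $\varphi (\expo ^\nu) = \expo ^\nu$ and $\varphi (m _k) > m _k$, we get $m _k < m _{k+1} < \expo ^\nu$, closing the induction.

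Thus $(m _k)$ is strictly increasing and bounded above by $\expo ^\nu$, hence converges to some $\ell$ with $\varphi (\ell) = \ell$; as $\varphi (1) = \mu \neq 1$ and $\varphi (x) > x$ on $(1, \expo ^\nu)$, necessarily $\ell = \expo ^\nu$. Because $\nu \in (\mu - 1, \mu)$ we have $\expo ^{\mu - 1} < \expo ^\nu$, so there is $K = K (\mu)$ with $m _K \ge \expo ^{\mu - 1}$; taking $T _0 \coloneqq \tau _K$ yields $v (t) \ge m _K \ge \expo ^{\mu - 1}$ for all $t \ge T _0$, with $T _0$ a function of $\mu$ only. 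The only point requiring care is precisely this uniformity: one must discard (rather than estimate) $e _2$, verify that each constant produced along the recursion ($m _k$, $\tau _k$, and the stopping index $K$) is determined by $\mu$ alone, and check the monotonicity inequality $\varphi (x) > x$ on $(1, \expo ^\nu)$ so that the iteration does not stall before reaching $\expo ^{\mu - 1}$.
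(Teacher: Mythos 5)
Your proof is correct, and it is carefully argued: the uniformity in the initial datum is tracked properly (discarding $e_2 \ge 0$ rather than estimating it, and noting that $m_k$, $\tau_k$, $K$ are built from $\mu$ alone), and the key monotonicity fact $\varphi(x) > x$ on $(1, \expo^\nu)$ is verified rather than assumed. The route is genuinely different from the paper's, though both hinge on Lemma \ref{lem:squeeze} and on $\varphi$ lying strictly above the identity below its fixed point. The paper's proof is a \emph{single} application of Lemma \ref{lem:squeeze}: it feeds in the time-dependent lower bound $v(t) \ge \expo^{(\mu-1)(1-\expo^{-t_0})}$ from \eqref{eq:v-bound}, which already tends to $\expo^{\mu-1}$ as $t_0 \to \infty$, and then uses only the single strict inequality $\varphi(\expo^{\mu-1}) > \expo^{\mu-1}$ to absorb the error terms $e_1$, $e_2$ for $t \ge 2t_0$ with $t_0 = t_0(\mu)$ large. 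You instead start from the weaker information $v \ge 1$ and run the fixed-point iteration $m_{k+1} = \tfrac12(m_k + \varphi(m_k)) \nearrow \expo^\nu$, stopping after finitely many steps once $m_K \ge \expo^{\mu-1}$. What the paper's version buys is brevity — one step instead of an induction — precisely because \eqref{eq:v-bound} hands it a starting point already at the target level $\expo^{\mu-1}$ minus an exponentially small defect. What your version buys is robustness: it would still work if the only a priori information were $1 \le v \le \expo^\mu$, and it makes explicit the dynamical-systems picture (iterates of $\varphi$ climbing to the fixed point $\expo^\nu$) that underlies Lemma \ref{lem:convergence-v} as well. Both are complete proofs of the stated lemma.
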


\begin{proof}
	For some $t _0 > 0$ to be determined, since $v (t) \ge \expo ^{(\mu - 1) (1 - \expo ^{-t})} \ge \expo ^{(\mu - 1) (1 - \expo ^{-t _0})}$ for all $t \ge t _0$, by Lemma \ref{lem:squeeze} we know that
	\begin{align*}
		v (t) \ge e _2 (t) + \varphi \left(\expo ^{(\mu - 1) (1 - \expo ^{-t _0})}\right) - e _1 (t) \ge \varphi \left(\expo ^{(\mu - 1)(1 - \expo ^{-t _0})}\right) - \mu \expo ^{\mu + t _0} \expo ^{-t}
	\end{align*}
	holds for all $t \ge t _0$. In particular, for all $t \ge 2 t _0$, we obtain
	\begin{align*}
		v (t) \ge \varphi\left(\expo ^{(\mu - 1)(1 - \expo ^{-t _0})}\right) - \mu \expo ^{\mu - t _0} \to \varphi (\expo ^{\mu - 1}) \qquad \text{ as } t _0 \to +\infty.
	\end{align*}
	Since $\varphi (\expo ^{\mu - 1}) > \expo ^{\mu - 1}$ (recall the proof of Lemma \ref{lem:convergence-v}), we can find $T _0 \coloneqq 2 t _0$ for some sufficiently large $t _0$ depending on $\mu$ such that $v (t) \ge \expo ^{\mu - 1}$ for all $t \ge T _0$.
\end{proof}

\begin{lemma}
	\label{lem:crude}
	There exist constants $C, \delta > 0$ depending only on $\mu$, such that
	\begin{align*}
		\left\lvert v (t) - \expo ^\nu \right\rvert \le C \expo ^{- \sqrt {\delta t}}, \qquad \forall t > 0.
	\end{align*}
\end{lemma}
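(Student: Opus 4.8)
The idea is to turn the integral equation \eqref{eq:v-error} into a \emph{contractive trapping scheme}: we construct an increasing sequence of times $t _0 < t _1 < \cdots$ and bands $[m _k, M _k] \ni \expo ^\nu$ with $m _k \le v (t) \le M _k$ for all $t \ge t _k$, and show that $M _k - m _k$ shrinks geometrically in $k$. The inputs are \eqref{eq:v-error} together with the bound $0 \le e _2 (t) \le C _\mu\, \expo ^{-c _\mu t}$ where $c _\mu \coloneqq \min (1, \mu - 1) > 0$ (which follows from \eqref{eq:e2}), Lemma \ref{lem:squeeze}, and the properties of $\varphi$ already established in the proof of Lemma \ref{lem:convergence-v}: $\varphi$ is strictly increasing, maps $[\expo ^{\mu - 1}, \expo ^\mu]$ into itself, is $L _\mu$-Lipschitz there with $L _\mu < 1$, and $\expo ^\nu$ is its unique fixed point, with $\varphi (\expo ^{\mu - 1}) > \expo ^{\mu - 1}$ and $\varphi (\expo ^\mu) = \expo ^\mu - 1$. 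By Lemma \ref{lem:lower-bound} we may take $t _0 \coloneqq T _0$, $m _0 \coloneqq \expo ^{\mu - 1}$, $M _0 \coloneqq \expo ^\mu$, and then $\expo ^\nu \in [m _0, M _0]$.

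The inductive step: assume $[m _k, M _k] \subseteq [\expo ^{\mu - 1}, \expo ^\mu]$, $\expo ^\nu \in [m _k, M _k]$, and $m _k \le v (t) \le M _k$ for all $t \ge t _k$. Fix a window length $\Delta _k > 0$ (to be chosen) and set $t _{k + 1} \coloneqq t _k + \Delta _k$. Applying Lemma \ref{lem:squeeze} with its parameter $t _0$ taken to be $t _k$, inserting the result into \eqref{eq:v-error}, and using $e _2 \ge 0$, we get $\varphi (m _k) - \eta _k \le v (t) \le \varphi (M _k) + \eta _k$ for all $t \ge t _{k + 1}$, where $\eta _k$ is any bound valid on $[t _{k + 1}, \infty)$ for $e _1 (t) + e _2 (t) = \mu\, \expo ^{\mu + t _k}\, \expo ^{-t} + e _2 (t)$; concretely one takes $\eta _k \coloneqq \mu\, \expo ^\mu\, \expo ^{-\Delta _k} + C _\mu\, \expo ^{-c _\mu t _{k + 1}}$. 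Set $m _{k + 1} \coloneqq \varphi (m _k) - \eta _k$ and $M _{k + 1} \coloneqq \varphi (M _k) + \eta _k$. Then $\varphi (\expo ^\nu) = \expo ^\nu$ and monotonicity give $\expo ^\nu \in [m _{k + 1}, M _{k + 1}]$; the bounds $\varphi (\expo ^\mu) = \expo ^\mu - 1$ and $\varphi (\expo ^{\mu - 1}) - \expo ^{\mu - 1} =: \theta _\mu > 0$ give $[m _{k + 1}, M _{k + 1}] \subseteq [\expo ^{\mu - 1}, \expo ^\mu]$ as soon as $\eta _k \le \min (1, \theta _\mu)$; and the Lipschitz estimate yields the recursion $a _{k + 1} \le L _\mu\, a _k + 2\, \eta _k$ for $a _k \coloneqq M _k - m _k$.

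It remains to calibrate the windows. Take $\Delta _k \coloneqq \kappa\, (k + 1)$, so $t _k = t _0 + \tfrac{\kappa}{2} k (k + 1) \le t _0 + \kappa\, k ^2$ and $\eta _k \le A _\mu\, \expo ^{-c _\mu \kappa k}$ for a constant $A _\mu$; picking $\kappa = \kappa (\mu, {\bf p}(0))$ large enough ensures both $\eta _0 \le \min (1, \theta _\mu)$ — hence $\eta _k \le \min (1, \theta _\mu)$ for all $k$, since $\eta _k$ is decreasing — and $\alpha < c _\mu \kappa$, where $\alpha \coloneqq \tfrac12 \log (1 / L _\mu)$. A routine induction on $a _{k + 1} \le L _\mu a _k + 2 \eta _k$ then gives $a _k \le B\, \expo ^{-\alpha k}$ with $B \coloneqq \max \bigl( a _0, \tfrac{2 A _\mu}{\sqrt{L _\mu} - L _\mu} \bigr)$, because $\expo ^{-\alpha} = \sqrt{L _\mu} > L _\mu$. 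Since $k \ge \sqrt{(t _k - t _0) / \kappa}$, this reads $a _k \le B\, \expo ^{- \alpha \sqrt{(t _k - t _0) / \kappa}}$. Finally, for $t \in [t _k, t _{k + 1})$ we have $|v (t) - \expo ^\nu| \le M _k - m _k = a _k$ (as $\expo ^\nu \in [m _k, M _k]$), and since $t < t _{k + 1}$ while $t _k - t _0 \ge \tfrac13 (t - t _0)$ for $k \ge 1$ (because $t _k - t _0 = \tfrac{\kappa}{2} k (k + 1)$ and $t - t _0 < \tfrac{\kappa}{2} (k + 1)(k + 2)$), we obtain $|v (t) - \expo ^\nu| \le C\, \expo ^{- \sqrt{\delta t}}$ for all $t \ge t _1$, with $\delta, C$ depending only on $\mu$ and ${\bf p}(0)$; for $t \in [0, t _1]$ the bound is trivial from $1 \le v (t) \le \expo ^\mu$ after enlarging $C$.

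The delicate point is the calibration of $\Delta _k$. Constant windows do not suffice: the $e _1$-contribution to $\eta _k$ is then a fixed amount $\mu\, \expo ^\mu\, \expo ^{-\Delta}$ at every step, so the scheme traps $v$ only inside a band of width $\approx \expo ^{-\Delta} / (1 - L _\mu)$ and never collapses it to $\{\expo ^\nu\}$; windows growing too fast make the $t _k$ explode and ruin the rate. Linearly growing windows, which force $t _k \le t _0 + \kappa k ^2$, are the balance point, and this quadratic relation between $t _k$ and $k$ is precisely what converts geometric-in-$k$ decay into the $\sqrt t$ exponent — which is why this lemma only gives the ``crude'' rate, to be sharpened to $\expo ^{-ct}$ afterwards. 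A secondary bookkeeping point is keeping every $[m _k, M _k]$ inside $[\expo ^{\mu - 1}, \expo ^\mu]$ so the contraction constant $L _\mu$ stays valid, and handling the $e _2$ term, whose rate $c _\mu = \min (1, \mu - 1)$ degenerates as $\mu \downarrow 1$ but remains strictly positive.
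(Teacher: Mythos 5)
Your proposal is correct and follows essentially the same strategy as the paper's proof: combining Lemma \ref{lem:squeeze} with the contraction property of $\varphi$ on $[\expo^{\mu-1},\expo^\mu]$ to obtain a geometric recursion along quadratically spaced times $t_k \sim k^2$, which converts the geometric decay in $k$ into the $\expo^{-\sqrt{\delta t}}$ rate. The only cosmetic difference is that the paper tracks the symmetric quantity $r(t)=\sup_{s\ge t}\lvert v(s)-\expo^\nu\rvert$ instead of explicit bands $[m_k,M_k]$, which slightly shortens the bookkeeping you do to keep the bands inside $[\expo^{\mu-1},\expo^\mu]$.
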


\begin{proof}
	For $t \ge 0$, we set
	\begin{align*}
		r (t) \coloneqq \sup _{s \ge t} \left\lvert v (s) - \expo ^\nu \right\rvert,
	\end{align*}
	which is a nonnegative and decreasing function.

	Let $t _k \ge T _0$ be a sequence of increasing times to be determined later. Denote $m _k = \expo ^\nu - r (t _k)$ and $M _k = \expo ^\nu + r (t _k)$, then $m _k \le v (t) \le M _k$ for all $t \ge t _k$. Using Lemma \ref{lem:lower-bound}, we deduce that $\expo ^{\mu - 1} \le m _k \le M _k \le \expo ^{\mu}$. By Lemma \ref{lem:squeeze}, we know that at any time $s \ge t _k$, it holds that
	\begin{align*}
		\varphi (m _k) - \mu \expo ^{\mu + t _k - s} \le v (s) - e _2 (s) \le \varphi (M _k) + \mu \expo ^{\mu + t _k - s}.
	\end{align*}
	In particular,
	\begin{align*}
		\left\lvert v (s) - \expo ^\nu \right\rvert \le \max \{\expo ^\nu - \varphi (m _k), \varphi (M _k) - \expo ^\nu\} + e _2 (s) + \mu \expo ^{\mu + t _k - s}.
	\end{align*}
	Since $\varphi$ is $L _\mu$-Lipschitz in $[\expo ^{\mu - 1}, \expo ^\mu]$, we have
	\begin{align*}
		\max \{\expo ^\nu - \varphi (m _k), \varphi (M _k) - \expo ^\nu\} \le L _\mu r (t _k).
	\end{align*}
	Now we take the supremum over $s \ge t _{k + 1}$ to obtain the following recursive inequality
	\begin{align*}
		r (t _{k + 1}) &\le L _\mu r (t _k) + \sup _{s \ge t _{k + 1}} e _2 (s) + \mu \expo ^{\mu + t _k - t _{k + 1}} \\
		&\le L _\mu r (t _k) + C \expo ^{t _k - t _{k + 1}} + \expo ^{-(\mu - 1) (t _{k + 1} - 1)},
	\end{align*}
	in which $C > 0$ depends only on $\mu$ and we employed the bound \eqref{eq:e2} for $e _2 (s)$.

	Denote $\delta = - \log L _\mu > 0$. For $k \ge 0$, we take $t _k \coloneqq T _0 + k ^2 \delta$, then
	\begin{align*}
		& \expo ^{t _k - t _{k + 1}} = \expo ^{-(2 k + 1) \delta} = L _\mu ^{2 k + 1}, \\
		& \expo ^{-(\mu - 1) (t _{k + 1} - 1)} \le \expo ^{-(\mu - 1) [(k + 1) ^2 \delta - 1]} \le C\expo ^{-(2 k + 1) \delta} = C L _\mu ^{2 k + 1},
	\end{align*}
	where $C > 0$ is chosen such that $\log C + (\mu - 1) [(k + 1)^2 \delta - 1] \ge (2 k + 1) \delta$ for all $k \ge 0$. The recursive inequality now becomes
	\begin{align*}
		r (t _{k + 1}) &\le L _\mu r (t _k) + C L _\mu ^{2 k + 1}, \\
		L _\mu ^{-k - 1} r (t _{k + 1}) &\le L _\mu ^{-k} r (t _k) + C L _\mu ^k.
	\end{align*}
	Taking the summation from $0$ to $k - 1$, we have
	\begin{align*}
		L _\mu ^{-k} r (t _k) &\le r (t _0) + \frac C{1 - L_\mu} = C, \\
		r (t _k) &\le C L _\mu ^k = C \expo ^{-\sqrt{\delta (t _k - T _0)}} = C \expo ^{-\sqrt {\delta t _k}}.
	\end{align*}
	We thus conclude by monotonicity that
	\begin{align*}
		r (t) \le C \expo ^{-\sqrt {\delta t}}, \qquad t \ge T _0.
	\end{align*}
	Finally, the restriction $t \ge T _0$ can be easily removed by taking $C$ to be sufficiently large.
\end{proof}

We can see that both the convergence and the above estimate are based on comparison. To obtain a sharper estimate, we take the difference:
\begin{align}
	\notag
	v (t) - \expo ^\nu &= e _2 (t) + \mu \int _0 ^t [v (s)] ^{\expo ^{-t + s}} \expo ^{-t + s} \dd s - \mu \int _0 ^\infty \expo ^{\nu \expo ^{-s}} \expo ^{-s} \dd s \\
	\notag
	&= e _2 (t) + \mu \int _0 ^t [v (s)] ^{\expo ^{-t + s}} \expo ^{-t + s} \dd s \\
	\notag
	& \qquad - \mu \int _0 ^t \expo ^{\nu \expo ^{-t + s}} \expo ^{-t + s} \dd s - \mu \int _t ^\infty \expo ^{\nu \expo ^{-s}} \expo ^{-s} \dd s \\
	\label{eq:diff-v-0}
	&= e _2 (t) - \expo ^{-t} \varphi (\expo ^{\nu \expo ^{-t}}) + \mu \int _0 ^t \left(
		[v (s)] ^{\expo ^{-t + s}} - \expo ^{\nu \expo ^{-t + s}}
	\right) \expo ^{-t + s} \dd s.
\end{align}
Therefore, we can control the difference by
\begin{align}
	\label{eq:diff-v}
	\left\lvert v (t) - \expo ^\nu \right\rvert &\le \left\lvert e _2 (t) - \expo ^{-t} \varphi (\expo ^{\nu \expo ^{-t}}) \right\rvert + \mu \int _0 ^t \left\lvert
		[v (s)] ^{\expo ^{-t + s}} - \expo ^{\nu \expo ^{-t + s}}
	\right\rvert \expo ^{-t + s} \dd s \\
	\label{eq:diff-v-2}
	&\le \left\lvert e _2 (t) - \expo ^{-t} \varphi (\expo ^{\nu \expo ^{-t}}) \right\rvert + \mu \int _0 ^t \left\lvert
		v (s) - \expo ^\nu
	\right\rvert \expo ^{-2t + 2s} \dd s.
\end{align}
Here we used the fact that the power function $x \mapsto x ^\alpha$ is $\alpha$-Lipschitz on $[1, \infty)$ for $0 < \alpha < 1$.
Define this integral quantity on the right by
\begin{align*}
	y _2 (t) \coloneqq \int _0 ^t \left\lvert
		v (s) - \expo ^\nu
	\right\rvert \expo ^{-2t + 2s} \dd s.
\end{align*}
Then $y _2 ' (t) = \left\lvert v (t) - \expo ^\nu \right\rvert - 2 y _2 (t)$, so $y _2$ satisfies the following differential inequality
\begin{align*}
	y _2 ' (t) + (2 - \mu) y _2 (t) \le \left\lvert e _2 (t) - \expo ^{-t} \varphi (\expo ^{\nu \expo ^{-t}}) \right\rvert.
\end{align*}
If $\mu < 2$, then $y _2$ will decay exponentially. However, this estimate is useless when $\mu > 2$. We need to estimate the difference in the following integral sense.

\begin{lemma}
	\label{lem:finer}
	Denote $\bar c = \nu \wedge 1$. For any $c < \bar c$, define
	\begin{align*}
		y _{c} (t) &= \int _t ^\infty \left\lvert v (s) - \expo ^\nu \right\rvert \expo ^{-c t + c s} \dd s.
	\end{align*}
	If $y _{c} (0) < \infty$ converges, then for $c' = c + \frac{\expo ^{-\nu}}2 (1 - c)$, it holds for all $t \ge 0$ that
	\begin{align}
		\label{eq:yc_exponential decay}
		y _c (t) + y _2 (t) \le C y _c (0) \expo ^{-c' t} +
		\begin{cases}
			\frac C{(\bar c - c) (\bar c - c')} \expo ^{-c' t }, & c' < \bar c, \\
			\frac {C}{\bar c - c} t \expo ^{-\bar c t}, & c' = \bar c, \\
			\frac C{(\bar c - c) (c' - \bar c)} \expo ^{-\bar c t}, & c' > \bar c,
		\end{cases}
	\end{align}
	in which $C > 0$ is a constant depending only on $\mu$.
\end{lemma}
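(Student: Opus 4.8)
The plan is to convert the integral identity \eqref{eq:diff-v-0} into a scalar Volterra inequality for $w(t) \coloneqq \lvert v(t) - \expo^\nu\rvert$ whose convolution kernel has total mass strictly less than $1$, and then feed this inequality into the two elementary differential relations satisfied by $y_c$ and $y_2$. Differentiating the definitions gives $y_2'(t) = w(t) - 2\,y_2(t)$ and $y_c'(t) = -w(t) - c\,y_c(t)$ (in particular $y_c$ is decreasing, and $y_c(0) < \infty$ forces $y_c(t) < \infty$ for every $t$), whence the exact relation
\[
   (y_c + y_2)'(t) = -c\,y_c(t) - 2\,y_2(t)
   = -c'\,(y_c+y_2)(t) + (c'-c)\,y_c(t) - (2-c')\,y_2(t).
\]
By itself the first form yields only decay at rate $c$; the improvement to rate $c'$ will come from an a priori bound of the shape $y_c(t) \le C_1\,\expo^{-\bar c t} + C_2\,y_2(t)$, after which the second form closes a Grönwall estimate provided the coefficient $(c'-c)C_2 - (2-c')$ is non-positive.

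The technical heart is a refined bound, valid for $t$ past some $T_1 = T_1(\mu)$,
\[
   w(t) \le E(t) + \int_{T_1}^{t} \kappa(t-s)\,w(s)\,\dd s,
   \qquad E(t) \le C\,\expo^{-\bar c t},
   \qquad \int_0^\infty \kappa(u)\,\dd u < 1, \quad \kappa(u)\le C\,\expo^{-2u}.
\]
Here $E(t)$ collects $\lvert e_2(t) - \expo^{-t}\varphi(\expo^{\nu\expo^{-t}})\rvert$ from \eqref{eq:diff-v-0} together with the $s\in[0,T_1]$ part of the integral; that $E(t)\le C\expo^{-\bar c t}$ uses $1 - f_0(t)\le \mu\expo^{-t}$, the bound $\int_0^t \log v(s)\,\dd s \ge \nu t - C$ (a consequence of Lemma \ref{lem:crude}, since $\nu - \log v(s) \le C\expo^{-\sqrt{\delta s}}$ is integrable), the boundedness of $\varphi$ on $[1,\expo^\nu]$, and $\mathcal{O}(\expo^{-2(t-T_1)})$ for the discarded piece. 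The kernel comes from the mean-value estimate $\lvert [v(s)]^\alpha - \expo^{\nu\alpha}\rvert \le \alpha\,\xi^{\alpha-1}\,w(s)$ with $\xi$ between $v(s)$ and $\expo^\nu$: choosing $T_1$ (via Lemma \ref{lem:convergence-v}) so large that $v(s)\ge \expo^\nu(1-\eta)$ for $s\ge T_1$, one gets $\xi^{\alpha-1}\le[\expo^\nu(1-\eta)]^{\alpha-1}$, and after $u = t-s$ the kernel is $\kappa(u) = \tfrac{\mu}{\expo^\nu(1-\eta)}\expo^{-2u}\expo^{\nu\expo^{-u}}(1-\eta)^{\expo^{-u}}$, whose mass tends as $\eta\to 0$ to $\tfrac{\mu}{\expo^\nu}\int_0^\infty \expo^{-2u}\expo^{\nu\expo^{-u}}\,\dd u = \varphi'(\expo^\nu) < 1$ — exactly the quantity already shown $<1$ in the proof of Lemma \ref{lem:convergence-v} — so $\int\kappa < 1$ for $\eta$ small. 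This step is genuinely delicate when $\mu \ge 2$, because the naive Lipschitz constant $\mu$ of $x\mapsto x^\alpha$ on $[1,\infty)$ is then useless and one must exploit that $v$ has already relaxed near $\expo^\nu$.

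Next I would insert the refined inequality into $y_c(t) = \int_t^\infty w(s)\,\expo^{c(s-t)}\,\dd s$, interchange the order of integration, and split into $\{r>t\}$ and $\{r\le t\}$. The region $\{r>t\}$ produces precisely $\hat\kappa(c)\,y_c(t)$ with $\hat\kappa(c)\coloneqq\int_0^\infty\kappa(u)\expo^{cu}\,\dd u$; by the substitution $w=\expo^{-u}$ one checks $\hat\kappa(\bar c)\le 1$ — with equality only when $\nu\ge 1$ — so $\hat\kappa(c)<1$ for $c<\bar c$ and, moreover, $1-\hat\kappa(c)\gtrsim \bar c - c$, letting us absorb this self-interaction term. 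The region $\{r\le t\}$, using $\int_x^\infty\kappa(u)\expo^{cu}\,\dd u \le C\expo^{-(2-c)x}$, contributes a term $\lesssim y_2(t)$; combined with the $E$-term this yields $y_c(t)\le C_1\expo^{-\bar c t} + C_2\,y_2(t)$ for $t\ge T_1$. Plugging this in, the choice $c' = c + \tfrac{\expo^{-\nu}}{2}(1-c)$ together with $1-\hat\kappa(c)\gtrsim \bar c - c$ makes $(c'-c)C_2-(2-c')\le 0$, giving $(y_c+y_2)' + c'(y_c+y_2) \le C\expo^{-\bar c t}$ on $[T_1,\infty)$. A Grönwall integration then produces $y_c(t)+y_2(t) \le C\big(y_c(T_1)+y_2(T_1)\big)\expo^{-c'(t-T_1)} + C\expo^{-c't}\int_{T_1}^{t}\expo^{(c'-\bar c)s}\,\dd s$, and evaluating the last integral separately according to $c'<\bar c$, $c'=\bar c$, $c'>\bar c$ reproduces the three cases of \eqref{eq:yc_exponential decay}; the restriction $t\ge T_1$ and the replacement of $y_c(T_1)$ by $y_c(0)$ are absorbed into $C$, since $y_c$ is decreasing and $y_2(T_1),\lVert w\rVert_\infty$ are bounded by constants depending only on $\mu$.

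The main obstacle is the second step: producing a convolution inequality for $w$ whose kernel has mass $<1$ \emph{uniformly in} $\mu$, which compels one to quantify how close $v$ has already come to $\expo^\nu$ (drawing on Lemmas \ref{lem:crude}, \ref{lem:lower-bound}, \ref{lem:convergence-v}) and to recognise the effective kernel mass as $\varphi'(\expo^\nu)$. A secondary but still fiddly point is the coefficient bookkeeping in the third step — checking that the transparent choice $c' = c + \tfrac{\expo^{-\nu}}{2}(1-c)$, rather than the sharper but opaque $\sigma^{\ast}$ solving $\hat\kappa(\sigma^{\ast})=1$, still renders the $y_2$-coefficient non-positive, and that every $\expo^{-\bar c t}$ error term indeed decays at the advertised rate.
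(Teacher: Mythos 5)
Your proposal follows the paper's own route almost step for step: the same ODE identities for $y_c$ and $y_2$, the same crude $\expo^{-ct}$ decay from dropping the $y_2$ term, the same upgrade of the $e_2$ estimate to $C\,\expo^{-\bar c t}$ via Lemma \ref{lem:crude}, the same exchange of the order of integration splitting $\{r<t\}$ (which feeds $y_2$) from $\{r\ge t\}$ (the self-interaction), and the same identification of the critical contraction constant as $\varphi_c'(\expo^\nu)<1$ — your $\hat\kappa(c)$ is exactly the paper's $\varphi_c'(\expo^\nu)$, and your linearize-then-integrate kernel $\kappa$ is just a cosmetic reshuffling of the paper's integrate-then-apply-the-Lipschitz-constant-of-$\varphi_c$ argument via Lemma \ref{lem:lipschitz-phic}. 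So the architecture is right.

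There is, however, one quantitative slip that, as written, breaks your endgame precisely in the regime $\nu<1$. You invoke only $1-\hat\kappa(c)\gtrsim \bar c-c$, which gives $C_2\lesssim (\bar c-c)^{-1}$, and then $(c'-c)\,C_2 \sim \frac{\expo^{-\nu}}{2}(1-c)\cdot\frac{C}{\bar c-c}\to\infty$ as $c\uparrow\bar c=\nu<1$ (since $1-c$ stays bounded away from $0$ while $\bar c-c\to0$); the required inequality $(c'-c)\,C_2\le 2-c'$ then fails, and the Gr\"onwall step does not close. The correct gap — and the one the paper actually proves, via $\varphi_c'(\expo^\nu)\le 1-2\epsilon_c$ with $\epsilon_c=\frac{\mu\expo^{-\nu}}{2(2-c)}(1-c)$ — is $1-\hat\kappa(c)\ge \frac{\mu\expo^{-\nu}}{2-c}(1-c)\gtrsim 1-c$, which is strictly stronger than $\gtrsim\bar c-c$ when $\nu<1$ and is exactly what makes the specific choice $c'=c+\frac{\expo^{-\nu}}{2}(1-c)$ work: one gets $(c'-c)\,C_2\le\frac{2-c}{2}\le 2-c'$. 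Your own kernel computation delivers this stronger bound (it is the derivative-in-$c$ estimate in Lemma \ref{lem:lipschitz-phic}), so the fix is internal to your argument, but the constant you wrote down is the wrong one. A secondary point you gloss over: since the contraction margin degenerates as $c\to1$, the threshold time ($T_c$ in the paper, your $T_1$) must be allowed to grow like $\log^2\frac{2}{1-c}$, and one must check $(c'-c)\,T_c\le C$ uniformly in $c$ so that passing from $y_c(T_c)$ back to $y_c(0)$ and covering $t\le T_c$ costs only a $\mu$-dependent constant; the paper verifies this explicitly.
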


\begin{proof}
	Notice that
	\begin{align*}
		y _c' (t) = - \left\lvert v (t) - \expo ^\nu \right\rvert - c y _c (t), \qquad y _2 ' (t) = \left\lvert v (t) - \expo ^\nu \right\rvert - 2 y _2 (t).
	\end{align*}
	Combined they satisfy the following differential equality:
	\begin{align}
		\label{eq:diff-ineq}
		(y _c + y _2)' (t) &= - c y _c (t) - 2 y _2 (t) = - c (y _c (t) + y _2 (t)) - (2 - c) y _2 (t).
	\end{align}
	As $c < 2$, if we drop the last term then we directly get a decay at rate
	\begin{align}
		\label{eq:e-ct-decay}
		y _c (t) + y _2 (t) \le y _c (0) \expo ^{-c t}, \qquad \forall t \ge 0.
	\end{align}
	In the following, we would like to improve the decay rate from $c$ to $c'$.

	By \eqref{eq:diff-v}, we can estimate $y _c$ by
	\begin{align*}
		y _c (t) \le E _c (t) + \mu \int _t ^\infty \int _0 ^s
		\left\lvert
			[v (r)] ^{\expo ^{-s + r}} \expo ^{-s + r}  - \expo ^{\nu \expo ^{-s + r}} \expo ^{-s + r}
		\right\rvert \expo ^{-ct + cs}\dd r
		\dd s,
	\end{align*}
	where
	\begin{align*}
		E _c (t) \coloneqq \int _t ^\infty \left\lvert e _2 (s) - \expo ^{-s} \varphi (\expo ^{\nu \expo ^{-s}}) \right\rvert \expo ^{-ct + cs} \dd s.
	\end{align*}
	Note that now we have an improved estimate of $e _2$. Indeed, using the crude estimate established in Lemma \ref{lem:crude}, we have
	\begin{align*}
		\int _0 ^t \left\lvert \log v (s) - \nu \right\rvert \dd s \le \int _0 ^t \left\lvert v (s) - \expo ^\nu \right\rvert \dd s \le C.
	\end{align*}
	Therefore we can improve the estimate \eqref{eq:e2} to
	\begin{align}
		\label{eq:e2_improved}
		e _2 (t) \le \mu \expo ^{-t} + C \expo ^{-\nu t} \le C \expo ^{-\bar c t}.
	\end{align}
	As $\varphi (\expo ^{\nu \expo ^{-s}}) \le \varphi (\expo ^\nu)$ is bounded, we have
	\begin{align*}
		E _c (t) \le C \int _t ^\infty \expo ^{-\bar c s} \expo ^{-c t + c s} \dd s = \frac C{\bar c - c} \expo ^{-\bar c t}, \qquad \forall t \ge 0.
	\end{align*}
	We now exchange the order of the double integral:
	\begin{align*}
		&\mu \int _t ^\infty \int _0 ^s
		\left\lvert
			[v (r)] ^{\expo ^{-s + r}} \expo ^{-s + r}  - \expo ^{\nu \expo ^{-s + r}} \expo ^{-s + r}
		\right\rvert \expo ^{-ct + cs} \dd r
		\dd s \\
		&\qquad = \mu \int _0 ^\infty \int _{r \vee t} ^\infty
		\left\lvert
			[v (r)] ^{\expo ^{-s + r}} \expo ^{-s + r}  - \expo ^{\nu \expo ^{-s + r}} \expo ^{-s + r}
		\right\rvert \expo ^{-cr + cs} \dd s \expo ^{-ct + cr}
		\dd r \\
		&\qquad = \mu \int _0 ^\infty \left\lvert
			\int _{r \vee t} ^\infty
			[v (r)] ^{\expo ^{-s + r}} \expo ^{-(1 - c) (s - r)}  - \expo ^{\nu \expo ^{-s + r}} \expo ^{-(1 - c) (s - r)} \dd s
		\right\rvert \expo ^{-ct + cr} \dd r \\
		&\qquad = \mu \int _0 ^\infty \left\lvert
			\int _{(t - r) _+} ^\infty
			[v (r)] ^{\expo ^{-s}} \expo ^{- (1 - c) s}  - \expo ^{\nu \expo ^{-s}} \expo ^{- (1 - c)s} \dd s
		\right\rvert \expo ^{-ct + cr} \dd r.
	\end{align*}
	The absolute value symbol is extracted because the (inner) integrand has a fixed sign for each $r$ and $t$. To compute the inner integral, we define
	\begin{align*}
		\varphi _c (x) \coloneqq \mu \int _0 ^\infty x ^{\expo ^{-s}} \expo ^{- (1 - c) s} \dd s.
	\end{align*}
	Then the inner integral can be expressed as
	\begin{align*}
		&\mu
			\int _{(t - r) _+} ^\infty
			[v (r)] ^{\expo ^{-s}} \expo ^{-(1 - c) s}  - \expo ^{\nu \expo ^{-s}} \expo ^{-(1 - c) s} \dd s
		\\
		&\qquad = \begin{cases}
			\expo ^{- (1 - c) (t - r)}
			\left(
				\varphi _c ([v (r)] ^{\expo ^{-t + r}}) - \varphi _c (\expo ^{\nu \expo ^{-t + r}})
			\right), & r < t, \\
			\varphi _c (v (r)) - \varphi _c (\expo ^\nu), & r \ge t.
		\end{cases}
	\end{align*}
	We will leave the detailed proof of the Lipschitzness of $\varphi _c$ in Lemma \ref{lem:lipschitz-phic}. Denote $\epsilon _c = \frac{\mu \expo ^{-\nu}}{2(2 - c)} (1 - c) \in (0,1)$, then we show in Lemma \ref{lem:lipschitz-phic} that $\varphi _c$ is $(\mu - \epsilon _c)$-Lipschitz on $[1, \infty)$, and $(1 - \epsilon _c)$-Lipschitz on $[\expo ^\nu - \epsilon _c / \mu, +\infty)$. By Lemma \ref{lem:crude}, for $t > T _c$ with
	\begin{align*}
		T _c = \frac1\delta \log ^2 \left(\frac {C \mu}{\epsilon _c}\right) \le C \log ^2 \left(\frac{2}{1 - c}\right),
	\end{align*}
	we have $v (t) \in [\expo ^\nu - \epsilon _c / \mu, \expo ^\nu + \epsilon _c /\mu]$, so
	\begin{align*}
		&\mu \left\lvert
			\int _{(t - r) _+} ^\infty
			[v (r)] ^{\expo ^{-s}} \expo ^{-(1 - c) s}  - \expo ^{\nu \expo ^{-s}} \expo ^{-(1 - c) s} \dd s
		\right\rvert \\
		&\qquad \le \begin{cases}
			(\mu - \epsilon _c) \left\lvert v (r) - \expo ^\nu \right\rvert \expo ^{- (2 - c) (t - r)}, & r < t, \\
			(1 - \epsilon _c) \left\lvert v (r) - \expo ^\nu \right\rvert, & r \ge t.
		\end{cases}
	\end{align*}
	Thus
	\begin{align}
		\notag
		y _c (t) &\le E _c (t) + (\mu - \epsilon _c) \int _0 ^t \left\lvert v (r) - \expo ^\nu \right\rvert \expo ^{-2 t + 2 r} \dd r \\
		\notag
		&\qquad + (1 - \epsilon _c) \int _t ^\infty \left\lvert v (r) - \expo ^\nu \right\rvert \expo ^{-c t + c r} \dd r \\
		\label{eqn:yct}
		&= E _c (t) + (\mu - \epsilon _c) y _2 (t) + (1 - \epsilon _c) y _c (t).
	\end{align}
	Rearranging \eqref{eqn:yct} leads us to
	\begin{align*}
		& \epsilon _c (y _c + y _2) (t) \le E _c (t) + \mu y _2 (t) \\
		\implies & \frac{\epsilon _c (2 - c)}{\mu} (y _c + y _2) (t) \le \frac{2 - c}{\mu}  E _c (t) + (2 - c) y _2 (t).
	\end{align*}
	Combining it with \eqref{eq:diff-ineq} we obtain
	\begin{align*}
		(y _c + y _2)' (t) + \left(c + \frac{\expo ^{-\nu}}{2} (1 - c) \right) (y _c + y _2) (t) \le 2 E _c (t).
	\end{align*}
	Recall that $c' = c + \frac12 (1 - c) \expo ^{-\nu}$. We have
	\begin{align*}
		\frac{\dd}{\dd t} \left[(y _c + y _2) (t) \expo ^{c' t}\right] \le 2 E _c (t) \expo ^{c' t}.
	\end{align*}
	Upon integration this inequality from $T _c$ to $t$, we get for all $t \ge T _c$ that
	\begin{align*}
		(y _c + y _2) (t) & \le (y _c + y _2) (T _c) \expo ^{-c' (t - T _c)} + 2 \int _{T _c} ^t E _c (s) \expo ^{-c' (t - s)} \dd s.
	\end{align*}
	Note that \eqref{eq:e-ct-decay} yields
	\begin{align*}
		(y _c + y _2) (T _c) \expo ^{-c' (t - T _c)} \le y _c (0) \expo ^{-c T _c} \expo ^{-c' (t - T _c)} \le \expo^{(c' - c) T _c} y _c (0) \expo ^{-c't}.
	\end{align*}
	The right hand side also dominates $(y _c + y _2) (t)$ for $t \le T _c$. Indeed, we have from \eqref{eq:e-ct-decay} that
	\begin{align*}
		(y _c + y _2) (t) \le y _c (0) \expo ^{ct} = y_c (0) \expo^{-c T _c} \expo^{c (T _c - t)}
		& \le y _c (0) \expo ^{-c T _c} \expo^{c' (T _c -t)} \\
		&= y _c (0) \expo ^{-c' t} \expo ^{(c' - c) T _c}.
	\end{align*}
	Next, notice that for $c \in (0, 1)$, $(c' - c) T _c$ is uniformly bounded by
	\begin{align*}
		(c' - c) T _c \le C \log ^2 \left(\frac2{1 - c}\right) \cdot \frac{\expo ^{-\nu}}2 (1 - c) \le C.
	\end{align*}
	As for the error term, we have
	\begin{align*}
		\int _{T _c} ^t E _c (s) \expo ^{-c' (t - s)} \dd s &\le \int _{T _c} ^t \frac C{\bar c - c} \expo ^{-\bar c s} \expo ^{-c' t + c' s} \dd s \\
		&\le
		\begin{cases}
			\frac C{(\bar c - c) (\bar c - c')} \expo ^{-c' t - (\bar c - c') T _c}, & c' < \bar c, \\
			\frac {C}{\bar c - c} (t - T _c) \expo ^{-\bar c t}, & c' = \bar c, \\
			\frac C{(\bar c - c) (c' - \bar c)} \expo ^{-\bar c t}, & c' > \bar c,
		\end{cases}
	\end{align*}
	from which the advertised estimate \eqref{eq:yc_exponential decay} follows. 
\end{proof}

\begin{lemma}[Lipschitzness of the function $\varphi_c$]
	\label{lem:lipschitz-phic}
	Denote $\epsilon _c = \frac{\mu \expo ^{-\nu}}{2 (2 - c)} (1 - c) \in (0,1)$, then $\varphi _c$ is $(\mu - \epsilon _c)$-Lipschitz on $[1, \infty)$, and $(1 - \epsilon _c)$-Lipschitz on $[\expo ^\nu - \epsilon _c / \mu, +\infty)$.
\end{lemma}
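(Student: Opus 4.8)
The plan is to reduce both assertions to bounding the derivative $\varphi_c'$ at a single point. Differentiating the integral representation $\varphi_c(x)=\mu\int_0^1 x^u u^{-c}\,\dd u$ (which follows from the definition of $\varphi_c$ via the substitution $u=\expo^{-s}$) gives $\varphi_c'(x)=\mu\int_0^1 x^{u-1}u^{1-c}\,\dd u>0$ and $\varphi_c''(x)=\mu\int_0^1(u-1)x^{u-2}u^{1-c}\,\dd u<0$ for every $x>0$; hence $\varphi_c'$ is positive and strictly decreasing on $(0,\infty)$, so $\varphi_c$ is increasing and its optimal Lipschitz constant on any interval $[a,\infty)$ is exactly $\varphi_c'(a)$. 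It therefore suffices to prove
\begin{equation*}
\varphi_c'(1)\le\mu-\epsilon_c
\qquad\text{and}\qquad
\varphi_c'\!\left(\expo^\nu-\tfrac{\epsilon_c}\mu\right)\le 1-\epsilon_c ,
\end{equation*}
the second point being admissible since $\epsilon_c/\mu<1/4<1\le\expo^\nu$.

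The first bound is immediate: $\varphi_c'(1)=\mu\int_0^1 u^{1-c}\,\dd u=\mu/(2-c)$, and $\mu/(2-c)\le\mu-\epsilon_c$ is equivalent to $\epsilon_c\le\mu(1-c)/(2-c)$, which holds because $\epsilon_c=\frac{\mu(1-c)\expo^{-\nu}}{2(2-c)}$ and $\expo^{-\nu}\le1\le2$.

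For the second bound I would first estimate $\varphi_c'(\expo^\nu)$ and then absorb the shift to $\expo^\nu-\epsilon_c/\mu$. Integrating by parts in $u$ gives $\varphi_c'(\expo^\nu)=\frac\mu\nu-\frac{(1-c)\expo^{-\nu}}\nu\,\varphi_c(\expo^\nu)$. Combining this with the lower bound $\varphi_c(\expo^\nu)\ge\varphi(\expo^\nu)+\frac{\mu c}{1-c}$ — obtained from $\expo^{\nu u}\ge1$ applied to the difference $\varphi_c-\varphi$, together with the facts $\varphi(\expo^\nu)=\mu\frac{\expo^\nu-1}\nu=\expo^\nu$ and $\mu\expo^{-\nu}=\mu-\nu$ established in the proof of Lemma~\ref{lem:convergence-v} — yields after simplification
\begin{equation*}
\varphi_c'(\expo^\nu)\le 1-(1-c)\,\frac\beta\nu,
\qquad \beta:=\nu-\mu+1=1-\tfrac\nu{\expo^\nu-1}\in(0,1).
\end{equation*}
For the shift, the elementary bound $\lvert\varphi_c''(x)\rvert=\mu\int_0^1(1-u)x^{u-2}u^{1-c}\,\dd u\le\varphi_c'(x)/x$ gives $(\log\varphi_c')'\ge-1/x$, so integrating from $\expo^\nu-\epsilon_c/\mu$ to $\expo^\nu$ produces $\varphi_c'(\expo^\nu-\epsilon_c/\mu)\le\varphi_c'(\expo^\nu)/(1-q)$ with $q:=\epsilon_c/(\mu\expo^\nu)=\frac{(1-c)\expo^{-2\nu}}{2(2-c)}<1$. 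Chaining the two displays, $\varphi_c'(\expo^\nu-\epsilon_c/\mu)\le1-\epsilon_c$ follows as soon as $(1-c)\frac\beta\nu\ge\epsilon_c+q$; substituting the definitions and the relation $\mu=\nu\expo^\nu/(\expo^\nu-1)$, this inequality is equivalent to the purely elementary
\begin{equation*}
2(2-c)\,(\expo^\nu-1-\nu)\ge\nu^2+\nu\,\expo^{-\nu}\,(1-\expo^{-\nu}),
\end{equation*}
which, since its left side decreases in $c$, need only be checked at $c=\nu\wedge1$.

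The main obstacle is precisely this final single-variable inequality in $\nu$. Both sides vanish to second order as $\nu\to0^+$ and the inequality is essentially tight there, and it is tight again at $\nu=1$ — exactly the threshold $\mu=1+\frac1{\expo-1}$ appearing in Theorem~1 — so no crude estimate will suffice. I would establish it by expanding both sides as power series in $\nu$ and comparing coefficients on $(0,1]$, and by a direct monotonicity argument in $\nu$ on $[1,\infty)$ (where the left side grows like $2\expo^\nu$ and dominates with room to spare). Everything else in the argument is routine bookkeeping.
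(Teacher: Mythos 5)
Your reduction to bounding $\varphi_c'$ at the left endpoint of each interval is exactly right (the paper does the same), and your first bound $\varphi_c'(1)=\mu/(2-c)\le\mu-\epsilon_c$ matches the paper's argument verbatim. For the second bound, however, your route is genuinely different and, as written, incomplete. The integration by parts, the lower bound $\varphi_c(\expo^\nu)\ge\varphi(\expo^\nu)+\frac{\mu c}{1-c}$ (valid for $c\ge0$, which is the only case needed in the paper), and the shift estimate via $(\log\varphi_c')'\ge-1/x$ all check out; but they funnel the entire lemma into the scalar inequality
\begin{align*}
2\,(2-c)\,(\expo^\nu-1-\nu)\ \ge\ \nu^2+\nu\,\expo^{-\nu}\,(1-\expo^{-\nu}),
\end{align*}
which you explicitly leave unproved. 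This is not a formality: as you yourself observe, both sides agree to second order as $\nu\to0^+$ (at $c=\nu$ the margin is only $\tfrac{7}{6}\nu^3+O(\nu^4)$), and the right-hand side expands as an alternating series, so ``compare coefficients'' is not a routine step --- the coefficients of the difference are not termwise of one sign, and closing this requires genuine additional work. Until that inequality is established, the second Lipschitz claim is not proved.

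The paper sidesteps this obstacle entirely with a different comparison: it bounds $\partial_c\varphi_c'(x)\ge\frac{\mu}{(2-c)^2\,x}$ and integrates in $c$ up to $1$, giving $\varphi_c'(x)\le\frac{\varphi(x)}{x}-\frac{1-c}{2-c}\cdot\frac{\mu}{x}$; at $x=\expo^\nu$ the fixed-point identity $\varphi(\expo^\nu)=\expo^\nu$ then yields $\varphi_c'(\expo^\nu)\le1-\frac{1-c}{2-c}\,\mu\,\expo^{-\nu}=1-2\epsilon_c$ with no residual inequality to verify, and the crude bound $\varphi_c''\ge-\mu$ costs exactly $\epsilon_c$ over $[\expo^\nu-\epsilon_c/\mu,\expo^\nu]$. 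The definition of $\epsilon_c$ is tailored so that these two steps land precisely on $1-\epsilon_c$. To repair your argument, either carry out the proof of your displayed inequality in full, or replace the integration-by-parts step by this comparison with $\varphi_1'(x)=\varphi(x)/x$.
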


\begin{proof}
	We remark that $\varphi _c$ can be computed and expressed using Euler's Gamma function, but we will not need the exact form.
	Its derivative
	\begin{align*}
		\varphi _c ' (x) = \mu \int _0 ^\infty \expo ^{-s} x ^{\expo ^{-s} - 1} \expo ^{- (1 - c) s} \dd s = \mu \int _0 ^\infty x ^{\expo ^{-s} - 1} \expo ^{- (2 - c) s} \dd s
	\end{align*}
	is positive and decreasing in $[1, \infty)$. Thus for all $x\ge 1$,
	\begin{align*}
		\varphi _c ' (x) \le \varphi _c ' (1) = \mu \int _0 ^\infty \expo ^{- (2 - c) s} \dd s = \frac{\mu}{2 - c} \le \mu - \epsilon _c.
	\end{align*}
	Moreover, the partial derivative of $\varphi' _c$ with respect to $c$ reads as
	\begin{align*}
		&\partial _c \varphi ' _c (x) = \mu \int _0 ^\infty x ^{\expo ^{-s} - 1} \expo ^{- (2 - c) s} s \dd s \ge \frac\mu x \int _0 ^\infty s \expo ^{- (2 - c) s} \dd s = \frac{1}{(2 - c) ^2} \frac{\mu}{x},
	\end{align*}
	hence for any $c \le 1$ it holds
	\begin{align*}
		\varphi' _c (x) \le \varphi' _1 (x) - \frac\mu x \int _{c} ^1 \frac1{(2 - t) ^2} \dd t = \frac{\varphi (x)}x - \frac{1 - c}{2 - c} \cdot \frac\mu x.
	\end{align*}
	In particular,
	\begin{align*}
		\varphi _c ' (\expo ^\nu) \le 1 - \frac{1 - c}{2 - c} \cdot \frac\mu{\expo ^\nu} = 1 - 2 \epsilon _c < 1.
	\end{align*}
	On the other hand, for $x \ge 1$ we have
	\begin{align*}
		\varphi _c '' (x) = -\mu \int _0 ^\infty \expo ^{-s} (1 - \expo ^{-s}) x ^{\expo ^{-s} - 2} \expo ^{- (1 - c) s} \dd s &\ge -\mu \int _0 ^\infty \expo ^{-s} (1 - \expo ^{-s}) \expo ^{- (1 - c) s} \dd s \\
		&= -\frac\mu{(2 - c) (3 - c)} \ge -\mu.
	\end{align*}
	Therefore, if $x \in [\expo ^\nu - \epsilon _c / \mu, \expo ^\nu]$, then
	\begin{align*}
		\varphi _c ' (x) \le \varphi _c ' (\expo ^\nu) + \mu (\expo ^\nu - x) \le 1 - 2 \epsilon _c + \epsilon _c = 1 - \epsilon _c.
	\end{align*}
	If $x \ge \expo ^\nu$, then $\varphi _c ' (x) \le \varphi _c ' (\expo ^\nu) \le 1 - 2 \epsilon _c$. We thus conclude that the function $\varphi _c$ is $(1 - \epsilon _c)$-Lipschitz on $[\expo ^\nu - \epsilon _c / \mu, +\infty)$.
\end{proof}

Lemma \ref{lem:finer} provides an iteration scheme to improve the decay rate. In the following proposition, we will use Lemma \ref{lem:finer} to bootstrap the decay rate from lemma \ref{lem:crude} to a sharper exponential decay rate.

\begin{proposition}
	\label{prop:decay-v}
	If $\nu < 1$, then there exists a constant $C > 0$ depending on $\mu$ such that
	\begin{align*}
		y _2 (t) + \left\lvert v (t) - \expo ^\nu \right\rvert + |v' (t)| \le C \expo ^{-\nu t}.
	\end{align*}
	If $\nu \ge 1$, then there exist constants $C, K > 0$ depending on $\mu$ such that
	\begin{align*}
		y _2 (t) + \left\lvert v (t) - \expo ^\nu \right\rvert + |v' (t)| \le C \langle t \rangle ^K \expo ^{-t},
	\end{align*}
	where $\langle t \rangle \coloneqq \sqrt{1+t^2}$ denotes the usual Japanese bracket shorthand.
\end{proposition}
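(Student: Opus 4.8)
The plan is to bootstrap the one‑step improvement of Lemma~\ref{lem:finer} starting from the crude decay of Lemma~\ref{lem:crude}, splitting according to whether $\nu<1$ or $\nu\ge1$. Write $\beta=\tfrac12\expo^{-\nu}$, $\lambda=1-\beta\in(0,1)$, so that $c\mapsto c'=c+\beta(1-c)=1-\lambda(1-c)$ is strictly increasing with unique fixed point $1$, and recall $\bar c=\nu\wedge1$. Lemma~\ref{lem:crude} gives $v(t)-\expo^\nu\to0$ faster than any negative power of $t$, hence $y_0(0)=\int_0^\infty|v(s)-\expo^\nu|\,\dd s<\infty$ and Lemma~\ref{lem:finer} applies with $c=0$; moreover any bound $y_c(t)+y_2(t)\le A\expo^{-at}$ obtained along the way yields $|v(t)-\expo^\nu|\le|e_2(t)-\expo^{-t}\varphi(\expo^{\nu\expo^{-t}})|+\mu\,y_2(t)\le CA\expo^{-at}$ by \eqref{eq:diff-v-2} and \eqref{eq:e2_improved}, which makes $y_{c'}(0)$ finite for every $c'<a$ and so licenses the next application. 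For the $v'$ estimate, differentiating \eqref{eq:v-error} gives $v'(t)=e_2'(t)+e_2(t)+(\mu-1)v(t)-\mu\int_0^t[v(s)]^{\expo^{-t+s}}\expo^{-2(t-s)}\log v(s)\,\dd s$; subtracting the identity $(\mu-1)\expo^\nu=\mu\nu\int_0^\infty\expo^{\nu\expo^{-u}}\expo^{-2u}\,\dd u$ (a consequence of $\nu\expo^\nu=\mu(\expo^\nu-1)$) and using the Lipschitz bounds for $x\mapsto x^{\expo^{-u}}$ and $\log x$ on $[1,\expo^\mu]$ yields $|v'(t)|\le C\bigl(\expo^{-\bar c t}+|v(t)-\expo^\nu|+y_2(t)\bigr)$, so the $v'$ bound is subordinate to the other two.

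When $0<\mu<1+\tfrac1{\expo-1}$, i.e.\ $\nu<1$, we have $\bar c=\nu<1$. Since $c\mapsto c'$ increases toward $1$, after a number of steps depending only on $\mu$ one reaches a stage at which the admissible $c$'s include some $c$ with $\tfrac{\nu-\beta}{1-\beta}<c<\nu<c'$; feeding such a $c$ into Lemma~\ref{lem:finer} lands in its third branch ($c'>\bar c$) and produces $y_c(t)+y_2(t)\le C\expo^{-\nu t}$. Hence $|v(t)-\expo^\nu|\le C\expo^{-\nu t}$ (using $\expo^{-t}\le\expo^{-\nu t}$ in the remainder), and $|v'(t)|\le C\expo^{-\nu t}$ by the formula above.

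When $\mu\ge1+\tfrac1{\expo-1}$, i.e.\ $\nu\ge1$, the situation is genuinely delicate: now $\bar c=1$, so $c'<1$ at every step, the iteration never overshoots, and iterating it alone yields only rates $a_k\uparrow1$ with $1-a_k\asymp\lambda^k$ accompanied by deteriorating constants. To recover the exact exponent $\expo^{-t}$ with only a polynomial loss I would pass to the linearization of the Volterra equation of Lemma~\ref{lem:integral} about the constant solution $v\equiv\expo^\nu$: with $w=v-\expo^\nu$ one gets $w=\psi+K\ast w+R[w]$, where $\psi(t)=e_2(t)-\expo^{-t}\varphi(\expo^{\nu\expo^{-t}})$ decays at rate $\ge1$ by \eqref{eq:e2_improved} and the boundedness of $\varphi$, the kernel is $K(u)=\mu\expo^{-\nu}\expo^{\nu\expo^{-u}}\expo^{-2u}$, and $R[w]$ is a quadratic remainder that is negligible once $w$ is small (which it is, by Lemma~\ref{lem:convergence-v}). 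The point is that $\nu\expo^\nu=\mu(\expo^\nu-1)$ forces $\widehat K(-1)=\int_0^\infty K(u)\expo^{u}\,\dd u=1$, while $\widehat K$ is real, finite, and strictly decreasing on $(-2,\infty)$, so $\widehat K(z)<1$ for $\mathrm{Re}\,z>-1$; consequently the resolvent of $K$ decays at exactly the rate $\expo^{-t}$ up to a polynomial prefactor, and $w=\psi+(\text{resolvent})\ast\psi$ gives $y_2(t)+|v(t)-\expo^\nu|\le C\langle t\rangle^N\expo^{-t}$ for some $N=N(\mu)$; the $v'$ bound follows as above.

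The crux is exactly this critical case: one must produce the borderline rate $\expo^{-t}$, rather than $\expo^{-(1-\eps)t}$ for every $\eps>0$, and with a true power of $\langle t\rangle$ rather than a $t^{O(\log t)}$ loss. A naive iteration of Lemma~\ref{lem:finer} is defeated by a trade‑off—a residual rate loss if the per‑step sacrifices $\theta_k$ are bounded below, superpolynomially growing constants if $\theta_k\to0$—so the sharp statement hinges on the exact identity $\widehat K(-1)=1$ (equivalently, on $\expo^\nu$ being a \emph{neutral} fixed point of $\varphi$, $\varphi_1'(\expo^\nu)=1$), which is what pins the resolvent's decay at the critical exponent.
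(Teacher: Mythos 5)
Your treatment of the case $\nu<1$ is essentially the paper's: iterate Lemma \ref{lem:finer} from $c=0$ until some admissible $c$ satisfies $c<\nu<c'$, land in the branch $c'>\bar c$, and convert the decay of $y_2$ into decay of $|v-\expo^\nu|$ and $|v'|$ via \eqref{eq:diff-v-2} and the differentiated form of \eqref{eq:v-error}; your identity $(\mu-1)\expo^\nu=\mu\nu\int_0^\infty \expo^{\nu\expo^{-u}}\expo^{-2u}\,\dd u$ checks out. For $\nu\ge 1$ you take a genuinely different route. Your diagnosis is right that the naive iteration gives rates $a_k\uparrow 1$ with deteriorating constants, but the paper salvages exactly that iteration: it differentiates $y_c(0)$ in $c$, obtaining $\frac{\dd}{\dd c}y_c(0)=\int_0^\infty y_c(t)\,\expo^{ct}\,\dd t\le \frac{N}{1-c}\,y_c(0)+\frac{C}{(1-c)^3}$ from Lemma \ref{lem:finer}, hence $y_c(0)\le C(1-c)^{-N}$ — i.e.\ the ``deteriorating constants'' grow only polynomially in $(1-c)^{-1}$ — and then optimizes $c=1-\frac1t$ in $y_2(t)\le C(1-c)^{-N}\expo^{-ct}$ to get $Ct^N\expo^{-t}$. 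You instead linearize the Volterra equation \eqref{eq:diff-v-0} about $\expo^\nu$ and invoke resolvent/renewal theory; your kernel $K(u)=\mu\,\expo^{-\nu}\expo^{\nu\expo^{-u}}\expo^{-2u}$ is the correct linearization, and the identity $\int_0^\infty K(u)\,\expo^{u}\,\dd u=\expo^{-\nu}\varphi(\expo^\nu)=1$ (equivalently $\varphi_1'(\expo^\nu)=1$) is exactly what pins the critical exponent at $1$. This approach is viable and, pushed through the renewal theorem (note $\tilde\psi(t)=\psi(t)\expo^t$ tends to a nonzero constant, so $w(t)\sim c\,t\,\expo^{-t}$), would even yield the sharper prefactor $N=1$. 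The one step you assert rather than prove is the exponentially weighted decay of the resolvent when the characteristic root sits exactly on the critical line $\operatorname{Re}z=-1$: this needs the weighted Paley--Wiener theorem after subtracting the residue at $z=-1$ (or the renewal-equation reformulation), together with the a priori sub-critical rate $>\tfrac12$ for $w$ (available from your iteration) to make the quadratic remainder $R[w]$ decay faster than $\expo^{-t}$. That is standard but not free; the paper's derivative-in-$c$ trick avoids it entirely at the cost of an unspecified $N$.
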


\begin{proof}
	We first claim that for any $\alpha \le \nu$ with $\alpha < 1$, it holds that
	\begin{align*}
		\left\lvert v (s) - \expo ^\nu \right\rvert \le C _\alpha \expo ^{-\alpha t}.
	\end{align*}
	Indeed, given any $\tilde c < \min \{c', \bar c\}$, we have
	\begin{align*}
		y _{\tilde c} (0) = \int _0 ^\infty \left\lvert v (s) - \expo ^\nu \right\rvert \expo ^{\tilde c s} \dd s &= \int _0 ^\infty (- y _c ' (s) - c y _c (s)) \expo ^{\tilde c s} \dd s \\
		&= y _c (0) + \int _0 ^\infty (\tilde c - c) y _c (s) \expo ^{\tilde c s} \dd s \\
		&\le y _c (0) + Cy _c (0) \frac{\tilde c - c}{c' - \tilde c} + \begin{cases}
		\frac {C (\tilde c - c)}{(\bar c - c) (\bar c - c') (c' - \tilde c)},  & c' < \bar c \\
		\frac {C (\tilde c - c) }{(\bar c - c) (\bar c - \tilde c) (c' - \tilde c) },  & c' = \bar c \\
		\frac {C (\tilde c - c)}{(\bar c - c) (c' - \bar c) (\bar c - \tilde c)}, & c' > \bar c,
	\end{cases}
	\end{align*}
	which is convergent. We now iterate this using Lemma \ref{lem:finer}. Clearly, $y _0 (0) < \infty$ in view of Lemma \ref{lem:crude}. After finitely many steps, $c' > \alpha$, and we have
	\begin{align*}
		y _{c'} (t) + y _2 (t) \le C \expo ^{-\alpha t}.
	\end{align*}

	From this claim, we proved $y _2 (t) \le C \expo ^{-\nu t}$ for $\nu < 1$. For the case $\nu \ge 1$, we know that $y _c (0) < \infty$ for all $c < 1$. To use Lemma \ref{lem:finer}, we need to quantify the size of $y _c (0)$, which is actually the Laplace transform of $|v - \expo ^{\nu}|$ evaluated at $-c$. Although it can be estimated from the above iteration, we use the following strategy instead. Taking the derivative of $y_c(0)$ with respect to $c$ yields
	\begin{align*}
		\frac{\dd}{\dd c} y _c (0) &= \frac{\dd}{\dd c} \int _0 ^\infty \left\lvert v (t) - \expo ^\nu \right\rvert \expo ^{c t} \dd t  = \int _0 ^\infty \left\lvert v (t) - \expo ^\nu \right\rvert t \expo ^{c t} \dd t \\
		&= \int _0 ^\infty \left(-y _c' (t) - c y _c (t)\right) t \expo ^{c t} \dd t = \int _0 ^\infty y _c (t) (1 + c t - c t) \expo ^{c t} \dd t \\
		&= \int _0 ^\infty y _c (t) \expo ^{c t} \dd t.
	\end{align*}
	Using Lemma \ref{lem:finer} and bearing in mind that $\nu \ge 1$ implies that $\bar{c} = 1 > c'$, we can control it by
	\begin{align*}
		\frac{\dd}{\dd c} y _c (0) &\le \frac C{c' - c} y _c (0) + \frac{C}{(1 - c) (1 - c') (c' - c)} \\
		&\le \frac {K}{1 - c} y _c (0) + \frac {C}{(1 - c) ^3},
	\end{align*}
	whence $y _c (0) \le C (1 - c) ^{-K}$ by taking $K > 2$. Thus we also have
	\begin{align*}
		y _2 (t) \le C y _c (0) \expo ^{-c' t} + \frac C{(\bar c - c) (\bar c - c')} \expo ^{-c' t } \le C (1 - c) ^{-K} \expo ^{-c t}.
	\end{align*}
	This is true for all $c < 1$, so for $t > 1$ we can take $c = 1 - \frac1t$ and obtain
	\begin{align*}
		y _2 (t) \le C t ^K \expo ^{-t}.
	\end{align*}
	In summary, we find $y _2$ has the desired decay rate.

	To get the desired estimate for the convergence rate of $v (t)$, we employ \eqref{eq:diff-v-2}:
	\begin{align*}
		\left\lvert v (t) - \expo ^\nu \right\rvert \le \left\lvert e _2 (t) - \expo ^{-t} \varphi (\expo ^{\nu \expo ^{-t}})\right\rvert + \mu y _2 (t)
		& \le \begin{cases}
			C \expo ^{-\nu t}, & \nu < 1, \\
			C \langle t \rangle ^K \expo ^{-t}, & \nu \ge 1.
		\end{cases}.
	\end{align*}
	For the derivative estimate, we differentiate $v$ using \eqref{eq:v-error} and get
	\begin{align*}
		v' (t) &= e _2 ' (t) + \mu v (t) - \mu \int _0 ^t [v (s)] ^{\expo ^{-t + s}} \expo ^{-t + s} \dd s \\
		&\qquad - \mu \int _0 ^t [v (s)] ^{\expo ^{-t + s}} \log (v (s)) \expo ^{-2t + 2s} \dd s \\
		&= e _2 ' (t) + (\mu - 1) v (t) + \mu e _2 (t) - \mu \int _0 ^t [v (s)] ^{\expo ^{-t + s}} \log (v (s)) \expo ^{-2t + 2s} \dd s,
	\end{align*}
	in which
	\begin{align*}
		e _2' (t) = -f _0' (t) - f _0 (0) \exp \left(
			-\int _0 ^t \log v (s) \dd s
		\right) \log v (t) = \cO (\expo ^{-(1 \wedge \nu) t}).
	\end{align*}
	Note that $x \mapsto x ^{\alpha} \log x$ is Lipschitz on $[1, \expo ^\mu]$ uniformly for $\alpha \in [0, 1]$, hence
	\begin{align*}
		& \left\lvert
			\mu \int _0 ^t [v (s)] ^{\expo ^{-t + s}} \log (v (s)) \expo ^{-2t + 2s} \dd s - \mu \int _0 ^t \nu \expo ^{\nu \expo ^{-t + s}} \expo ^{-2t + 2s} \dd s
		\right\rvert \\
		& \qquad \le C \mu \int _0 ^t \left\lvert v (s) - \expo ^\nu \right\rvert \expo ^{-2t + 2s} \dd s = C y _2 (t).
	\end{align*}
	Therefore,
	\begin{align*}
		|v' (t)| &\le |e _2' (t)| + (\mu - 1) |v (t) - \expo ^\nu| + \mu e _2 (t) + C y _2 (t) \\
		&\qquad + (\mu - 1) \expo ^\nu - \mu \nu \int _0 ^t \expo ^{\nu \expo ^{-s}} \expo ^{-2 s} \dd s \\
		&\le |e _2' (t)| + (\mu - 1) |v (t) - \expo ^\nu| + \mu e _2 (t) + C y _2 (t) + \mu \nu \int _t ^\infty \expo ^{\nu \expo ^{-s}} \expo ^{-2 s} \dd s \\
		& \le \begin{cases}
			C \expo ^{-\nu t}, & \nu < 1, \\
			C \langle t \rangle ^K \expo ^{-t}, & \nu \ge 1.
		\end{cases}
	\end{align*}
	This completes the proof.
\end{proof}

As a corollary, we deduce the following convergence rate of $p _1 (t) \to \mu - \nu$ as $t \to \infty$.

\begin{corollary}
	\label{cor:p1-convergence}
	For $t \ge 0$, we have
	\begin{align*}
		|p _1 (t) - \mu + \nu| \le \begin{cases}
			C \expo ^{-\nu t}, & \nu < 1, \\
			C \langle t \rangle^K \expo ^{-t}, & \nu \ge 1.
		\end{cases}.
	\end{align*}
\end{corollary}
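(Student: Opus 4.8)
The plan is to express $\mu - p _1 (t)$ directly in terms of the auxiliary function $v$ and its derivative, and then invoke the decay estimates from the preceding Proposition. Starting from the definition \eqref{eq:auxillary}, we have $\log v (t) = \expo ^{-t}\, H (t)$ with $H$ as in \eqref{eq:H}. Differentiating in $t$ and using $H' (t) = \expo ^t\, (\mu - p _1 (t))$ together with $\expo ^{-t}\, H (t) = \log v (t)$ gives the identity
\begin{align*}
 \frac{v' (t)}{v (t)} = - \log v (t) + \mu - p _1 (t), \qquad \text{equivalently}, \qquad \mu - p _1 (t) = \frac{v' (t)}{v (t)} + \log v (t).
\end{align*}
This is essentially the computation already carried out in the proof of Lemma \ref{lem:integral}; in particular, at equilibrium $v' = 0$ and $\log v = \nu$, which is consistent with the advertised limit $p _1 (t) \to \mu - \nu$.

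Next, I would subtract the equilibrium value and estimate by the triangle inequality:
\begin{align*}
 |p _1 (t) - \mu + \nu| = \left\lvert \frac{v' (t)}{v (t)} + \log v (t) - \nu \right\rvert \le \frac{|v' (t)|}{v (t)} + |\log v (t) - \nu|.
\end{align*}
Recall the bound $1 \le v (t) \le \expo ^\mu$ established right after Lemma \ref{lem:integral}: the first term is therefore at most $|v' (t)|$. For the second term, the function $\log$ is $1$-Lipschitz on $[1, \expo ^\mu]$ (its derivative $1/x$ is bounded by $1$ there), and both $v (t)$ and $\expo ^\nu$ lie in this interval, hence $|\log v (t) - \nu| = |\log v (t) - \log \expo ^\nu| \le |v (t) - \expo ^\nu|$.

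Combining the two estimates yields $|p _1 (t) - \mu + \nu| \le |v' (t)| + |v (t) - \expo ^\nu|$, and the preceding Proposition bounds the right-hand side by $C\, \expo ^{-\nu t}$ when $\nu < 1$ and by $C\, \langle t \rangle ^N\, \expo ^{-t}$ when $\nu \ge 1$, which is exactly the claimed bound. There is no real obstacle here: the only step requiring a little care is deriving the clean identity for $\mu - p _1 (t)$ in terms of $v$ and $v'$, after which the conclusion is immediate from the already-proven decay of $v$ and $v'$.
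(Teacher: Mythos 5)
Your proposal is correct and follows essentially the same route as the paper: both rest on the identity $\mu - p_1(t) = \tfrac{v'(t)}{v(t)} + \log v(t)$ and then invoke the decay of $|v'(t)|$ and $|v(t) - \expo^{\nu}|$ from the preceding Proposition. You simply spell out the Lipschitz/triangle-inequality details that the paper leaves implicit, and those details are accurate (in particular, $v(t)\ge 1$ justifies both the bound on $|v'(t)|/v(t)$ and the $1$-Lipschitzness of $\log$ on the relevant interval).
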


\begin{proof}
	It suffices to notice that $\mu - p _1 (t)$ can be recovered from $v$ via
	\begin{align*}
		\mu - p _1 (t) = \frac{v' (t)}{v (t)} + \log v (t).
	\end{align*}
	Since both $v' (t) \xrightarrow{t \to \infty} 0$ and $v (t) \xrightarrow{t \to \infty} \expo ^\nu$ occur at this rate, the result follows.
\end{proof}

\subsection{Strong convergence of the ODE system}

In this subsection, we are ready to prove the various strong convergence results regarding the solution $\bp (t)$ of the system \eqref{eq:law_limit_repeat} towards the zero-truncated Poisson distribution $\bpb$ \eqref{eq:zero_truncated_poisson} as $t \to +\infty$. First, we show convergence in $\ell ^2$. Recall that
\begin{align*}
	\left\lVert \bp (t) - \bpb \right\rVert _{\ell ^2} ^2 = \sum _{n = 0} ^\infty \left( p _n (t) - \pb _n \right) ^2.
\end{align*}

\begin{theorem}\label{thm:2}
	Let $\mu > 1$. There exists constants $C$, $K$ depending on $\mu$ such that for all $t \ge 0$, it holds that
	\begin{align*}
		\left\lVert \bp (t) - \bpb \right\rVert _{\ell ^2} \le \begin{cases}
			C \expo ^{-\nu t}, & \nu < 1, \\
			C \langle t \rangle ^K \expo ^{-t}, & \nu \ge 1.
		\end{cases}.
	\end{align*}
\end{theorem}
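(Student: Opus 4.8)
The plan is to move to the unit circle and invoke Parseval's identity, thereby reducing the $\ell^2$ convergence to a pointwise estimate for the probability generating function that follows from the explicit formula of Lemma~\ref{lem:solution-to-pde} together with the quantitative decay of $v$ already in hand. Since $\sum_{n\ge0}p_n(t)=1$ with $0\le p_n(t)\le1$, the sequence $\bigl(p_n(t)\bigr)_{n\ge0}$ belongs to $\ell^2$, and so does $\overbar{\bf p}$; hence $G(t,\cdot)$ and $G_{\overbar{\bf p}}$ have $L^2$ boundary values on $\{\lvert z\rvert=1\}$ and
\[
\left\lVert {\bf p}(t)-\overbar{\bf p}\right\rVert_{\ell^2}^2
=\frac1{2\pi}\int_{-\pi}^{\pi}\bigl\lvert G(t,\expo^{i\theta})-G_{\overbar{\bf p}}(\expo^{i\theta})\bigr\rvert^2\,\dd\theta
\le\Bigl(\sup_{\theta}\bigl\lvert G(t,\expo^{i\theta})-G_{\overbar{\bf p}}(\expo^{i\theta})\bigr\rvert\Bigr)^2 ,
\]
so it suffices to bound the supremum. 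Writing $\zeta=1-\expo^{i\theta}$, so that $\lvert\zeta\rvert=2\lvert\sin(\theta/2)\rvert\le2$, $\mathrm{Re}\,\zeta=1-\cos\theta\ge0$, and $1-\zeta=\expo^{i\theta}$, the hypothesis $\lvert1-z\rvert\le1$ of Lemma~\ref{lem:solution-to-pde} holds with equality, and the remark after Lemma~\ref{lem:convergence-v} identifies the limit as $G_{\overbar{\bf p}}(1-\zeta)-1=-\frac\mu\nu\bigl(1-\expo^{-\nu\zeta}\bigr)=A_\infty\,\expo^{-\nu\zeta}$ with $A_\infty:=-\mu\zeta\int_0^\infty\expo^{\nu\zeta\expo^{-s}}\expo^{-s}\,\dd s$.

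Subtracting the formula of Lemma~\ref{lem:solution-to-pde} (with its dummy variable set to $\zeta$) from $G_{\overbar{\bf p}}(1-\zeta)$ and writing $A(t):=G(0,1-\zeta\expo^{-t})-1-\mu\zeta\int_0^t[v(s)]^{\zeta\expo^{-t+s}}\expo^{-t+s}\,\dd s$, one obtains the factorization
\[
G(t,1-\zeta)-G_{\overbar{\bf p}}(1-\zeta)=\bigl(A(t)-A_\infty\bigr)[v(t)]^{-\zeta}+A_\infty\bigl([v(t)]^{-\zeta}-\expo^{-\nu\zeta}\bigr),
\]
and I would estimate each factor uniformly in $\theta$. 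The elementary identity $\lvert1-\zeta\expo^{-t}\rvert^2=1-2\expo^{-t}(1-\expo^{-t})(1-\cos\theta)\le1$ shows that $G(0,1-\zeta\expo^{-t})$ is read off the series inside the closed unit disk, so only the first moment $\sum_n n\,p_n(0)=\mu$ enters there and $\lvert G(0,1-\zeta\expo^{-t})-1\rvert\le\mu\lvert\zeta\rvert\expo^{-t}\le2\mu\,\expo^{-t}$. Since $v(t),\expo^{\nu}\in[1,\expo^{\mu}]$ and $\mathrm{Re}\,\zeta\ge0$, the map $x\mapsto x^{-\zeta}$ is $2$-Lipschitz on $[1,\expo^{\mu}]$, whence $\lvert[v(t)]^{-\zeta}\rvert\le1$ and $\lvert[v(t)]^{-\zeta}-\expo^{-\nu\zeta}\rvert\le2\,\lvert v(t)-\expo^{\nu}\rvert$. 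For the integral in $A(t)$, the substitution $\sigma=t-s$ and the splitting $\int_0^\infty=\int_0^t+\int_t^\infty$ turn $\int_0^t[v(s)]^{\zeta\expo^{-t+s}}\expo^{-t+s}\,\dd s-\int_0^\infty\expo^{\nu\zeta\expo^{-s}}\expo^{-s}\,\dd s$ into
\[
\int_0^t\Bigl([v(t-\sigma)]^{\zeta\expo^{-\sigma}}-\expo^{\nu\zeta\expo^{-\sigma}}\Bigr)\expo^{-\sigma}\,\dd\sigma-\int_t^\infty\expo^{\nu\zeta\expo^{-\sigma}}\expo^{-\sigma}\,\dd\sigma ;
\]
the second term is $O(\expo^{-t})$, and for the first I would use that $x\mapsto x^{w}$ is $\lvert w\rvert\expo^{\mu}$-Lipschitz on $[1,\expo^{\mu}]$, together with $\lvert\zeta\expo^{-\sigma}\rvert\le2\expo^{-\sigma}$, to bound it by $C\int_0^t\expo^{-2(t-s)}\lvert v(s)-\expo^{\nu}\rvert\,\dd s=C\,y_2(t)$ — precisely the quantity controlled in the Proposition of Section~\ref{subsec:4.3}. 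Together with the uniform bound $\lvert A_\infty\rvert\le C$ (again because $\lvert\zeta\rvert\le2$ and $\nu<\mu$), these estimates yield
\[
\sup_{\theta}\bigl\lvert G(t,\expo^{i\theta})-G_{\overbar{\bf p}}(\expo^{i\theta})\bigr\rvert\le C\bigl(\expo^{-t}+y_2(t)+\lvert v(t)-\expo^{\nu}\rvert\bigr).
\]

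Finally, the Proposition of Section~\ref{subsec:4.3} bounds $y_2(t)+\lvert v(t)-\expo^{\nu}\rvert$ by $C\,\expo^{-\nu t}$ when $\nu<1$ and by $C\,\langle t\rangle^{N}\expo^{-t}$ when $\nu\ge1$, and in either case the stray term $\expo^{-t}$ is dominated (note $\expo^{-t}\le\expo^{-\nu t}$ for $\nu\le1$). Inserting this into the supremum bound and then into Parseval's identity gives $\lVert{\bf p}(t)-\overbar{\bf p}\rVert_{\ell^2}\le C\,\expo^{-\nu t}$ for $\nu<1$ and $\le C\,\langle t\rangle^{N}\expo^{-t}$ for $\nu\ge1$, as claimed. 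The main obstacle I anticipate is the bookkeeping required to keep every estimate uniform in $\theta$: the essential facts are that $\mathrm{Re}\,\zeta\ge0$ and $v\ge1$ force all the powers $[v(t)]^{\pm\zeta}$ and $[v(t-\sigma)]^{\zeta\expo^{-\sigma}}$ to remain bounded by constants depending only on $\mu$; that $G(0,\cdot)$ is only ever evaluated in the closed unit disk, so that no moment hypothesis beyond a finite mean is needed for that term; and, above all, that the integral remainder in $A(t)-A_\infty$ collapses to $C\,y_2(t)$, so that Theorem~\ref{thm:2} reduces to the quantitative decay of the auxiliary function $v$ already established.
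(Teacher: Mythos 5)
Your proposal is correct and follows essentially the same route as the paper's proof: Parseval's identity on the unit circle, the explicit formula of Lemma \ref{lem:solution-to-pde} evaluated at $z=1-\expo^{i\theta}$, and the reduction of the integral remainder to $C\,y_2(t)$ plus $O(\expo^{-t})$ terms, all controlled by the quantitative decay of $v$ from the Proposition of Section \ref{subsec:4.3}. The only difference is cosmetic (you factor the difference as $(A(t)-A_\infty)[v(t)]^{-\zeta}+A_\infty([v(t)]^{-\zeta}-\expo^{-\nu\zeta})$ rather than multiplying through by $[v(t)]^{z}$ as the paper does), and all the individual estimates coincide.
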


\begin{proof}
	We first recall the classical Parseval's identity:
	\begin{align*}
		\left\lVert \bp (t) - \bpb \right\rVert _{\ell ^2} ^2 = \frac1{2 \pi} \int _0 ^{2 \pi} \left\lvert
			\G (t, \expo ^{i \theta}) - \G _{\bpb} (\expo ^{i \theta})
		\right\rvert ^2 \dd \theta.
	\end{align*}
	By Lemma \ref{lem:solution-to-pde}, we have
	\begin{align*}
		(\G (t, 1 - z) - 1) [v (t)] ^z
		&= \G (0, 1 - z \expo ^{-t}) - 1 - \mu z \int _0 ^t [v (s)] ^{z \expo ^{-t + s}} \expo ^{-t + s} \dd s
	\end{align*}
	for all $z \in \mathbb C$ with $|z - 1| \le 1$.  Notice that
	\begin{align*}
		\lvert \G (0, 1 - z \expo ^{-t}) - 1 \rvert &\le \partial _z G (0, 1) \lvert z \rvert \expo ^{-t} \le C \expo ^{-t}, \\
		\left|
			\int _0 ^t [v (s)] ^{z \expo ^{-t + s}} \expo ^{-t + s} \dd s - \int _0 ^t \expo ^{\nu z \expo ^{-t + s}} \expo ^{-t + s} \dd s
		\right| &\le C |z| \int _0 ^t \lvert v (s) - \expo ^\nu \rvert \expo ^{-2t + 2s} \dd s \\
		&\le C y _2 (t), \\
		\left|
			\mu \int _0 ^t \expo ^{\nu z \expo ^{-t + s}} \expo ^{-t + s} \dd s - \varphi (\expo ^{\nu z})
		\right| &\le \expo ^{-t} \left|\varphi (\expo ^{\nu z \expo ^{-t}})\right| \le C \expo ^{-t}.
	\end{align*}
	On the other hand, we know for $z \in \mathbb C$ with $|z - 1| \le 1$ that
	\begin{align*}
		\left|
			(\G (t, 1 - z) - 1) [v (t)] ^z - (\G (t, 1 - z) - 1) \expo ^{\nu z}
		\right| \le C |v (t) - \expo ^\nu|.
	\end{align*}
	Assembling these estimates, we proved for $z \in \mathbb C$ with $|z - 1| \le 1$ that
	\begin{align*}
		\left|
			(G (t, 1 - z) - 1) \expo ^{\nu z} + z \varphi (\expo ^{\nu z})
		\right| \le \begin{cases}
			C \expo ^{-\nu t}, & \nu < 1, \\
			C \langle t \rangle ^K \expo ^{-t}, & \nu \ge 1.
		\end{cases}
	\end{align*}
	Since
	\begin{align*}
		G _{\bpb} (1 - z) = \frac1{\expo ^\nu - 1}\sum _{n = 1} ^\infty \frac{\nu ^n}{n!} (1 - z) ^n = \frac{\expo ^{\nu (1 - z)} - 1}{\expo ^\nu - 1} =
		 1 - \expo ^{-\nu z} z \varphi (\expo ^{\nu z}),
	\end{align*}
	the above implies uniform convergence of $\G (t, 1 - z)$ to $\G _{\bpb} (1 - z)$ for all $z \in \mathbb C$ with $|z - 1| \le 1$, which shows that $\bp (t)$ converges to $\bpb$ in $\ell ^2$ by Parseval's identity.
\end{proof}

Utilizing the tail estimate for the (zero-truncated) Poisson distribution, we can also establish the following convergence result in $\ell^1$.

\begin{corollary}
	\label{cor:2}
	Let $\mu > 1$. There exists constants $C$, $K$ depending on $\mu$ such that for all $t \ge 0$, it holds that
	\begin{align*}
		\left\lVert \bp (t) - \bpb \right\rVert _{\ell^1} \le \begin{cases}
			C \langle t \rangle ^\frac12 \expo ^{-\nu t}, & \nu < 1, \\
			C \langle t \rangle ^{K + \frac12} \expo ^{-t}, & \nu \ge 1.
		\end{cases}.
	\end{align*}
\end{corollary}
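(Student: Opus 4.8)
The plan is to bootstrap the $\ell^2$ estimate of Theorem~\ref{thm:2} into an $\ell^1$ estimate by splitting the series at a time-dependent cutoff and accepting a polynomial-in-$t$ loss. For $K=K(t)\in\mathbb N$ to be chosen, the triangle inequality together with Cauchy--Schwarz on the low-frequency block gives
\[
 \left\lVert {\bf p}(t)-\overbar{\bf p}\right\rVert_{\ell^1}
 \le \sqrt{K+1}\,\left\lVert {\bf p}(t)-\overbar{\bf p}\right\rVert_{\ell^2}
 +\sum_{n>K}p_n(t)+\sum_{n>K}\overbar p_n .
\]
The first term is handled directly by Theorem~\ref{thm:2}, and the last term is harmless: a ratio bound for the zero-truncated Poisson law \eqref{eq:modified_Poisson} gives $\sum_{n>K}\overbar p_n\le \frac{2}{\expo^\nu-1}\,\frac{\nu^{K+1}}{(K+1)!}$ once $K+1\ge 2\nu$, which decays faster than any exponential in $K$. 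Everything therefore reduces to a good uniform-in-time tail bound for ${\bf p}(t)$ itself.

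A uniform second-moment bound is free: identity \eqref{eq:evolution_of_E} in the proof of Theorem~\ref{thm:1}, namely $\frac{\dd}{\dd t}\mathcal{E}[{\bf p}]=-2\,\mathcal{E}[{\bf p}]+2\mu\,(\mu-p_1)$ with $\mathcal{E}$ as in \eqref{eq:energy}, was derived without using $\mu\le1$ and therefore holds for every $\mu>0$; combined with $0\le\mu-p_1(t)\le\mu$ and Gr\"onwall's inequality this yields $\sup_{t\ge0}\sum_n n^2 p_n(t)\le \mathcal{E}[{\bf p}(0)]+\mu+\mu^2\eqqcolon C_0<\infty$ whenever the initial variance is finite, hence $\sum_{n>K}p_n(t)\le C_0/(K+1)^2$. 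Inserting this and optimizing over $K$ already produces an $\ell^1$ bound with exponential (although not sharp-rate) decay.

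To reach the stated rate I would instead propagate an exponential moment of the initial datum. Setting $A_\lambda(t)\coloneqq\sum_n\lambda^n p_n(t)$ for a fixed $\lambda>1$ with $A_\lambda(0)<\infty$, a direct rearrangement of \eqref{eq:L_rewrite} (grouping the gain and loss terms) yields
\[
 A_\lambda'(t)=(\lambda-1)\,(\mu-p_1(t))\,A_\lambda(t)-(1-\lambda^{-1})\sum_n n\,\lambda^n p_n(t)+R_\lambda(t),
\]
where $R_\lambda(t)$ is bounded uniformly in $t$ by a constant depending only on $\lambda$. Since $\sum_n n\,\lambda^n p_n(t)\ge N_0\,(A_\lambda(t)-\lambda^{N_0})$ for every $N_0\in\mathbb N$, choosing $N_0>\mu\lambda$ makes the transport loss dominate the destabilizing term $(\lambda-1)(\mu-p_1)A_\lambda\le\mu(\lambda-1)A_\lambda$, so that $A_\lambda'(t)\le -c\,A_\lambda(t)+C$ for some $c,C>0$; Gr\"onwall then gives $\sup_{t\ge0}A_\lambda(t)<\infty$ and therefore $\sum_{n>K}p_n(t)\le C\,\lambda^{-K}$.

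With this bound in hand, taking $K=\lceil\beta t\rceil$ with $\beta\log\lambda\ge1$ makes the two high-frequency contributions bounded by $C\expo^{-t}$ plus a super-exponentially small term, while $\sqrt{K+1}\,\lVert {\bf p}(t)-\overbar{\bf p}\rVert_{\ell^2}\le C\langle t\rangle^{1/2}\expo^{-\nu t}$ when $\nu<1$ and $\le C\langle t\rangle^{N+1/2}\expo^{-t}$ when $\nu\ge1$; summing the three pieces gives the asserted bounds, the surplus $\langle t\rangle^{1/2}$ factor being exactly the price of the truncation. The step I expect to be the main obstacle is the exponential-moment propagation: one must verify carefully that the loss term $-(1-\lambda^{-1})\sum_n n\,\lambda^n p_n$, which carries the decisive extra factor of $n$, genuinely overwhelms the growth term $(\lambda-1)(\mu-p_1)A_\lambda$, which is what forces the comparison level $N_0$ to be taken large; once this is settled, the remaining estimates are routine.
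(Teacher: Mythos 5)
Your skeleton coincides with the paper's: truncate the series at a time-dependent level $K\sim t$, apply Cauchy--Schwarz on the finite block to invoke the $\ell^2$ bound of Theorem~\ref{thm:2} (this is the source of the $\langle t\rangle^{1/2}$ loss), and kill the tail of $\overbar{\bf p}$ with a Chernoff/ratio bound for the Poisson law. Where you diverge is in controlling $\sum_{n>K}p_n(t)$, and this is where a genuine gap appears. Your sharp-rate argument rests on propagating an exponential moment $A_\lambda(t)=\sum_n\lambda^n p_n(t)$, which requires $A_\lambda(0)<\infty$ for some $\lambda>1$. That hypothesis is not part of Corollary~\ref{cor:2} and is strictly stronger than the finite-variance assumption under which the paper's main theorem (and Theorem~\ref{thm:2}, whose proof uses only the first moment of ${\bf p}(0)$) operates; moreover, the term-by-term differentiation defining $A_\lambda'$ already presupposes finiteness of the exponential moment, so one cannot even start the Gr\"onwall argument without it. Your fallback via the uniform second-moment bound is valid but, as you optimize $K$, balances $\sqrt{K}\,\expo^{-\nu t}$ against $K^{-2}$ and lands at rate roughly $\expo^{-\frac45\nu t}$, short of the stated $\langle t\rangle^{1/2}\expo^{-\nu t}$. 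So as written, neither branch of your proposal establishes the corollary under its stated hypotheses.

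The missing idea is that no tail bound on ${\bf p}(t)$ is needed at all: since ${\bf p}(t)$ and $\overbar{\bf p}$ are both probability distributions, mass conservation gives
\begin{align*}
\sum_{n>K}p_n(t)+\sum_{n>K}\overbar p_n
=\sum_{n\le K}\bigl(\overbar p_n-p_n(t)\bigr)+2\sum_{n>K}\overbar p_n
\le \left\lVert {\bf p}(t)-\overbar{\bf p}\right\rVert_{\ell^1([0,K])}+2\sum_{n>K}\overbar p_n,
\end{align*}
so the tail of the difference is absorbed into the low-frequency block (already handled by Cauchy--Schwarz and Theorem~\ref{thm:2}) plus twice the explicit Poisson tail. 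This is exactly how the paper proceeds, and it closes your argument with no extra assumptions and no moment propagation. Your exponential-moment lemma is correct as a computation (the identity for $A_\lambda'$ and the choice $N_0>\lambda\mu$ both check out) and could be of independent interest, but for this corollary it is both insufficiently general and unnecessary.
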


\begin{proof}
	For $x \in \mathbb N$ to be specified later, we have
	\begin{align*}
		\left\lVert \bp (t) - \bpb \right\rVert _{\ell ^1} &\le \left\lVert \bp (t) - \bpb \right\rVert _{\ell ^1 ([0, x])} + \left\lVert \bp (t) - \bpb \right\rVert _{\ell ^1 ([x + 1, \infty))} \\
		&\le 2 \left\lVert \bp (t) - \bpb \right\rVert _{\ell ^1 ([0, x])} + 2 \left\lVert \bpb \right\rVert _{\ell ^1 ([x + 1, \infty))}
	\end{align*}
	The first term is easily controlled by the $\ell ^2$ norm:
	\begin{align*}
		\left\lVert \bp (t) - \bpb \right\rVert _{\ell ^1 ([0, x])} \le \sqrt x \left\lVert \bp (t) - \bpb \right\rVert _{\ell ^2}.
	\end{align*}
	The second term is amenable to explicit computations, leading us to
	\begin{align*}
		\left\lVert \bpb \right\rVert _{\ell ^1 ([x + 1, \infty))} = \sum _{n = x + 1} ^\infty \pb _n = \frac1{\expo ^\nu - 1} \sum _{n = x + 1} ^\infty \frac{\nu ^n}{n!} .
	\end{align*}
	Thanks to the Chernoff bound for the Poisson distribution, for $x \ge \nu$ it holds that
	\begin{align*}
		\sum _{n = x} ^\infty \frac{\nu ^n \expo ^{-\nu}}{n!} \le \frac{(\expo \nu) ^x \expo ^{-\nu}}{x ^x}.
	\end{align*}
	We know for our zero-truncated Poisson distribution that
	\begin{align*}
		\left\lVert \bpb \right\rVert _{\ell ^1 ([x + 1, \infty))} \le \frac1{\expo ^\nu - 1} \left(\frac{\expo \nu}{x}\right) ^x.
	\end{align*}
	Finally, setting $x = [t \vee \nu \expo ^2]$ allows us to deduce that $\left\lVert \bpb \right\rVert _{\ell ^1 ([x + 1, \infty))} \le C \expo ^{-t}$, whence
	\begin{align*}
		\left\lVert \bp (t) - \bpb \right\rVert _{\ell ^1} \le \begin{cases}
			C \langle t \rangle ^\frac12 \expo ^{-\nu t}, & \nu < 1 \\
			C \langle t \rangle ^{K + \frac12} \expo ^{-t}, & \nu \ge 1.
		\end{cases}.
	\end{align*}
	This completes the proof.
\end{proof}

\begin{figure}[htbp]
	\centering
	\includegraphics[width=.75\textwidth]{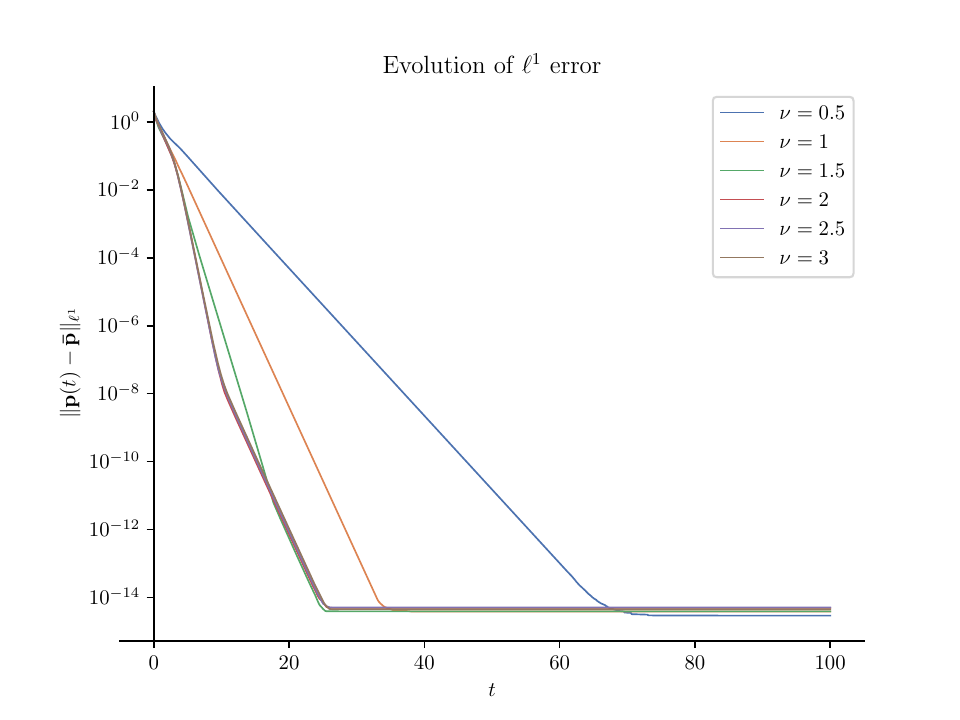}
	\caption{Evolution of the $\ell ^1$ error $\left\lVert \bp (t) - \bpb \right\rVert _{\ell ^1}$ over time for different values of $\nu$. It can be seen that larger values of $\nu$ (or $\mu$) leads to faster convergence, although such improvement in terms of the convergence rate saturates when $\nu$ becomes large enough.}
	\label{fig:ODE_ell2}
\end{figure}

To illustrate the quantitative convergence guarantee reported in Corollary \ref{cor:2} we plot the evolution of the $\ell^1$ error $\|\bp(t) - \bpb\|_{\ell^1}$ over time (see Figure \ref{fig:ODE_ell2}) with $\nu \in \{0.5, 1, 1.5, 2, 2.5, 3\}$, under the same set-up as used for Figure \ref{fig:ODEsimulation}. We observe the exponential decay of $\|\bp(t) - \bpb\|_{\ell^1}$ as predicted by Theorem \ref{thm:2}, although our analytical rate might be sub-optimal for $\nu \ge 1$. Meanwhile, we also emphasize that Figure \ref{fig:ODE_ell2} contains a part where the machine precision is reached and hence the numerical error can no longer decay further. Lastly, we observe that the numerical experiments displayed above for ``large'' $\nu$ seem to demonstrate two regimes in the convergence behavior: $\expo ^{-(2 \wedge \nu) t}$ in the first period, and $\expo ^{-(1 \wedge \nu) t}$ afterwards.

To understand the reason for the two-scale behavior, we examine the convergence for $\nu > 1$ in more detail, and provide a non-rigorous explanation here. Instead of convergence rate of $v$, we examine the convergence rate of $a$ directly. We do so by linearize the Volterra-type integral equation. Let $u (t) = \log v (t) - \nu$, and denote the negative exponential function as $\expo ^-{} (t) = \expo ^{-t}$, then \eqref{eq:auxillary} can be rewritten as one-sided convolution: 
\begin{align}
	\label{eqn:convolution-1}
	u = \expo ^-{} * a - \nu = \expo ^-{} * (a - \nu) - \nu \expo ^-{}.
\end{align}
Moreover, since $v = \expo ^\nu \expo ^u$, we rewrite \eqref{eq:diff-v-0} with $u$ as 
\begin{align*}
	\expo ^\nu (\expo ^{u (t)} - 1) &= e _2 (t) - \expo ^{-t} \varphi (\expo ^{\nu \expo ^{-t}}) + \mu \int _0 ^t \left(
		[v (s)] ^{\expo ^{-t + s}} - \expo ^{\nu \expo ^{-t + s}}
	\right) \expo ^{-t + s} \dd s \\
	&= e _2 (t) - \expo ^{-t} \varphi (\expo ^{\nu \expo ^{-t}}) + \mu \int _0 ^t \left(
		\expo ^{u (s) \expo ^{-t + s}} - 1
	\right) \expo ^{\nu \expo ^{-t + s}} \expo ^{-t + s} \dd s.
\end{align*}
Denote $k (t) = \mu \expo ^{\nu \expo ^{-t}} \expo ^{-2 t}$. We can write the above equation in convolution form:
\begin{align}
	\label{eqn:convolution-2}
	\expo ^\nu u = e _3 + k * u
\end{align}
with a remainder 
\begin{align*}
	e _3 (t) &:= e _2 (t) - \expo ^{-t} \varphi (\expo ^{\nu \expo ^{-t}}) - \expo ^\nu (\expo ^{u (t)} - 1 - u (t)) \\
	& \qquad + \mu \int _0 ^t \left(
		\expo ^{u (s) \expo ^{-t + s}} - 1 - u (s) \expo ^{-t + s}
	\right) \expo ^{\nu \expo ^{-t + s}} \expo ^{-t + s} \dd s.
\end{align*}
From Proposition \ref{prop:decay-v} we know $|u (t)| = \cO (\langle t \rangle ^K \expo ^{-t})$. Ignore the algebraic factor $\langle t \rangle ^K$ for now, and use $\varphi (\expo ^{\nu \expo ^{-t}}) = \varphi (1) + \cO (\expo ^{\nu \expo ^{-t}} - 1) = \mu + \mathcal O (\expo ^{-t})$, we know
\begin{align*}
	e _3 (t) = e _2 (t) - \mu \expo ^{-t} + \cO (\expo ^{-2t}).
\end{align*}
Combine \eqref{eqn:convolution-1} and \eqref{eqn:convolution-2}, we have
\begin{align}
	\notag
	\expo ^\nu \expo ^-{} * (a - \nu) - \nu \expo ^\nu \expo ^-{} &= e _3 + k * \expo ^-{} * (a - \nu) - k * \nu \expo ^-{} \\
	\label{eqn:convolution-3}
	\implies (\expo ^\nu \expo ^-{} - k * \expo ^-{}) * (a - \nu) &= e _3 + \nu \expo ^\nu \expo ^-{} - k * \nu \expo ^-{}.
\end{align}
We can compute one of the convolution explicitly:
\begin{align*}
	k * \expo ^-{} (t) &= \mu \int _0 ^t \expo ^{\nu \expo ^{-s}} \expo ^{-2 s} \expo ^{-t + s} \dd s \\
	&= \expo ^{-t} \left(
		\mu \int _0 ^\infty \expo ^{\nu \expo ^{-s}} \expo ^{-s} \dd s - \mu \int _t ^\infty \expo ^{\nu \expo ^{-s}} \expo ^{-s} \dd s
	\right) \\
	&= \expo ^{-t} \left(\expo ^\nu - \expo ^{-t} \varphi (\expo ^{\nu \expo ^{-t}})\right).
\end{align*}
Thus \eqref{eqn:convolution-3} becomes
\begin{align}
	\notag
	(\expo ^-{}) ^2 \varphi (\expo ^{\nu \expo ^-{}}) * (a - \nu) &= e _3 + \nu (\expo ^-{}) ^2 \varphi (\expo ^{\nu \expo ^-{}}) \\
	\label{eqn:convolution-4}
	&= e _2 - \mu \expo ^{-t} + \cO ((\expo ^{-}) ^2).
\end{align}
Recall that 
\begin{align*}
	e _2 (t) = 1 - \G (0, 1 - \expo ^{-t}) + \cO (\expo ^{-\nu t}),
\end{align*}
where 
\begin{align}
	\label{eqn:G-estimate}
	\G (0, 1 - \expo ^{-t}) &= \G (0, 1) - \expo ^{-t} \partial _z G (0, 1) + \cO (\expo ^{-2t}) = 1 - \mu \expo ^{-t} + \cO (\expo ^{-2t})
\end{align}
assuming finite second moment. 
Thus, we have a cancellation on the right-hand side of \eqref{eqn:convolution-4} and we are left with 
\begin{align}
	\label{eqn:convolution-5}
	(\expo ^-{}) ^2 \varphi (\expo ^{\nu \expo ^-{}}) * (a - \nu) (t) = \cO (\expo ^{-\nu t}) + \cO (\expo ^{-2t}).
\end{align}
The convolution kernel has decay rate $\expo ^{-2t} \varphi (\expo ^{\nu \expo ^{-t}}) = \cO (\expo ^{-2t})$. Therefore, provided that $a - \nu$ has an asymptotic leading term $C \expo ^{-\alpha t}$ for some $\alpha$, then $\alpha = 2 \wedge \nu$ (unless there exists a further cancellation). We thus conjure that the sharp decay rate should be $\expo ^{-(2 \wedge \nu) t}$ for $\mu > 1$, which is supported by the experiments in Figure \ref{fig:ODE_ell2}. 

Supopse that, due to simulation machine error, the first moment of the initial condition $\partial _z G (0, 1)$ is not $\mu$ but slightly off, then in \eqref{eqn:G-estimate} the $\expo ^{-t}$ term cannot perfectly cancel $\mu \expo ^{-t}$ in \eqref{eqn:convolution-4}, and the right-hand side of \eqref{eqn:convolution-5} would have a decay rate of $\expo ^{-t}$ instead of $\expo ^{-(2 \wedge \nu) t}$. We believe this is the reason why we observe a change in decay rate for $\nu > 1$ in Figure \ref{fig:ODE_ell2} beneath a certain scale.

\section{Conclusion}
\label{sec:sec5}

In this work, we studied the Fokker--Planck equation associated with the mean-field limit of the so-called dispersion process (on a complete graph with $N$ vertices) introduced and studied in a number of recent works \cite{cooper_dispersion_2018,de_dispersion_2023,frieze_note_2018,shang_longest_2020}. We prove quantitative global convergence results for the solution of the Fokker--Planck equation towards the equilibrium. The equilibrium and the obtained convergence rate differ in the underpopulated regime ($\mu < 1$) and the overpopulated regime ($\mu > 1$). 

This work also leaves some important follow-up problems which deserve separate treatments and attentions on their own. For instance, can we design a natural Lyapunov functional associated to the solution of the infinite dimensional nonlinear ODE system \eqref{eq:law_limit} when $\mu > 1$? Is it possible to improve the various decay rates reported in Theorem \ref{thm:main}? We believe that answers to these questions will enhance our understanding on the role played by the model parameter $\mu$ and contribute to the ever-growling literature on the large time analysis of mean-field dynamical systems arising from social-economic, life, and natural sciences \cite{cao_fractal_2024,cao_iterative_2024,jabin_review_2014,pareschi_interacting_2013}. 

\begin{appendix}

\section{Uniqueness of solution to the mean-field ODE system}
\label{app:uniqueness}
\newcommand{\bpi}{\boldsymbol \pi}
\newcommand{\bga}{\boldsymbol \gamma}

We provide a proof for the uniqueness of solution to the mean-field dynamical system \eqref{eq:law_limit} under the framework of Wasserstein distances. The proof is inspired by the coupling method (see \cite[section 7.5]{villani_topics_2003}). To illustrate the main ideas, let us recast the equation into the most general form. Let $\bp$, $\tilde \bp: [0, T] \to \mathbb R ^\mathbb N$ be two solutions to the linear Fokker--Planck equations with different drifts
\begin{align}
	\label{eqn:fokker-planck-1}
	\bp' (t) &= \mathcal D _+ [\mathbf b (t) \bp (t)] - \mathcal D _- [\mathbf a (t) \bp (t)], \\ 
	\label{eqn:fokker-planck-2}
	\tilde \bp ' (t) &= \mathcal D _+ [\tilde {\mathbf b} (t) \tilde \bp (t)] - \mathcal D _- [\tilde {\mathbf a} (t) \tilde \bp (t)].
\end{align}
where $\mathbf a, \mathbf b, \tilde{\mathbf a}, \tilde{\mathbf b}: [0, T] \to \mathbb R ^{\mathbb N}$ are upward/downward drift, and the product is in the pointwise sense. A transport plan $\bpi = \{\pi _{m, n}\} _{m, n = 0} ^\infty$ between $\bq$ and $\tilde \bq$ is a probability measure on $\mathbb N \times \mathbb N$ satisfying 
\begin{align*}
	\pi _{m, n} &\ge 0, &
	\sum _{n = 0} ^\infty \pi _{m, n} &= q _m, & 
	\sum _{m = 0} ^\infty \pi _{m, n} &= \tilde q _n, & 
	\sum _{m, n = 0} ^\infty \pi _{m, n} &= 1.
\end{align*}
The last condition is redundant because $\bq$ and $\tilde \bq$ are both probability measures.
The set of transport plans between $\bq$ and $\tilde \bq$ is denoted as $\Pi (\bq, \tilde \bq)$. The 1-Wasserstein distance is defined as 
\begin{align*}
	\mathcal W (\bq, \tilde \bq) = \inf _{\bpi \in \Pi (\bq, \tilde \bq)} \sum _{m, n = 0} ^\infty \lvert m - n \rvert \pi _{m, n}.
\end{align*}

Let $\bga _0 \in \Pi (\bp (0), \tilde \bp (0))$ be a transport plan between the initial conditions. We want to design a transport plan $\bga (t) \in \Pi (\bp (t), \tilde \bp (t))$. To do this, we first need to generalize the forward/backward difference operator to $\mathbb N ^2$:
\begin{align*}
	\mathcal D _{++} [\bpi] _{m, n} &= \pi _{m + 1, n + 1} - \pi _{m, n}, &
	\mathcal D _{--} [\bpi] _{m, n} &= \pi _{m, n} - \pi _{m - 1, n - 1}, \\
	\mathcal D _{+\circ} [\bpi] _{m, n} &= \pi _{m + 1, n} - \pi _{m, n}, &
	\mathcal D _{-\circ} [\bpi] _{m, n} &= \pi _{m, n} - \pi _{m - 1, n}, \\
	\mathcal D _{\circ+} [\bpi] _{m, n} &= \pi _{m, n + 1} - \pi _{m, n}, &
	\mathcal D _{\circ-} [\bpi] _{m, n} &= \pi _{m, n} - \pi _{m, n - 1}.
\end{align*}
We define the drifts $\mathbf A ^{--}, \mathbf A ^{-\circ}, \mathbf A ^{\circ-}$ and $\mathbf B ^{++}, \mathbf B ^{+\circ}, \mathbf B ^{\circ+}: [0, T] \to \mathbb R ^{\mathbb N \times \mathbb N}$ by
\begin{align*}
	A ^{--} _{m, n} (t) &= a _m (t) \wedge a _n (t), &
	B ^{++} _{m, n} (t) &= b _m (t) \wedge b _n(t), \\
	A ^{-\circ} _{m, n} (t) &= [a _m (t) - a _n (t)] _+, &
	B ^{+\circ} _{m, n} (t) &= [b _m (t) - b _n(t)] _+, \\
	A ^{\circ-} _{m, n} (t) &= [a _n (t) - a _m (t)] _+, &
	B ^{\circ+} _{m, n} (t) &= [b _n (t) - b _m (t)] _+.
\end{align*}
Here $\mathbf a = \{a _m\} _{m = 0} ^\infty$, $\mathbf b = \{b _m\} _{m = 0} ^\infty$, $\mathbf A ^{--} = \{A ^{--} _{m, n}\} _{m, n = 0} ^\infty$, etc., and $(x) _+ = \max\{x, 0\}$ denotes the positive part.
Define $\bga: [0, T] \to \mathbb R ^{\mathbb N \times \mathbb N}$ to be the solution to the following Fokker--Planck equation in $\mathbb N ^2$ with initial condition $\bga (0) = \bga _0$:
\begin{align*}
	\bga' (t) &= \mathcal D _{++} [\mathbf B ^{++} (t) \bga (t)] + \mathcal D _{+\circ} [\mathbf B ^{+\circ} (t) \bga (t)] + \mathcal D _{\circ+} [\mathbf B ^{\circ+} (t) \bga (t)] \\
	&\qquad - \mathcal D _{--} [\mathbf A ^{--} (t) \bga (t)] - \mathcal D _{-\circ} [\mathbf A ^{-\circ} (t) \bga (t)] - \mathcal D _{\circ-} [\mathbf A ^{\circ-} (t) \bga (t)].
\end{align*}
Same as \eqref{eqn:fokker-planck-1} and \eqref{eqn:fokker-planck-2}, the drifts are passive rather than active, so it is indeed a well-posed linear Fokker--Planck equation. To see that the solution is a transport plan, we first examine the forward drift of the first marginal. From the definition of $\mathcal D _{\circ -}$ and $\mathcal D _{- \circ}$, we know that for any $\mathbf C \in \mathbb R ^{\mathbb N \times \mathbb N}$ the following identities hold:
\begin{align*}
	\sum _{n = 0} ^\infty \mathcal D _{\circ-} [\mathbf C] _{m, n} &= \sum _{n = 0} ^\infty C _{m, n} - C _{m, n - 1} = 0, \\
	\sum _{n = 0} ^\infty \mathcal D _{-\circ} [\mathbf C ] _{m, n} &= \sum _{n = 0} ^\infty C _{m, n} - C _{m - 1, n} = \sum _{n = 0} ^\infty C _{m, n} - C _{m - 1, n - 1} = \sum _{n = 0} ^\infty \mathcal D _{--} [\mathbf C] _{m, n}.
\end{align*}
Therefore,  
\begin{align*}
	& \sum _{n = 0} ^\infty \mathcal D _{--} [\mathbf A ^{--} (t) \bpi] _{m, n} + \mathcal D _{-\circ} [\mathbf A ^{-\circ} (t) \bpi] _{m, n} + \mathcal D _{\circ-} [\mathbf A ^{\circ-} (t) \bga (t)] _{m, n} \\
	& \qquad = \sum _{n = 0} ^\infty \mathcal D _{--} [\mathbf A ^{--} (t) \bpi] _{m, n} + \mathcal D _{--} [\mathbf A ^{-\circ} (t) \bpi] _{m, n} + 0\\
	& \qquad = \sum _{n = 0} ^\infty \mathcal D _{--} [(\mathbf A ^{--} (t) + \mathbf A ^{-\circ} (t))\bpi] _{m, n} \\ 
	& \qquad = \sum _{n = 0} ^\infty a _m (t) \pi _{m, n} - a _{m - 1} (t) \pi _{m - 1, n - 1} \\
	&\qquad = \mathcal D _- [\mathbf a (t) \bpi _1] _m.
\end{align*}
Here $\bpi _1$ is the first marginal of $\bpi$. By a similar computation to the backward drift, we conclude the first marginal $\bga _1$ of $\bga$ solves a linear Fokker--Planck equation 
\begin{align*}
	\bga' _1 (t) &= \mathcal D _+ [\mathbf b (t) \bga _1 (t)] - \mathcal D _- [\mathbf a (t) \bga _1 (t)].
\end{align*}
From \eqref{eqn:fokker-planck-1} we know $\bp$ and $\bga _1$ solve the same linear equation \eqref{eqn:fokker-planck-1}. They share then same initial condition, so by the uniqueness of linear Fokker--Planck equation we know $\bga _1 (t) = \bp (t)$ for all $t \in [0, T]$. Similarly the second marginal $\bga _2 (t) = \tilde \bp (t)$, and hence $\bga (t) \in \Pi (\bp (t), \tilde \bp (t))$. 

With this transport plan, we can estimate the Wasserstein distance
\begin{align*}
	\mathcal W (\bp (t), \tilde \bp (t)) \le D (t) \coloneqq \sum _{m, n = 0} ^\infty |m - n| \gamma _{m, n} (t).
\end{align*}
Take derivative, we know 
\begin{align*}
	D' (t) = \sum _{m, n = 0} ^\infty |m - n| \bigg(
		&\mathcal D _{++} [\mathbf B ^{++} (t) \bga (t)] + \mathcal D _{+\circ} [\mathbf B ^{+\circ} (t) \bga (t)] + \mathcal D _{\circ+} [\mathbf B ^{\circ+} (t) \bga (t)] \\
		- &\mathcal D _{--} [\mathbf A ^{--} (t) \bga (t)] - \mathcal D _{-\circ} [\mathbf A ^{-\circ} (t) \bga (t)] - \mathcal D _{\circ-} [\mathbf A ^{\circ-} (t) \bga (t)]
	\bigg).
\end{align*}
From the definition of $\mathcal D _{++}$ and $\mathcal D _{--}$, immediately we have
\begin{align*}
	\sum _{m, n = 0} ^\infty |m - n| 
		\mathcal D _{++} [\mathbf B ^{++} (t) \bga (t)] = \sum _{m, n = 0} ^\infty |m - n|  \mathcal D _{--} [\mathbf A ^{--} (t) \bga (t)] = 0.
\end{align*}
Moreover, using the discrete integration by parts, we have
\begin{align*}
	\sum _{m, n = 0} ^\infty |m - n| \mathcal D _{-\circ} [\mathbf A ^{-\circ} (t) \bga (t)] _{m, n} &= \sum _{m, n = 0} ^\infty -\mathcal D _{+\circ} [|m - n|] A ^{-\circ} _{m, n} (t) \gamma _{m, n} (t) \\
	&\le \sum _{m, n = 0} ^\infty A ^{-\circ} _{m, n} (t) \gamma _{m, n} (t).
\end{align*}
Similar computations yield
\begin{align}
	\notag
	D' (t) &\le \sum _{m, n = 0} ^\infty \left(
		A ^{-\circ} _{m, n} (t) + A ^{\circ-} _{m, n} (t) + B ^{+\circ} _{m, n} (t) + B ^{\circ+} _{m, n} (t)
	\right) \gamma _{m, n} (t) \\
	\label{eq:diff-ineq-2}
	&= \sum _{m, n = 0} ^\infty \left(
		|a _m (t) - a _n (t)| + 
		|b _m (t) - b _n (t)|
	\right) \gamma _{m, n} (t).
\end{align}
If the right-hand side can be controlled by $D (t)$ then we would have a differential inequality.

Now, let $\bp$ and $\tilde \bp$ be two solutions to \eqref{eq:law_limit}. Then they solve \eqref{eqn:fokker-planck-1} and \eqref{eqn:fokker-planck-2} with the drifts
\begin{align*}
	a _m (t) &= \sum _{n = 2} ^\infty n p _n (t) = \mu - p _1 (t), & \tilde a _m (t) &= \mu - \tilde p _1 (t), & b _n (t) = \tilde b _n (t) &= n \mathbf1 _{\{n \ge 2\}}.
\end{align*}
The backward drift part in \eqref{eq:diff-ineq-2} can be controlled by 
\begin{align*}
	\sum _{m, n = 0} ^\infty |b _m (t) - b _n (t)| \gamma _{m, n} (t) \le \sum _{m, n = 0} ^\infty |m - n| \gamma _{m, n} (t) = D (t).
\end{align*}
As for the forward drift in \eqref{eq:diff-ineq-2}, it is independent of $m, n$, so 
\begin{align*}
	\sum _{m, n = 0} ^\infty |a _m (t) - a _n (t)| \gamma _{m, n} (t) &= \sum _{m, n = 0} ^\infty |\mu - p _1 (t) - \mu + \tilde p _1 (t)| \gamma _{m, n} (t) \\
	&= |p _1 (t) - \tilde p _1 (t)|.
\end{align*}
It remains to show that $|p _1 (t) - \tilde p _1 (t)| \le D (t)$, which is elementary:
\begin{align*}
	D (t) \ge \sum _{m \neq n} \gamma _{m, n} (t) &\ge \sum _{m = 0} ^\infty \gamma _{m, 1} (t) + \sum _{n = 0} ^\infty \gamma _{1, n} (t) - 2 \gamma _{1, 1} (t) \\
	&= p _1 (t) + \tilde p _1 (t) - 2 \gamma _{1, 1} (t).
\end{align*}
Because $\gamma _{1, 1} (t) \le p _1 (t) \wedge \tilde p _1 (t)$, we have $|p _1 (t) - \tilde p _1 (t)| \le D (t)$.

To summarize, we have shown that 
\begin{align*}
	D' (t) \le 2 D (t). 
\end{align*}
Gronwall's inequality provides an upper bound on the Wasserstein distance between two solutions: 
\begin{align*}
	\mathcal W (\bp (t), \tilde \bp (t)) \le D (t) \le D (0) \expo ^{2 t}.
\end{align*}
Taking infimum of all transport plans at time zero, we have 
\begin{align*}
	\mathcal W (\bp (t), \tilde \bp (t)) \le \mathcal W (\bp (0), \tilde \bp (0)) \expo ^{2 t}.
\end{align*}
If $\bp (0) = \tilde \bp (0)$, then $\mathcal W (\bp (t), \tilde \bp (t)) = 0$, which implies $\bp (t) = \tilde \bp (t)$ for all $t > 0$.

\end{appendix}

\end{document}